\newtheorem{theorem}{Theorem}[section]
\newtheorem{corollary}{Corollary}[section]
\newtheorem{example}{Example}[section]
\newtheorem{remark}{Remark}[section]
\newtheorem{lemma}{Lemma}[section]
\newtheorem{algorithm}{Algorithm}[section]
\newtheorem{proposition}{Proposition}[section]
\providecommand{\U}[1]{\protect\rule{.1in}{.1in}}
\newenvironment{proof}[1][Proof]{\noindent\textbf{#1.} }{\ \rule{0.5em}{0.5em}}
\newcommand{\DPurple}[1]{{\color[rgb]{0.6, 0.0, 0.8}{#1}}}
\newcommand{\mnumberedcode}[4]
{
	\lstset
	{
		mathescape = true,
		escapeinside={(*@}{@*)},
		firstnumber = {#1},
		language = Matlab,
		showstringspaces = false,
		numberblanklines = false,
		countblanklines = false,
		keywordstyle = {\color[rgb]{0.5, 0.0, 0.0}},
		commentstyle = {\color[rgb]{0.0, 0.5, 0.0}},
		basicstyle = \ttfamily\scriptsize,
		numbers = left, numberstyle = \tiny, numbersep = 5 pt,
		classoffset = 0,
		morekeywords = {syms, matlabFunction, ones, cell},
		classoffset = 1,
		morekeywords = {M, Doolittle, dim, phi, b, v, T, rho, L, U, LI, UI, mu, lambda, W_reversed_beta, beta, W, c, gamma, B_c, arc, hodograph, tangent, p, psi, omega},
		keywordstyle = {\color[rgb]{0.0, 0.0, 0.70}},
		classoffset = 0
	}
	\lstinputlisting[label = #3, caption = #4]{#2}
}
\journalname{}
\begin{document}
	
	\LRCornerWallPaper{0.225}{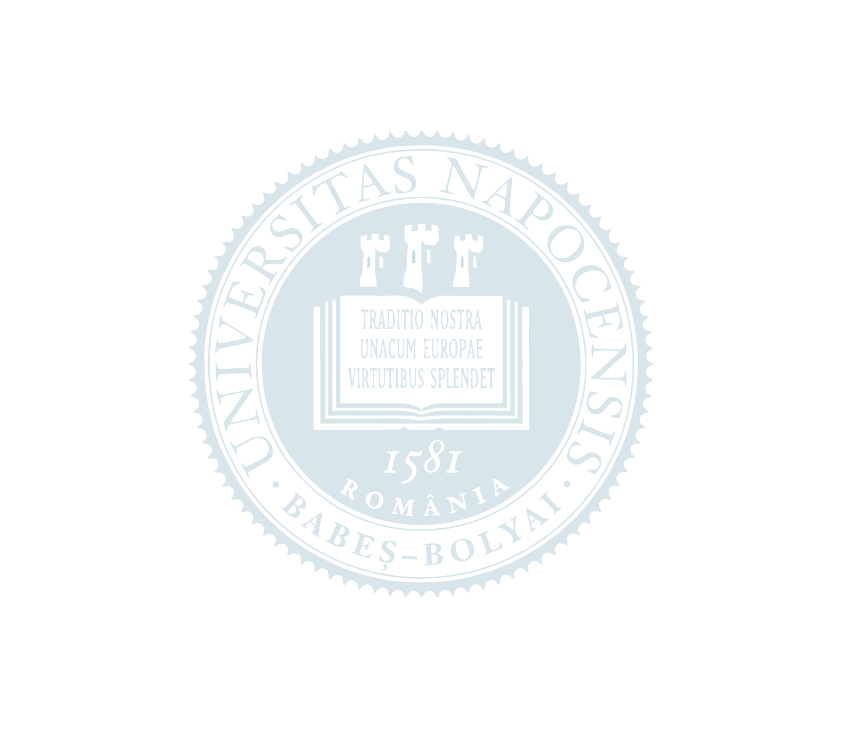}
	
\setcounter{tocdepth}{3}
\allowdisplaybreaks
\renewcommand{\thelstlisting}{\thesection.\arabic{lstlisting}}

\title{Control point based exact description of curves and surfaces\\in extended Chebyshev spaces}

\author{\'{A}goston R\'{o}th}

\institute{
\'A. R\'oth \at Department of Mathematics and Computer Science, Babe\c{s}--Bolyai University, RO--400084 Cluj-Napoca, Romania \\
Tel.: +40-264-405300\\
Fax:  +40-264-591906\\
\email{agoston\_roth@yahoo.com}
}

\date{Submitted to arXiv on \today}

\maketitle

\begin{abstract}
Extended Chebyshev spaces that also comprise the constants represent large families of functions that can be used in real-life modeling or engineering applications that also involve important (e.g.\ transcendental) integral or rational curves and surfaces. Concerning computer aided geometric design, the unique normalized B-bases of such vector spaces ensure optimal shape preserving properties, important evaluation or subdivision algorithms and useful shape parameters. Therefore, we propose global explicit formulas for the entries of those transformation matrices that map these normalized B-bases to the traditional (or ordinary) bases of the underlying vector spaces. Then, we also describe general and ready to use control point configurations for the exact representation of those traditional integral parametric curves and (hybrid) surfaces that are specified by coordinate functions given as (products of separable) linear combinations of ordinary basis functions. The obtained results are also extended to the control point and weight based exact description of the rational counterpart of these integral parametric curves and surfaces. The universal applicability of our methods is presented through polynomial, trigonometric, hyperbolic or mixed extended Chebyshev vector spaces.

\keywords{Extended Chebyshev vector spaces \and Curves and surfaces \and Normalized B-basis functions \and Basis transformation \and Control point based exact description}
\subclass{65D17 \and 68U07}
\end{abstract}

\section{Introduction\label{sec:introduction}}

Normalized B-bases (a comprehensive study of which can be found in
\cite{Pena1999} and references therein) are normalized totally positive bases
that imply optimal shape preserving properties for the representation of
curves described as convex combinations of control points and basis functions.
Similarly to the classical Bernstein polynomials of degree $n\in\mathbb{N}$ -- that in fact form the normalized B-basis of the vector space of polynomials of degree at most $n$ on the interval $\left[  0,1\right]  $, cf.
\cite{Carnicer1993} -- normalized B-bases provide shape preserving properties
like closure for the affine transformations of the control points, convex
hull, variation diminishing (which also implies the preservation of convexity
of plane control polygons), endpoint interpolation, monotonicity preserving,
hodograph and length diminishing, and a recursive corner cutting algorithm
(also called B-algorithm) that is the analogue of the de Casteljau algorithm
of B\'{e}zier curves. Among all normalized totally positive bases of a given
vector space of functions the normalized B-basis is the least variation
diminishing and the shape of the generated curve more mimics that of its control
polygon. Important curve design algorithms like evaluation, subdivision,
degree elevation or knot insertion are in fact corner cutting algorithms that
can be treated in a unified way by means of B-algorithms induced by B-bases.

Curve and surface modeling tools based on non-polynomial normalized B-bases also ensure further advantages like: possible shape or design parameters; singularity free exact parametrization (e.g.\ parametrization of
conic sections may correspond to natural arc-length parametrization); higher or even infinite order of precision concerning (partial) derivatives; ordinary (i.e., traditionally parametrized) integral curves and surfaces can exactly be described by means of control points without any additional weights (the calculation of which, apart of some simple cases, is cumbersome for the designer); important transcendental curves and surfaces which are of interest in real-life applications can also be exactly represented (the standard rational B\'{e}zier or NURBS models cannot encompass these
geometric objects). Moreover, concerning condition numbers and stability, a normalized B-basis is the unique normalized totally positive basis that is optimally stable among all non-negative bases of a given vector space of functions, cf. \cite[Corollary 3.4, p. 89]{Pena1999}. These advantageous properties make normalized B-bases ideal blending function
system candidates for curve (and surface) modeling.

Besides their interest in the classical contexts of CAGD and approximation theory, normalized B-bases and their spline counterparts have also been used in isogeometric analysis recently (consider e.g.\ \cite{ManniPelosiSampoli2011} and references therein). Compared with classical final element methods, isogeometric analysis provides several advantages when one describes the geometry by generalized B-splines and invokes an isoparametric approach in order to approximate the unknown solutions of differential equations (e.g.\ of Poisson type problems) or Dirichlet boundary conditions by the same type of functions.

Let $n\geq1$ be a fixed integer and consider the extended Chebyshev (EC) system%
\begin{equation}
\mathcal{F}_{n}^{\alpha,\beta}=\left\{  \varphi
_{n,i}\left(  u\right)  :u\in\left[  \alpha,\beta\right]  \right\}  _{i=0}%
^{n},~\varphi_{n,0}	\equiv 1,~-\infty<\alpha<\beta<\infty\label{eq:ordinary_basis}%
\end{equation}
of basis functions in $C^{n}\left(  \left[  \alpha,\beta\right]  \right)  $,
i.e., by definition \cite{KarlinStudden1966}, for any integer $0\leq r\leq n$, any strictly increasing
sequence of knot values $\alpha\leq u_{0}<u_{1}<\ldots<u_{r}\leq\beta$, any
positive integers (or multiplicities) $\left\{  m_{k}\right\}  _{k=0}^{r}$
such that $\sum_{k=0}^{r}m_{k}=n+1$, and any real numbers $\left\{
\xi_{k,\ell}\right\}  _{k=0,~\ell=0}^{r,~m_{k}-1}$ there always exists a
unique function
\begin{equation}
f:=\sum_{i=0}^{n}\lambda_{n,i}\varphi_{n,i}\in\mathbb{S}_{n}^{\alpha,\beta}:=\left\langle\mathcal{F}_{n}^{\alpha,\beta
}\right\rangle:=\operatorname{span}\mathcal{F}_{n}^{\alpha,\beta},~\lambda_{n,i}\in\mathbb{R},~i=0,1,\ldots,n \label{eq:unique_solution}%
\end{equation}
that satisfies the conditions of the Hermite interpolation problem%
\begin{equation}
f^{\left(  \ell\right)  }\left(  u_{k}\right)  =\xi_{k,\ell},~\ell
=0,1,\ldots,m_{k}-1,~k=0,1,\ldots,r. \label{eq:Hermite_interpolation_problem}%
\end{equation}
In what follows, we assume that the sign-regular determinant of the coefficient
matrix of the linear system (\ref{eq:Hermite_interpolation_problem}) of
equations is strictly positive for any permissible parameter settings introduced above.
Under these circumstances, the vector space $\mathbb{S}_{n}^{\alpha,\beta}$ of functions is
called an EC space of dimension $n+1$. In terms of zeros, this
definition means that any non-zero element of $\mathbb{S}_{n}^{\alpha,\beta}$
vanishes at most $n$ times in the interval $\left[  \alpha,\beta\right]  $. Such spaces and their corresponding spline counterparts have been widely studied, consider e.g.\ articles \cite{Lyche1985,CarnicerPena1994,Mazure1999,Mazure2001,MainarPenaSanchez2001,LuWangYang2002,CarnicerMainarPena2004,MainarPena2004,CostantiniLycheManni2005,CarnicerMainarPena2007,MainarPena2010} and many other references therein.

Hereafter we will also refer to $\mathcal{F}_{n}^{\alpha,\beta}$ as the
ordinary basis of $\mathbb{S}_{n}^{\alpha,\beta}$. Using \cite[Theorem
5.1]{CarnicerPena1995} it follows that the vector space $\mathbb{S}%
_{n}^{\alpha,\beta}$ also has a strictly totally positive basis, i.e., a basis
such that all minors of all its collocation matrices are strictly positive.
Since the constant function $1\equiv\varphi_{n,0}\in\mathbb{S}_{n}%
^{\alpha,\beta}$, the aforementioned strictly positive basis is normalizable,
therefore the vector space $\mathbb{S}_{n}^{\alpha,\beta}$ also has a unique
non-negative normalized B-basis%
\begin{equation}
\mathcal{B}_{n}^{\alpha,\beta}=\left\{  b_{n,i}\left(  u\right)  :u\in\left[
\alpha,\beta\right]  \right\}  _{i=0}^{n} \label{eq:B-basis}%
\end{equation}
that besides the identity%
\begin{equation}
\sum_{i=0}^{n}b_{n,i}\left(  u\right)  \equiv1,~\forall u\in\left[
\alpha,\beta\right]  \label{eq:partition_of_unity}%
\end{equation}
also fulfills the properties%
\begin{align}
b_{n,0}\left(  \alpha\right)   &  =b_{n,n}\left(  \beta\right)
=1,\label{eq:endpoint_interpolation}\\
b_{n,i}^{\left(  j\right)  }\left(  \alpha\right)   &  =0,~j=0,\ldots
,i-1,~b_{n,i}^{\left(  i\right)  }\left(  \alpha\right)
>0,\label{eq:Hermite_conditions_0}\\
b_{n,i}^{\left(  j\right)  }\left(  \beta\right)   &  =0,~j=0,1,\ldots
,n-1-i,~\left(  -1\right)  ^{n-i}b_{n,i}^{\left(  n-i\right)  }\left(
\beta\right)  >0 \label{eq:Hermite_conditions_alpha}%
\end{align}
conform \cite[Theorem 5.1]{CarnicerPena1995} and \cite[Equation (3.6)]%
{Mazure1999}.

Using the normalized B-basis $\mathcal{B}_{n}^{\alpha,\beta}$ of
$\mathbb{S}_{n}^{\alpha,\beta}$, one of our objectives is to provide explicit closed formulas for the control point based exact description of integral curves that are
specified with coordinate functions given in traditional parametric form in
the ordinary basis $\mathcal{F}_{n}^{\alpha,\beta}$ of the same vector space. Based on homogeneous coordinates and central projection, we also propose an
algorithm for the control point (and weight) based exact description of the
rational counterpart of these ordinary integral curves. Results
will also be extended to the exact representation of families of (hybrid)
integral and rational surfaces that are exclusively given in each
of their variables by using ordinary EC basis
functions of the type (\ref{eq:ordinary_basis}).

To the best of the author's knowledge, the coefficient based exact representation of ordinary (rational) functions, curves and surfaces by means of the (rational or spline counterpart) of the normalized B-basis of an arbitrary EC space (that also comprises the constant functions) was not considered in such a general unified context.  Without providing an exhaustive survey, so far the presented problem appears in the literature for example
in case of conversion algorithms related to Bernstein polynomials, monomials and the classical families of orthogonal Jacobi, Gegenbauer, Legendre, Chebyshev, Laguerre and Hermite polynomials \cite{CargoShisha1966,BarrioPena2004}
in special lower dimensional vector spaces (e.g.\ in \cite{Zhang1996,MainarPenaSanchez2001,CarnicerMainarPena2003,CarnicerMainarPena2006,RomaniSainiAlbrecht2014});
in case of conical and helical arcs, of catenaries, of patches on all types of quadrics and of helicoidal surfaces (e.g.\ in \cite{PottmannWagner1994,LuWangYang2002});
of certain (rational) trigonometric curves of arbitrarily finite order like epi- and hypotrochoidal arcs \cite{Sanchez1999}, or segments of offset-rational sinusoidal spirals, arachnidas and epi spirals \cite{Sanchez2002};
or more recently, in case of arbitrary trigonometric and hyperbolic (rational) polynomials, curves, (hybrid) surfaces and volumes of finite order \cite{Roth2015}.

The rest of the paper is organized as follows. Section
\ref{sec:main_results_and_remarks} lists our main results, namely it describes
closed formulas for the basis transformation that maps the normalized B-basis
$\mathcal{B}_{n}^{\alpha,\beta}$ of the vector space $\mathbb{S}_{n}%
^{\alpha,\beta}$ to its ordinary basis $\mathcal{F}_{n}^{\alpha,\beta}$ and also specifies control point configurations for the exact
representation of certain large classes of integral and rational curves and
surfaces that are specified in traditional parametric form by
means of ordinary bases like $\mathcal{F}_{n}^{\alpha,\beta}$. Although the presented results are mainly of theoretical interest, Section \ref{sec:main_results_and_remarks} also studies the computational complexity of the proposed basis conversion formulas and -- compared with alternative cubic time numerical methods like curve interpolation or least square approximation -- points out that these can more efficiently be implemented up to $n=15$. Section
\ref{sec:examples} emphasizes the universal applicability of the general basis
transformation described in Section \ref{sec:main_results_and_remarks} with
examples that can be compared to presumably already existing results in the
literature. This section considers EC vector spaces of
functions that may be important in computer aided geometric design, in
engineering, in (projective) geometry, in (numerical) analysis or in
approximation theory. The proofs of all theoretical results stated in Sections
\ref{sec:main_results_and_remarks}--\ref{sec:examples} can be found in Section
\ref{sec:proofs}. In the end, Section \ref{sec:final_remarks} closes the paper
with our final remarks. Based on the general context of the manuscript, Appendix \ref{sec:Bernstein_to_monomials} recalls the classic transformation matrix that maps the Bernstein polynomials of degree $n$ to the corresponding ordinary power basis of the vector space of traditional polynomials, while Appendix \ref{sec:implementation_details} provides implementation details by means of a simple Matlab example.

\section{Main results and remarks\label{sec:main_results_and_remarks}}

At first, we provide explicit formulas for the transformation of the
normalized B-basis $\mathcal{B}_{n}^{\alpha,\beta}$ of the vector space
$\mathbb{S}_{n}^{\alpha,\beta}$ to its ordinary basis $\mathcal{F}_{n}^{\alpha,\beta}$.

\begin{theorem}
	[General basis transformation]\label{thm:basis_transformation}The matrix form of the
	linear transformation that maps the normalized B-basis $\mathcal{B}%
	_{n}^{\alpha,\beta}$ to the ordinary basis $\mathcal{F}_{n}^{\alpha,\beta}$ is%
	\begin{equation}
	\left[
	\begin{array}
	[c]{c}%
	\varphi_{n,i}\left(  u\right)
	\end{array}
	\right]  _{i=0}^{n}=\left[  t_{i,j}^{n}\right]  _{i=0,~j=0}^{n,~n}\cdot\left[
	\begin{array}
	[c]{c}%
	b_{n,i}\left(  u\right)
	\end{array}
	\right]  _{i=0}^{n},~\forall u\in\left[  \alpha,\beta\right]  ,
	\label{eq:basis_transformation}%
	\end{equation}
	where $t_{0,j}^{n}=1,~j=0,1,\ldots,n$ and $t_{i,0}^{n}= \varphi_{n,i}\left(  \alpha\right),~
	t_{i,n}^{n}=  ~\varphi_{n,i}\left(  \beta\right)  ,~i=0,1,\ldots
	,n$, while%
	\begin{align}
	t_{i,j}^{n}=  &  ~\varphi_{n,i}\left(  \alpha\right)  -\frac{1}{b_{n,j}%
		^{\left(  j\right)  }\left(  \alpha\right)  }\cdot\left.  \sum_{r=1}%
	^{j-1}\frac{\varphi_{n,i}^{\left(  r\right)  }\left(  \alpha\right)  }%
	{b_{n,r}^{\left(  r\right)  }\left(  \alpha\right)  }\right(  b_{n,r}^{\left(
		j\right)  }\left(  \alpha\right)  +\label{eq:first_half}\\
	&  ~\left.  +\sum_{\ell=1}^{j-r-1}\left(  -1\right)  ^{\ell}\sum
	_{r<k_{1}<k_{2}<\ldots<k_{\ell}<j}\frac{b_{n,r}^{\left(  k_{1}\right)
		}\left(  \alpha\right)  b_{n,k_{1}}^{\left(  k_{2}\right)  }\left(
		\alpha\right)  b_{n,k_{2}}^{\left(  k_{3}\right)  }\left(  \alpha\right)
		\ldots b_{n,k_{\ell-1}}^{\left(  k_{\ell}\right)  }\left(  \alpha\right)
		b_{n,k_{\ell}}^{\left(  j\right)  }\left(  \alpha\right)  }{b_{n,k_{1}%
		}^{\left(  k_{1}\right)  }\left(  \alpha\right)  b_{n,k_{2}}^{\left(
		k_{2}\right)  }\left(  \alpha\right)  \ldots b_{n,k_{\ell}}^{\left(  k_{\ell
		}\right)  }\left(  \alpha\right)  }\right)  +\frac{\varphi_{n,i}^{\left(
		j\right)  }\left(  \alpha\right)  }{b_{n,j}^{\left(  j\right)  }\left(
	\alpha\right)  },\nonumber\\ i=&~1,2,\ldots,n,~j=1,2,\ldots,\left\lfloor \frac{n}{2}\right\rfloor
,\nonumber\\
& \nonumber\\
t_{i,n-j}^{n}=  &  ~\varphi_{n,i}\left(  \beta\right)  -\frac{1}%
{b_{n,n-j}^{\left(  j\right)  }\left(  \beta\right)  }\cdot\left.  \sum
_{r=1}^{j-1}\frac{\varphi_{n,i}^{\left(  r\right)  }\left(  \beta\right)
}{b_{n,n-r}^{\left(  r\right)  }\left(  \beta\right)  }\right(  b_{n,n-r}%
^{\left(  j\right)  }\left(  \beta\right)  +\label{eq:last_half}\\
&  ~\left.  +\sum_{\ell=1}^{j-r-1}\left(  -1\right)  ^{\ell}\sum
_{r<k_{1}<k_{2}<\ldots<k_{\ell}<j}\frac{b_{n,n-r}^{\left(  k_{1}\right)
	}\left(  \beta\right)  b_{n,n-k_{1}}^{\left(  k_{2}\right)  }\left(
	\beta\right)  b_{n,n-k_{2}}^{\left(  k_{3}\right)  }\left(  \beta\right)
	\ldots b_{n,n-k_{\ell-1}}^{\left(  k_{\ell}\right)  }\left(  \beta\right)
	b_{n,n-k_{\ell}}^{\left(  j\right)  }\left(  \beta\right)  }{b_{n,n-k_{1}%
	}^{\left(  k_{1}\right)  }\left(  \beta\right)  b_{n,n-k_{2}}^{\left(
	k_{2}\right)  }\left(  \beta\right)  \ldots b_{n,n-k_{\ell}}^{\left(  k_{\ell
	}\right)  }\left(  \beta\right)  }\right) \nonumber\\
&  ~+\frac{\varphi_{n,i}^{\left(  j\right)  }\left(  \beta\right)  }%
{b_{n,n-j}^{\left(  j\right)  }\left(  \beta\right)  },\,~i=  ~1,2,\ldots,n,~j=1,2,\ldots,\left\lfloor \frac{n}{2}\right\rfloor
.\nonumber
\end{align}
\end{theorem}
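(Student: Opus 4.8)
The plan is to exploit only two structural facts about the normalized B-basis: the partition of unity (\ref{eq:partition_of_unity}) together with its higher-order derivatives, and the Hermite-type endpoint conditions (\ref{eq:endpoint_interpolation})--(\ref{eq:Hermite_conditions_alpha}). Since $\mathcal{B}_n^{\alpha,\beta}$ and $\mathcal{F}_n^{\alpha,\beta}$ are both bases of $\mathbb{S}_n^{\alpha,\beta}$, the transformation matrix in (\ref{eq:basis_transformation}) exists and is unique, so it suffices to verify that the proposed entries satisfy the linear identities obtained by differentiating (\ref{eq:basis_transformation}) and evaluating at the endpoints. Setting $u=\alpha$ and using $b_{n,0}(\alpha)=1$ together with $b_{n,k}(\alpha)=0$ for $k\geq1$ (the $j=0$ instance of (\ref{eq:Hermite_conditions_0})) gives $t_{i,0}^n=\varphi_{n,i}(\alpha)$; symmetrically $u=\beta$ gives $t_{i,n}^n=\varphi_{n,i}(\beta)$; and (\ref{eq:partition_of_unity}) applied to $\varphi_{n,0}\equiv1$ gives the first row $t_{0,j}^n\equiv1$.

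Next I would treat the columns close to $\alpha$. Differentiating (\ref{eq:basis_transformation}) $j$ times and evaluating at $\alpha$, the conditions (\ref{eq:Hermite_conditions_0}) annihilate every term with index $k>j$, leaving $\varphi_{n,i}^{(j)}(\alpha)=\sum_{k=0}^{j}t_{i,k}^n b_{n,k}^{(j)}(\alpha)$. Differentiating (\ref{eq:partition_of_unity}) $j\geq1$ times at $\alpha$ yields $\sum_{k=0}^{j}b_{n,k}^{(j)}(\alpha)=0$, which lets me eliminate $b_{n,0}^{(j)}(\alpha)$; substituting the already-known value $t_{i,0}^n=\varphi_{n,i}(\alpha)$ and introducing $s_{i,k}:=t_{i,k}^n-\varphi_{n,i}(\alpha)$ (so that $s_{i,0}=0$) collapses everything to the lower-triangular recursion
\[
b_{n,j}^{(j)}(\alpha)\,s_{i,j}=\varphi_{n,i}^{(j)}(\alpha)-\sum_{k=1}^{j-1}s_{i,k}\,b_{n,k}^{(j)}(\alpha),\qquad j=1,2,\ldots,n,
\]
whose pivots $b_{n,j}^{(j)}(\alpha)$ are strictly positive by (\ref{eq:Hermite_conditions_0}). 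The formula (\ref{eq:first_half}) is precisely the closed-form solution of this recursion.

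The crux is therefore solving this triangular system explicitly. I would do this either by recognizing it as the inversion of the unit-upper-triangular factor $I+N$ of the coefficient matrix, where $N$ has entries $b_{n,k}^{(j)}(\alpha)/b_{n,j}^{(j)}(\alpha)$ for $k<j$, so that $(I+N)^{-1}=\sum_{\ell\geq0}(-N)^{\ell}$ reproduces exactly the signed chain sums over $r<k_1<\cdots<k_\ell<j$ occurring in (\ref{eq:first_half}); or, more elementarily, by induction on $j$, substituting the claimed expressions for $s_{i,1},\ldots,s_{i,j-1}$ into the recursion and collecting the coefficient of each $\varphi_{n,i}^{(r)}(\alpha)/b_{n,r}^{(r)}(\alpha)$. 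The induction step then reduces to the combinatorial identity that every chain $r<k_1<\cdots<k_\ell<j$ arises uniquely from a chain $r<k_1<\cdots<k_{\ell-1}<k_\ell$ followed by the ``edge'' $(k_\ell,j)$, which is what merges the outer $k$-sum of the recursion with the inner chain sums of $s_{i,k}$. This reindexing bookkeeping, rather than any analytic difficulty, is the main obstacle.

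Finally, the columns close to $\beta$ are handled by the mirror-image argument: differentiating (\ref{eq:basis_transformation}) $j$ times at $\beta$, the conditions (\ref{eq:Hermite_conditions_alpha}) now kill every term with index $k<n-j$, the $j$-th derivative of (\ref{eq:partition_of_unity}) at $\beta$ eliminates $b_{n,n}^{(j)}(\beta)$, and with $\tilde s_{i,k}:=t_{i,k}^n-\varphi_{n,i}(\beta)$ one obtains the analogous recursion in the variables $\tilde s_{i,n-1},\tilde s_{i,n-2},\ldots$, whose pivots $b_{n,n-j}^{(j)}(\beta)$ are non-zero by (\ref{eq:Hermite_conditions_alpha}); solving it exactly as above yields (\ref{eq:last_half}). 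I note that both recursions are in fact valid for all $j=1,\ldots,n$, so the two families of formulas necessarily agree on any column they both reach (in particular the middle column when $n$ is even); restricting the running index to $\lfloor n/2\rfloor$ in the statement merely keeps the two families essentially disjoint and bounds the order of the derivatives that have to be evaluated.
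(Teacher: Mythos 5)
Your proposal is correct and follows essentially the same route as the paper's proof: differentiate (\ref{eq:basis_transformation}) with gradually increasing order at $u=\alpha$ (resp.\ $u=\beta$), use the Hermite conditions (\ref{eq:Hermite_conditions_0})--(\ref{eq:Hermite_conditions_alpha}) to truncate the sums and the $i=0$ row (partition of unity) to eliminate $b_{n,0}^{(j)}(\alpha)$ (resp.\ $b_{n,n}^{(j)}(\beta)$), then solve the resulting unit-triangular recursion column by column, which the paper does by induction on the column index -- one of the two equivalent solution methods you describe. Your shift $s_{i,k}=t_{i,k}^{n}-\varphi_{n,i}(\alpha)$ and the Neumann-series inversion $(I+N)^{-1}=\sum_{\ell\geq 0}(-N)^{\ell}$ are merely a tidier packaging of the same signed chain sums over $r<k_{1}<\cdots<k_{\ell}<j$ that appear in (\ref{eq:first_half})--(\ref{eq:last_half}).
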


\begin{remark}[Evaluation]
	If in formulas (\ref{eq:first_half}) or (\ref{eq:last_half}),
	for some $\ell=1,2,\ldots,j-r-1$ (with $r=1,2,\ldots,j-1$ and $j=1,2,\ldots
	,\left\lfloor \frac{n}{2}\right\rfloor $) there exist no integers
	$k_{1},k_{2},\ldots,k_{\ell}$ such that $r<k_{1}<k_{2}<\ldots<k_{\ell}<j$
	then, by convention, the summation corresponding to $\ell$ equals $0$.
	If $n=2z\geq2$, then for $j=z$ one can evaluate the entries
	$\left[  t_{i,z}\right]  _{i=1}^{n}$ of the middle column by using either
	of these formulas, since the $z$th coefficients of the ordinary basis functions (\ref{eq:ordinary_basis}) in the normalized B-basis (\ref{eq:B-basis}) are unique. 
\end{remark}

Except some special but important cases, in general, one does not know the closed form of the normalized B-basis (\ref{eq:B-basis}) of  $\mathbb{S}_{n}^{\alpha,\beta}$. In case of EC spaces of traditional, trigonometric or hyperbolic polynomials of finite degree we have explicit closed formulas cf.\ \cite{Carnicer1993}, \cite{Sanchez1998} and \cite{ShenWang2005}, respectively; in case of a special class of mixed (e.g.\ algebraic trigonometric, algebraic hyperbolic, or both trigonometric and hyperbolic) EC spaces these functions appear in recursive integral form cf.\ \cite{MainarPena2010} and references therein; while the most general (determinant based) formulas that can be applied in such spaces was published in \cite{Mazure1999}. 
Thus, concerning the evaluation of (\ref{eq:first_half}) and (\ref{eq:last_half}), in general, one can differentiate the formulas presented in \cite[Theorem 3.4, p. 658]{Mazure1999} in order to calculate the higher order derivatives of the normalized B-basis functions (\ref{eq:B-basis}) at the endpoints of the interval $\left[\alpha,\beta\right]$. Namely, by using the function
\[
\phi\left(u\right) := 
\left[
\begin{array}{cccc}
\varphi_{n,1}\left(u\right)~&
\varphi_{n,2}\left(u\right)~&
\cdots~&
\varphi_{n,n}\left(u\right)
\end{array}
\right]^T,\,u\in\left[\alpha,\beta\right],
\]
one has to substitute the parameter values $u=\alpha$ and $u=\beta$ into the derivative formulas
\begin{align}
\label{eq:general_derivative_0}
b_{n,0}^{\left(j\right)}\left(u\right) = &~
\frac
{
	\det
	\left[
	\begin{array}{cccc}
	\phi^{\left(1\right)}(\beta)~&
	\cdots~&
	\phi^{\left(n-1\right)}\left(\beta\right)~&
	\phi^{\left(j\right)}\left(u\right)
	\end{array}
	\right]
}
{
	\det
	\left[
	\begin{array}{cccc}
	\phi^{\left(1\right)}(\beta)~&
	\cdots~&
	\phi^{\left(n-1\right)}\left(\beta\right)~&
	\phi\left(\alpha\right)-\phi\left(\beta\right)
	\end{array}
	\right]
},
\\
\label{eq:general_derivative_n}
b_{n,n}^{\left(j\right)}\left(u\right) = &~
\frac
{
	\det
	\left[
	\begin{array}{cccc}
	\phi^{\left(1\right)}(\alpha)~&
	\cdots~&
	\phi^{\left(n-1\right)}\left(\alpha\right)~&
	\phi^{\left(j\right)}\left(u\right)
	\end{array}
	\right]
}
{
	\det
	\left[
	\begin{array}{cccc}
	\phi^{\left(1\right)}(\alpha)~&
	\cdots~&
	\phi^{\left(n-1\right)}\left(\alpha\right)~&
	\phi\left(\beta\right)-\phi\left(\alpha\right)
	\end{array}
	\right]
},
\\
\label{eq:general_derivative_i}
b_{n,i}^{\left(j\right)}\left(u\right) = &~
\frac
{
	\det
	\left[
	\begin{array}{cccccccc}
	\phi^{\left(1\right)}(\alpha)~&
	\cdots~&
	\phi^{\left(i-1\right)}\left(\alpha\right)~&
	\phi^{\left(i\right)}\left(\alpha\right)~&
	\phi^{\left(1\right)}\left(\beta\right)~&
	\cdots~&
	\phi^{\left(n-i-1\right)}\left(\beta\right)~&
	\phi^{\left(n-i\right)}\left(\beta\right)
	\end{array}
	\right]
}
{
	\det
	\left[
	\begin{array}{cccccccc}
	\phi^{\left(1\right)}(\alpha)~&
	\cdots~&
	\phi^{\left(i-1\right)}\left(\alpha\right)~&
	\phi\left(\beta\right)-\phi\left(\alpha\right)~&
	\phi^{\left(1\right)}\left(\beta\right)~&
	\cdots~&
	\phi^{\left(n-i-1\right)}\left(\beta\right)~&
	\phi^{\left(n-i\right)}\left(\beta\right)
	\end{array}
	\right]
}
\\	
&~\cdot
\frac
{
	\det
	\left[
	\begin{array}{cccccccc}
	\phi\left(\beta\right)-\phi\left(\alpha\right)~&
	\phi^{\left(1\right)}(\alpha)~&
	\cdots~&
	\phi^{\left(i-1\right)}\left(\alpha\right)~&
	\phi^{\left(j\right)}\left(u\right)~&
	\phi^{\left(1\right)}\left(\beta\right)~&
	\cdots~&
	\phi^{\left(n-i-1\right)}\left(\beta\right)
	\end{array}
	\right]
}
{
	\det
	\left[
	\begin{array}{cccccccc}
	\phi\left(\beta\right)-\phi\left(\alpha\right)~&
	\phi^{\left(1\right)}(\alpha)~&
	\cdots~&
	\phi^{\left(i-1\right)}\left(\alpha\right)~&
	\phi^{\left(i\right)}\left(\alpha\right)~&
	\phi^{\left(1\right)}\left(\beta\right)~&
	\cdots~&
	\phi^{\left(n-i-1\right)}\left(\beta\right)
	\end{array}
	\right]		
}
\nonumber
\end{align}
for all $i = 1,2,\ldots,n-1$ and $j=1,2,\ldots,n$. However, as it is also mentioned in \cite{Mazure1999}, these general relations are difficult and computationally expensive to evaluate even in the most simple cases for either arbitrarily big or general values of the order $n$. Therefore, Section \ref{sec:examples} provides explicit closed formulas for the required endpoint derivatives in several special cases. Due to properties (\ref{eq:Hermite_conditions_0}) and (\ref{eq:Hermite_conditions_alpha}), these expressions should only be used whenever one does not know the exact value of the required endpoint derivatives.

Another core result of the current section is presented in the next
statement which is an immediate corollary of Theorem
\ref{thm:basis_transformation}.

\begin{corollary}
	[Exact description of ordinary integral functions]\label{cor:ordinary_functions}Let
	$\left\{  \lambda_{i}\right\}  _{i=0}^{n}$ be real numbers and consider the
	linear combination%
	\begin{equation}
	c\left(  u\right)  =\sum_{i=0}^{n}\lambda_{i}\varphi_{n,i}\left(  u\right)
	,~u\in\left[  \alpha,\beta\right]   \label{eq:ordinary_integral_function}%
	\end{equation}
	of ordinary basis functions. Then, we have the equality %
	$
	c\left(  u\right)  \equiv\sum_{j=0}^{n}p_{j}b_{n,j}\left(  u\right)  ,~\forall
	u\in\left[  \alpha,\beta\right]  ,
	$
	where %
	$
	p_{j}=\sum_{i=0}^{n}\lambda_{i}t_{i,j}^{n},~\allowbreak{}j=0,1,\ldots,n\text{.}%
	$
	
\end{corollary}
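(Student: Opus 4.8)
The plan is to obtain the statement as a one-line consequence of Theorem \ref{thm:basis_transformation}: I will substitute the B-basis expansion of each ordinary basis function into the linear combination (\ref{eq:ordinary_integral_function}) and then interchange the two resulting finite summations. No convergence, positivity, or interpolation argument is needed, which is precisely why the result is phrased as a corollary.

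Concretely, I would first read off from the matrix identity (\ref{eq:basis_transformation}) its $i$-th scalar component, namely $\varphi_{n,i}\left(u\right)=\sum_{j=0}^{n}t_{i,j}^{n}b_{n,j}\left(u\right)$, valid for every index $i=0,1,\ldots,n$ and every $u\in\left[\alpha,\beta\right]$, with the entries $t_{i,j}^{n}$ given explicitly by Theorem \ref{thm:basis_transformation}. Plugging this into (\ref{eq:ordinary_integral_function}) yields
\begin{equation*}
c\left(u\right)=\sum_{i=0}^{n}\lambda_{i}\varphi_{n,i}\left(u\right)=\sum_{i=0}^{n}\lambda_{i}\sum_{j=0}^{n}t_{i,j}^{n}b_{n,j}\left(u\right),\quad u\in\left[\alpha,\beta\right].
\end{equation*}
Since both sums run over finite index sets, I may exchange their order and group the terms according to the B-basis function they multiply, obtaining
\begin{equation*}
c\left(u\right)=\sum_{j=0}^{n}\left(\sum_{i=0}^{n}\lambda_{i}t_{i,j}^{n}\right)b_{n,j}\left(u\right)=\sum_{j=0}^{n}p_{j}b_{n,j}\left(u\right),\quad u\in\left[\alpha,\beta\right],
\end{equation*}
where $p_{j}:=\sum_{i=0}^{n}\lambda_{i}t_{i,j}^{n}$ for $j=0,1,\ldots,n$, which is exactly the asserted equality together with the claimed formula for the coefficients.

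There is no genuine obstacle in this argument; the only point I would make explicit is that, because $\mathcal{B}_{n}^{\alpha,\beta}$ is a basis of $\mathbb{S}_{n}^{\alpha,\beta}$ and $c\in\mathbb{S}_{n}^{\alpha,\beta}$, the scalars $p_{j}$ produced above are in fact the unique coordinates of $c$ with respect to $\mathcal{B}_{n}^{\alpha,\beta}$, so the formula $p_{j}=\sum_{i=0}^{n}\lambda_{i}t_{i,j}^{n}$ may be used without ambiguity. Equivalently, one may left-multiply both sides of (\ref{eq:basis_transformation}) by the row vector $\left[\lambda_{0},\lambda_{1},\ldots,\lambda_{n}\right]$ and read off $\left[p_{0},p_{1},\ldots,p_{n}\right]=\left[\lambda_{0},\lambda_{1},\ldots,\lambda_{n}\right]\cdot\left[t_{i,j}^{n}\right]_{i=0,\,j=0}^{n,\,n}$, which is the same identity in purely matrix form.
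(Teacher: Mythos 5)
Your proposal is correct and coincides with the paper's intended argument: the paper treats this statement as an immediate consequence of Theorem \ref{thm:basis_transformation}, obtained exactly by reading off the rows $\varphi_{n,i}\left(u\right)=\sum_{j=0}^{n}t_{i,j}^{n}b_{n,j}\left(u\right)$ of the matrix identity (\ref{eq:basis_transformation}), substituting them into the linear combination and interchanging the two finite sums, which is precisely what you do (and what the paper reuses inside the proof of Theorem \ref{thm:integral_curves}). Your closing remark on the uniqueness of the coordinates with respect to the basis $\mathcal{B}_{n}^{\alpha,\beta}$ is a harmless additional observation; no gap remains.
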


Based on Corollary \ref{cor:ordinary_functions}, the exact description of
ordinary integral curves as convex combinations of control points and
normalized B-basis functions (\ref{eq:B-basis}) is presented in the next theorem.

\begin{theorem}
	[Exact description of ordinary integral curves]\label{thm:integral_curves}The ordinary integral parametric curve%
	\begin{equation}
	\mathbf{c}\left(  u\right)  =\sum_{i=0}^{n}%
	\boldsymbol{\lambda}%
	_{i}\varphi_{n,i}\left(  u\right)  ,~u\in\left[  \alpha,\beta\right]  ,~%
	\boldsymbol{\lambda}%
	_{i}=\left[  \lambda_{i}^{\ell}\right]  _{\ell=1}^{\delta}\in%
	\mathbb{R}
	^{\delta},~\delta\geq2 \label{eq:ordinary_integral_curve}%
	\end{equation}
	of order $n$ can be written as an EC B-curve
	\begin{equation}
	\mathbf{c}\left(  u\right)  \equiv\sum_{j=0}^{n}\mathbf{p}_{j}b_{n,j}\left(
	u\right)  ,~\forall u\in\left[  \alpha,\beta\right]  ,~\mathbf{p}_{j}=\left[
	p_{j}^{\ell}\right]  _{\ell=1}^{\delta}\in%
	\mathbb{R}
	^{\delta}, \label{eq:convex_combination}%
	\end{equation}
	of the same order, 
	where %
	$
	p_{j}^{\ell}=\sum_{i=0}^{n}\lambda_{i}^{\ell}t_{i,j}^{n},~j=0,1,\ldots
	,n,~\ell=1,2,\ldots,\delta.
	$
	
\end{theorem}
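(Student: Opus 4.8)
The plan is to reduce the vector-valued statement to the scalar case already settled in Corollary~\ref{cor:ordinary_functions} by working one Cartesian coordinate at a time. Fix $\ell\in\left\{1,2,\ldots,\delta\right\}$ and consider the $\ell$-th coordinate function of the curve (\ref{eq:ordinary_integral_curve}), namely
\[
c^{\ell}\left(u\right)=\sum_{i=0}^{n}\lambda_{i}^{\ell}\varphi_{n,i}\left(u\right),\qquad u\in\left[\alpha,\beta\right].
\]
This is exactly a linear combination of ordinary basis functions of the form (\ref{eq:ordinary_integral_function}) with real scalars $\lambda_{i}=\lambda_{i}^{\ell}$. Hence Corollary~\ref{cor:ordinary_functions} applies verbatim and yields
\[
c^{\ell}\left(u\right)\equiv\sum_{j=0}^{n}p_{j}^{\ell}b_{n,j}\left(u\right),\qquad\forall u\in\left[\alpha,\beta\right],\qquad p_{j}^{\ell}=\sum_{i=0}^{n}\lambda_{i}^{\ell}t_{i,j}^{n}.
\]

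Next I would reassemble the $\delta$ scalar identities into a single vector identity. Stacking the coordinates $c^{1},\ldots,c^{\delta}$ (respectively $p_{j}^{1},\ldots,p_{j}^{\delta}$) into the vectors $\mathbf{c}\left(u\right)=\left[c^{\ell}\left(u\right)\right]_{\ell=1}^{\delta}$ and $\mathbf{p}_{j}=\left[p_{j}^{\ell}\right]_{\ell=1}^{\delta}\in\mathbb{R}^{\delta}$ gives
\[
\mathbf{c}\left(u\right)\equiv\sum_{j=0}^{n}\mathbf{p}_{j}b_{n,j}\left(u\right),\qquad\forall u\in\left[\alpha,\beta\right],
\]
which is precisely (\ref{eq:convex_combination}) with the claimed control points. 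This termwise recombination is legitimate because the scalar blending coefficients $b_{n,j}\left(u\right)$ are the same for every coordinate index $\ell$; equivalently, $\mathbf{p}_{j}=\sum_{i=0}^{n}t_{i,j}^{n}\boldsymbol{\lambda}_{i}$, i.e.\ the single transformation matrix $\left[t_{i,j}^{n}\right]$ from Theorem~\ref{thm:basis_transformation} acts on the control-coefficient vectors columnwise.

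Finally I would record two cosmetic points. The representation is an EC B-curve of the same order $n$ simply because $\mathcal{B}_{n}^{\alpha,\beta}$ is a basis of the very space $\mathbb{S}_{n}^{\alpha,\beta}$ to which each coordinate function $c^{\ell}$ belongs; and for every fixed $u\in\left[\alpha,\beta\right]$ the point $\mathbf{c}\left(u\right)$ is indeed a convex combination of the control points $\left\{\mathbf{p}_{j}\right\}_{j=0}^{n}$, which follows at once from the non-negativity of the $b_{n,j}$ together with the partition of unity (\ref{eq:partition_of_unity}). I do not expect any genuine obstacle here: all the analytic content — the existence and the explicit closed form of the entries $t_{i,j}^{n}$ — has already been absorbed into Theorem~\ref{thm:basis_transformation} and Corollary~\ref{cor:ordinary_functions}, so the only thing requiring care is the index bookkeeping, i.e.\ keeping the sum over $i$ (the ordinary index) and the sum over $\ell$ (the coordinate index) clearly separated.
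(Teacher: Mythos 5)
Your proposal is correct and follows essentially the same route as the paper's own proof: apply Corollary~\ref{cor:ordinary_functions} to each coordinate function $c^{\ell}$ and then collect the resulting coefficients of the $b_{n,j}$ into the control points $\mathbf{p}_{j}=\left[p_{j}^{\ell}\right]_{\ell=1}^{\delta}$. The only difference is cosmetic: you additionally spell out the convex-combination remark via (\ref{eq:partition_of_unity}), which the paper leaves implicit.
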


Using tensor products of convex combinations of type
(\ref{eq:convex_combination}), one can also exactly describe large families of
surfaces as it is specified in the following theorem.

\begin{theorem}
	[Exact description of ordinary integral surfaces]\label{thm:integral_surfaces}%
	Let
	\[
	\mathcal{F}_{n_{r}}^{\alpha_{r},\beta_{r}}=\left\{  \varphi_{n_{r},i_{r}}\left(  u_{r}\right)  :u_{r}\in\left[
	\alpha_{r},\beta_{r}\right]  \right\}  _{i_{r}=0}^{n_{r}},~\varphi_{n_{r},0} \equiv 1,~r=1,2
	\]
	be two ordinary EC bases of some vector spaces $\mathbb{S}%
	_{n_{r}}^{\alpha_{r},\beta_{r}}$ of functions and also consider
	their unique normalized B-bases %
	$
	\mathcal{B}_{n_{r}}^{\alpha_{r},\beta_{r}}=\left\{  b_{n_{r},j_{r}}\left(
	u_{r}\right)  :u_{r}\in\left[  \alpha_{r},\beta_{r}\right]  \right\}
	_{j_{r}=0}^{n_{r}},~r=1,2.
	$
	Denote by $[  t_{i_{r},j_{r}}^{n_{r}}]  _{i_{r}=0,~j_{r}=0}%
	^{n_{r},~n_{r}}$ the regular square matrix that transforms $\mathcal{B}%
	_{n_{r}}^{\alpha_{r},\beta_{r}}$ to $\mathcal{F}_{n_{r}}^{\alpha_{r},\beta
		_{r}}$ and consider the ordinary integral surface%
	\begin{equation}
	\mathbf{s}\left(  \mathbf{u}\right)  =\left[
	\begin{array}
	[c]{ccc}%
	s^{1}\left(  \mathbf{u}\right)   & s^{2}\left(  \mathbf{u}\right)   &
	s^{3}\left(  \mathbf{u}\right)
	\end{array}
	\right]  ^{T}\in%
	\mathbb{R}
	^{3},~\mathbf{u=}\left[  u_{r}\right]  _{r=1}^{2}\in\left[  \alpha_{1}%
	,\beta_{1}\right]  \times\left[  \alpha_{2},\beta_{2}\right]
	\label{eq:ordinary_integral_surface}%
	\end{equation}
	of order $\mathbf{n=}\left[  n_{r}\right]  _{r=1}^{2}$, where%
	\begin{equation}
	s^{\ell}\left(  \mathbf{u}\right)  =\sum_{\zeta=1}^{\sigma_{\ell}}\prod_{r=1}%
	^{2}\left(  \sum_{i_{r}=0}^{n_{r}}\lambda_{i_{r}}^{\ell,\zeta}\varphi
	_{n_{r},i_{r}}\left(  u_{r}\right)  \right)  ,~\sigma_{\ell}\geq 1, ~\ell
	=1,2,3.\label{eq:ordinary_integral_surface_cf}%
	\end{equation}
	Then, the surface
	(\ref{eq:ordinary_integral_surface}) can be written in the tensor product form
	with the EC B-surface%
	\begin{equation}
	\mathbf{s}\left(  \mathbf{u}\right)  \equiv\sum_{j_{1}=0}^{n_{1}}\sum
	_{j_{2}=0}^{n_{2}}\mathbf{p}_{j_{1},j_{2}}b_{n_{1},j_{1}}\left(  u_{1}\right)
	b_{n_{2},j_{2}}\left(  u_{2}\right)  ,~\forall\mathbf{u=}\left[  u_{r}\right]
	_{r=1}^{2}\in\left[  \alpha_{1},\beta_{1}\right]  \times\left[  \alpha
	_{2},\beta_{2}\right]  \label{eq:tensor_product}%
	\end{equation}
	of the same order, where the vectors $\mathbf{p}_{j_{1},j_{2}}=[
	p_{j_{1},j_{2}}^{\ell}]  _{\ell=1}^{3}\in%
	\mathbb{R}
	^{3}$ form the control net defined by coordinates%
	\begin{equation}
	p_{j_{1},j_{2}}^{\ell}=\sum_{\zeta=1}^{\sigma_{\ell}}\prod_{r=1}^{2}p_{j_{r}}%
	^{\ell,\zeta},~~~p_{j_{r}}^{\ell,\zeta}:=\sum_{i_{r}=0}^{n_{r}}\lambda_{i_{r}%
	}^{\ell,\zeta}t_{i_{r},j_{r}}^{n_{r}},~j_{r}=0,1,\ldots,n_{r},~r=1,2,~\zeta = 1,2,\ldots,\sigma_{\ell},~\ell
	=1,2,3.\label{eq:surface_cp}%
	\end{equation}
	
\end{theorem}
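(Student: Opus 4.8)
The plan is to reduce the surface statement to the already-established curve statement (Theorem~\ref{thm:integral_curves}) applied twice, exploiting the separable structure of the coordinate functions~(\ref{eq:ordinary_integral_surface_cf}) together with bilinearity of the tensor product. First I would fix an index $\ell\in\{1,2,3\}$ and a summand index $\zeta\in\{1,2,\ldots,\sigma_\ell\}$, and consider the two univariate scalar functions
\[
c_r^{\ell,\zeta}\left(u_r\right):=\sum_{i_r=0}^{n_r}\lambda_{i_r}^{\ell,\zeta}\varphi_{n_r,i_r}\left(u_r\right),\qquad u_r\in\left[\alpha_r,\beta_r\right],\quad r=1,2,
\]
each of which is an ordinary integral function of the type treated in Corollary~\ref{cor:ordinary_functions}. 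Applying that corollary (with the transformation matrix $[t_{i_r,j_r}^{n_r}]$ for the space $\mathbb{S}_{n_r}^{\alpha_r,\beta_r}$) gives $c_r^{\ell,\zeta}\left(u_r\right)\equiv\sum_{j_r=0}^{n_r}p_{j_r}^{\ell,\zeta}b_{n_r,j_r}\left(u_r\right)$ with the coefficients $p_{j_r}^{\ell,\zeta}$ exactly as defined in~(\ref{eq:surface_cp}).

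Next I would multiply the two resulting finite sums. Since $s^\ell(\mathbf{u})=\sum_{\zeta=1}^{\sigma_\ell}c_1^{\ell,\zeta}(u_1)\,c_2^{\ell,\zeta}(u_2)$ by~(\ref{eq:ordinary_integral_surface_cf}), substituting the B-basis expansions and expanding the product yields
\[
s^\ell\left(\mathbf{u}\right)\equiv\sum_{\zeta=1}^{\sigma_\ell}\left(\sum_{j_1=0}^{n_1}p_{j_1}^{\ell,\zeta}b_{n_1,j_1}\left(u_1\right)\right)\left(\sum_{j_2=0}^{n_2}p_{j_2}^{\ell,\zeta}b_{n_2,j_2}\left(u_2\right)\right)=\sum_{j_1=0}^{n_1}\sum_{j_2=0}^{n_2}\left(\sum_{\zeta=1}^{\sigma_\ell}p_{j_1}^{\ell,\zeta}p_{j_2}^{\ell,\zeta}\right)b_{n_1,j_1}\left(u_1\right)b_{n_2,j_2}\left(u_2\right),
\]
where interchanging the order of the (finite) summations over $\zeta$, $j_1$, $j_2$ is justified with no convergence issues. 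The inner coefficient is precisely $p_{j_1,j_2}^\ell$ as defined in~(\ref{eq:surface_cp}). Collecting the three coordinates $\ell=1,2,3$ into the vectors $\mathbf{p}_{j_1,j_2}=[p_{j_1,j_2}^\ell]_{\ell=1}^3$ then gives the claimed tensor product representation~(\ref{eq:tensor_product}), valid for every $\mathbf{u}\in[\alpha_1,\beta_1]\times[\alpha_2,\beta_2]$.

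Honestly there is no real obstacle here: the statement is, as remarked in the excerpt, an immediate consequence of the one-variable case, and the proof is essentially bookkeeping — tracking the index $\zeta$ through the product and verifying that the distributive law reproduces~(\ref{eq:surface_cp}) verbatim. The only point that warrants a sentence of care is that each factor $c_r^{\ell,\zeta}$ must genuinely lie in the space $\mathbb{S}_{n_r}^{\alpha_r,\beta_r}$ whose ordinary basis is $\mathcal{F}_{n_r}^{\alpha_r,\beta_r}$, so that Corollary~\ref{cor:ordinary_functions} applies with the correct matrix $[t_{i_r,j_r}^{n_r}]$; this is immediate from the definition of $c_r^{\ell,\zeta}$ as a linear combination of exactly those basis functions. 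I would also note in passing that the argument extends verbatim to tensor products of more than two EC spaces and to coordinate functions built from products of more than two separable factors, which is why the hypothesis $\sigma_\ell\geq 1$ and arbitrary $n_r$ cause no difficulty.
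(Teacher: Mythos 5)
Your proposal is correct and follows essentially the same route as the paper's own proof: convert each univariate factor $\sum_{i_r}\lambda_{i_r}^{\ell,\zeta}\varphi_{n_r,i_r}$ to the normalized B-basis via Corollary \ref{cor:ordinary_functions} (equivalently Theorem \ref{thm:basis_transformation}), expand the product, interchange the finite sums, and identify the coefficients with (\ref{eq:surface_cp}). Nothing is missing; the bookkeeping matches the paper's argument step for step.
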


\begin{remark}
	[Exact description of ordinary integral volumes]Naturally, Theorem
	\ref{thm:integral_surfaces} can easily be extended to the control point based
	exact description of those tri- or higher variate integral multivariate
	surfaces (volumes) that are specified in traditional parametric form with
	coordinate functions described as sums of separable products of
	linear combinations of the type (\ref{eq:ordinary_integral_function}).
\end{remark}

If the denominator of the rational counterpart of the ordinary integral curve
(\ref{eq:ordinary_integral_curve}) is strictly positive, then, by means of
control points and non-negative weights of rank $1$, one can also exactly
describe ordinary rational curves as it is illustrated in the steps of the
next algorithm.

\begin{algorithm}
	[Exact description of ordinary rational curves]\label{alg:ordinary_rational_curves}%
	Consider in $\mathbb{R}^{\delta}$ the rational curve%
	\begin{equation}
	\mathbf{c}\left(  u\right)  =\frac{1}{c^{\delta+1}\left(  u\right)  }\left[
	c^{\ell}\left(  u\right)  \right]  _{\ell=1}^{\delta},~u\in\left[
	\alpha,\beta\right]  \label{eq:ordinary_rational_curve}%
	\end{equation}
	given in ordinary parametric form, where%
	\begin{align*}
	c^{\ell}\left(  u\right)   &  =\sum_{i=0}^{n}\lambda_{i}^{\ell}\varphi
	_{n,i}\left(  u\right)  ,~\ell=1,2,\ldots,\delta+1,~~
	c^{\delta+1}\left(  u\right)    >0,~\forall u\in\left[  \alpha,\beta\right]
	.
	\end{align*}
	Using the rational counterpart of EC B-curves (\ref{eq:convex_combination}), the process that provides the control point and weight
	based exact representation%
	\begin{equation}
	\mathbf{c}\left(  u\right)  \equiv\frac{\sum_{j=0}^{n}w_{j}\mathbf{p}%
		_{j}b_{n,j}\left(  u\right)  }{\sum_{r=0}^{n}w_{r}b_{n,r}\left(  u\right)
	},~\forall u\in\left[  \alpha,\beta\right]
	\label{eq:rational_curve_representation}%
	\end{equation}
	consists of the following steps:
	
	\begin{itemize}[nolistsep]
		\item apply Theorem \ref{thm:integral_curves} to the higher dimensional
		pre-image %
		$
		\mathbf{c}^{\wp}\left(  u\right)  =\left[  c^{\ell}\left(  u\right)  \right]
		_{\ell=1}^{\delta+1},~u\in\left[  \alpha,\beta\right]  ,
		$
		i.e., compute control points %
		$
		\mathbf{p}_{j}^{\wp}=\left[  p_{j}^{\ell}\right]  _{\ell=1}^{\delta
			+1},~j=0,1,\ldots,n
		$
		for the exact description of $\mathbf{c}^{\wp}$ in the pre-image space $%
		\mathbb{R}
		^{\delta+1}$;
		
		\item project the obtained control points from the origin
		$\mathbf{0}_{\delta+1}\in%
		\mathbb{R}
		^{\delta+1}$ onto the hyperplane $x^{\delta+1}=1$ that results in the control
		points %
		$
		\mathbf{p}_{j}=\dfrac{1}{p_{j}^{\delta+1}}\left[  p_{j}^{\ell}\right]
		_{\ell=1}^{\delta}\in%
		\mathbb{R}
		^{\delta}
		$
		and weights %
		$
		w_{j}=p_{j}^{\delta+1}
		$
		needed for the rational representation (\ref{eq:rational_curve_representation});
		
		\item the above generation process does not necessarily ensure the
		non-negativity of all weights, since the last coordinate of some control
		points $\mathbf{p}_{j}^{\wp}$ in the pre-image space $%
		\mathbb{R}
		^{\delta+1}$ can be negative; if this is the case, one should elevate the dimension (and consequently the order $n$ of the normalized B-basis $\mathcal{B}_{n}^{\alpha, \beta}$) of the underlying EC space with an algorithm that generates a sequence of control polygons in $\mathbb{R}^{\delta + 1}$ that converges to $\mathbf{c}^{\wp}$ which, by definition, is a
		geometric object of one branch that does not intersect the vanishing plane
		$x^{\delta+1}=0$, since the $\left(  \delta+1\right)  $th coordinate of all
		its points are strictly positive; therefore, by using proper dimension elevation methods, it is guaranteed that exists a
		finite and minimal order for which all weights are non-negative.
	\end{itemize}
\end{algorithm}

\begin{remark}[About the last step of Algorithm \ref{alg:ordinary_rational_curves}]
	If the pre-image $\mathbf{c}^{\wp}$ of (\ref{eq:ordinary_rational_curve}) is described as B-curves of type (\ref{eq:convex_combination}) by means of the normalized B-bases of the EC spaces $\mathbb{S}_n^{\alpha, \beta}\subset \mathbb{S}_{n+1}^{\alpha,\beta}$, then 
	\[
	\mathbf{c}^{\wp}\left(u\right) = \sum_{j=0}^{n}\mathbf{p}_{j}^{\wp}b_{n,j}\left(
	u\right) \equiv
	\sum_{j=0}^{n+1}\mathbf{p}^{\wp}_{1,j}b_{n+1,j}\left(
	u\right),\,\forall u \in \left[\alpha, \beta\right],
	\]
	where $\mathbf{p}_{1,0}^{\wp}\equiv \mathbf{p}_0^{\wp}$, $\mathbf{p}_{1,n+1}^{\wp}\equiv \mathbf{p}_n^{\wp}$, while $\mathbf{p}_{1,j}^{\wp} = \left(1-\xi_j\right) \mathbf{p}_{j-1}^{\wp} + \xi_j \mathbf{p}_j^{\wp}$ for some real numbers $\xi_j \in \left(0,1\right)$, $j=0,1,\ldots,n$. Iterating this corner cutting based representation of $\mathbf{c}^{\wp}$ in the normalized B-bases of the nested EC spaces $\mathbb{S}_n^{\alpha, \beta}\subset \mathbb{S}_{n+1}^{\alpha,\beta}\subset \ldots \subset \mathbb{S}_{n+z}^{\alpha,\beta}\subset \ldots$, one obtains a sequence of control polygons which converges to a Lipschitz-continuous limit curve \cite{deBoor1990} that, in general, does not necessarily coincide with $\mathbf{c}^{\wp}$. As it is pointed out in a unified manner in \cite[Remark 2.3, p. 76]{Roth2015},  in case of vector spaces of finite order trigonometric/hyperbolic polynomials, the sequence of order elevated control polygons always converges to the curve generated by the first term of the sequence. In case of traditional polynomials of finite degree, one can use the well-known degree elevation techniques of (rational) B\'ezier curves. However, in general, the initial ordinary basis $\mathcal{F}^{\alpha, \beta}_{n}$ can iteratively be appended by new linearly independent functions in infinitely many ways and not every choice of functions leads to a sequence of order elevated control polygons that fulfills the desired convergence property, e.g.\ in EC M\"untz spaces a recent characterization of the required convergence of the dimension/order elevation process can be found in \cite{AitHadou2014}. In order to illustrate the last step of Algorithm \ref{alg:ordinary_rational_curves}, Fig.\ \ref{fig:last_step_of_the_algorithm} shows two different control point configurations for the exact representation of the rational trigonometric curve
	\begin{equation}
	\label{eq:second_order_rational_trigonometric_curve}
	\mathbf{c}\left(u\right)
	=
	\frac{1}{\frac{5}{8}-\frac{1}{2}\sin\left(2u\right)}
	\left[
	\begin{array}{c}
	\frac{1}{\sqrt{2}}\left(\sin\left(u\right)+\cos\left(u\right)\right)\\
	\frac{3}{2}\cos\left(2u\right)
	\end{array}
	\right],
	~
	u \in \left[0, \frac{\pi}{2}\right],
	\end{equation}
	by means of second and fourth order normalized trigonometric basis functions of type (\ref{eq:trigonometric_B-basis-n}).
\end{remark}

\begin{figure}[!h]
	\centering
	\includegraphics[scale = 1]{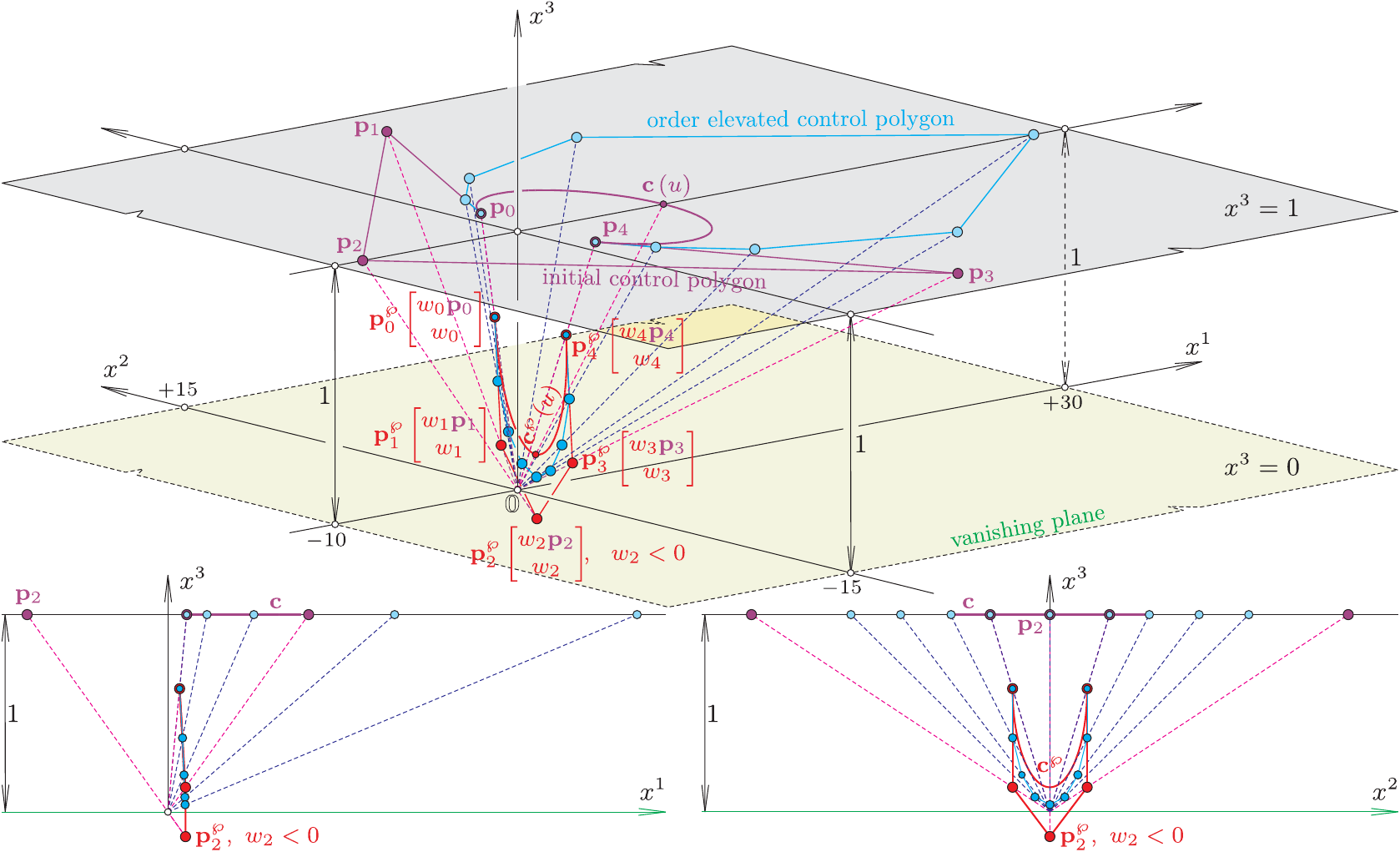}
	\caption{
			Although the integral pre-image $\mathbf{c}^{\wp}$ of (\ref{eq:second_order_rational_trigonometric_curve}) is above the vanishing plane $x^3=0$, minimal (i.e., second) order trigonometric normalized B-basis functions of type (\ref{eq:trigonometric_B-basis-n}) do not ensure the non-negativity of all weights required by the rational representation (\ref{eq:rational_curve_representation}), since the third coordinate $w_2$ of the control point $\mathbf{p}_2^{\wp}$ is strictly negative. Such pathological cases usually violate the convex hull property of (\ref{eq:rational_curve_representation}). Increasing the order of the applied normalized B-basis functions from $2$ to $4$, all control points needed for the exact representation of the pre-image $\mathbf{c}^{\wp}$ will be above the vanishing plane, i.e., in this case there exists a minimal order starting from which the applied dimension elevation technique ensures the non-negativity of all weights.}
	\label{fig:last_step_of_the_algorithm} 
\end{figure}

\begin{remark}
	[Exact description of ordinary rational surfaces]\label{rem:rational_surfaces}%
	The steps of Algorithm \ref{alg:ordinary_rational_curves} can easily be
	extended to the control point based exact description of those ordinary
	rational surfaces%
	\begin{equation}
	\mathbf{s}\left(  \mathbf{u}\right)  =\frac{1}{s^{4}\left(  \mathbf{u}\right)
	}\left[
	\begin{array}
	[c]{ccc}%
	s^{1}\left(  \mathbf{u}\right)   & s^{2}\left(  \mathbf{u}\right)   &
	s^{3}\left(  \mathbf{u}\right)
	\end{array}
	\right]  ^{T}\in%
	\mathbb{R}
	^{3},~\mathbf{u=}\left[  u_{r}\right]  _{r=1}^{2}\in\left[  \alpha_{1}%
	,\beta_{1}\right]  \times\left[  \alpha_{2},\beta_{2}\right]
	,\label{eq:ordinary_rational_surface}%
	\end{equation}
	in case of which%
	\[
	s^{\ell}\left(  \mathbf{u}\right)  =\sum_{\zeta=1}^{\sigma_{\ell}}\prod_{r=1}%
	^{2}\left(  \sum_{i_{r}=0}^{n_{r}}\lambda_{i_{r}}^{\ell,\zeta}\varphi
	_{n_{r},i_{r}}\left(  u_{r}\right)  \right)  ,~\sigma_{\ell}\geq 1,~\ell=1,2,3,4
	\]
	and $s^{4}\left(  \mathbf{u}\right)  >0,~\forall\mathbf{u}\in\left[
	\alpha_{1},\beta_{1}\right]  \times\left[  \alpha_{2},\beta_{2}\right]  $.
\end{remark}

\begin{theorem}
	[Computational complexity]\label{thm:computational_complexity}Provided that
	endpoint derivatives $\{  \varphi_{n,i}^{\left(  j\right)  }\left(
	\alpha\right)  ,\varphi_{n,i}^{\left(  j\right)  }\left(  \beta\right)
	,b_{n,i}^{\left(  j\right)  }\left(  \alpha\right)  ,\linebreak{}b_{n,i}^{\left(
		j\right)  }\left(  \beta\right)  \}  _{i=1,~j=0}^{n,~\left\lfloor
		\frac{n}{2}\right\rfloor }$ are calculated and stored in
	advance in permanent lookup tables  for a fixed value of $n\geq 1$, the total number of floating point operations (or shortly flops) that
	have to be performed for the evaluation of formulas (\ref{eq:first_half}) and (\ref{eq:last_half}) is%
	\begin{equation}
	\label{eq:total_computational_cost}
	\def\arraystretch{1.5}
	\kappa\left(n\right)=\left\{
	\begin{array}
	[c]{lc}%
	2^{\left\lfloor \frac{n}{2}\right\rfloor +1}\left(  \left\lfloor \frac{n}%
	{2}\right\rfloor -3\right)  +2\left\lfloor \frac{n}{2}\right\rfloor
	+6+2n\left\lfloor \frac{n}{2}\right\rfloor \left(  \left\lfloor \frac{n}%
	{2}\right\rfloor +1\right)  , & n\left(  \operatorname{mod}2\right)  =1,\\
	2^{ \frac{n}{2} -1}\left(\frac{3n}%
	{2}-10\right)  +n
	+5+ \frac{n^3}{2}, & n\left(  \operatorname{mod}%
	2\right)  =0.
	\end{array}
	\right.
	\def\arraystretch{1.0}
	\end{equation}
\end{theorem}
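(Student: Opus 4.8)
The plan is to read the flop count of Theorem~\ref{thm:computational_complexity} off the structure of (\ref{eq:first_half}) and (\ref{eq:last_half}) directly, organising the evaluation so that every subexpression that does not involve the row index $i$ is formed only once. The preliminary observation is that, once the endpoint derivatives have been fetched from the lookup tables, the bracketed quantity
\[
B_{r,j}:=b_{n,r}^{(j)}(\alpha)+\sum_{\ell=1}^{j-r-1}(-1)^{\ell}\sum_{r<k_{1}<\cdots<k_{\ell}<j}\frac{b_{n,r}^{(k_{1})}(\alpha)\,b_{n,k_{1}}^{(k_{2})}(\alpha)\cdots b_{n,k_{\ell}}^{(j)}(\alpha)}{b_{n,k_{1}}^{(k_{1})}(\alpha)\cdots b_{n,k_{\ell}}^{(k_{\ell})}(\alpha)}
\]
and the reciprocals $1/b_{n,j}^{(j)}(\alpha)$ are free of $i$, so that, writing $c_{r,j}:=B_{r,j}\big/\bigl(b_{n,r}^{(r)}(\alpha)\,b_{n,j}^{(j)}(\alpha)\bigr)$, formula (\ref{eq:first_half}) becomes
\[
t_{i,j}^{n}=\varphi_{n,i}(\alpha)-\sum_{r=1}^{j-1}\varphi_{n,i}^{(r)}(\alpha)\,c_{r,j}+\frac{\varphi_{n,i}^{(j)}(\alpha)}{b_{n,j}^{(j)}(\alpha)},\qquad i=1,\ldots,n,\ \ j=1,\ldots,\lfloor n/2\rfloor,
\]
and analogously for (\ref{eq:last_half}) at $\beta$. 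The count therefore falls into two natural stages.

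The first stage forms the $\lfloor n/2\rfloor$ reciprocals and, for every pair $1\le r<j\le\lfloor n/2\rfloor$, the numbers $B_{r,j}$ and then $c_{r,j}$. The decisive combinatorial fact here is that, for fixed $r$ and $j$, the inner chain sum runs over the $\binom{m}{\ell}$ subsets $\{k_{1}<\cdots<k_{\ell}\}$ of $\{r+1,\ldots,j-1\}$ with $m:=j-r-1$, and the evaluation of a single such term takes a number of flops that grows linearly in $\ell$ (multiplying together $\ell+1$ stored numerators and $\ell$ stored reciprocals, applying the sign $(-1)^{\ell}$, and accumulating). Consequently the cost of one $B_{r,j}$ is a fixed affine combination of $2^{m}=\sum_{\ell}\binom{m}{\ell}$ and $m\,2^{m-1}=\sum_{\ell}\ell\binom{m}{\ell}$; summing this over $r$ and then over $j$ reduces to the arithmetic--geometric sums $\sum_{m}2^{m}$, $\sum_{m}m\,2^{m}$ and ultimately $\sum_{j}(j-a)\,2^{j}$, and it is precisely the closed form of the latter that generates the leading term $2^{\lfloor n/2\rfloor+1}\bigl(\lfloor n/2\rfloor-3\bigr)$, together with the additive constant $6$ and the linear correction $2\lfloor n/2\rfloor$ in (\ref{eq:total_computational_cost}). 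The second stage loops over $i=1,\ldots,n$ and $j=1,\ldots,\lfloor n/2\rfloor$ and, for each pair, evaluates the dot product $\sum_{r=1}^{j-1}\varphi_{n,i}^{(r)}(\alpha)\,c_{r,j}$ in $2(j-1)$ flops, with a bounded number of further operations for the last term and the remaining two additions; the resulting double sum is an elementary $\sum_{j}j$-type quantity, yielding the polynomial contribution $2n\lfloor n/2\rfloor\bigl(\lfloor n/2\rfloor+1\bigr)$.

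It remains to assemble the total and deal with parity. By the complete symmetry of (\ref{eq:first_half}) and (\ref{eq:last_half}) under the exchange $\alpha\leftrightarrow\beta$, $b_{n,r}\leftrightarrow b_{n,n-r}$, the last-half formula costs exactly as much as the first-half one. For odd $n=2z+1$ the columns $1,\ldots,z$ produced by (\ref{eq:first_half}) and the columns $z+1,\ldots,2z$ produced by (\ref{eq:last_half}) are disjoint, so the total is simply twice the first-half count, which is the first branch of (\ref{eq:total_computational_cost}). For even $n=2z$ the middle column $j=z=n-z$ is produced by both formulas and, by the uniqueness observation in the Remark following Theorem~\ref{thm:basis_transformation}, needs to be computed only once; subtracting the cost of that single column from twice the first-half count collapses the two exponential and the two polynomial contributions into $2^{n/2-1}\bigl(\tfrac{3n}{2}-10\bigr)$ and $\tfrac{n^{3}}{2}$, while the surviving constant and linear parts supply the remaining $n+5$, giving the second branch.

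I expect the genuine difficulty to lie not in any single summation but in the bookkeeping that keeps every shared quantity counted exactly once: the reciprocals $1/b_{n,j}^{(j)}(\alpha)$ appear in (\ref{eq:first_half}) both in the role of $1/b_{n,j}^{(j)}$ and of $1/b_{n,r}^{(r)}$ and must be produced only once; the coefficients $c_{r,j}$ must be hoisted out of the $i$-loop rather than recomputed inside it; and for even $n$ the middle column has to be removed from the doubled total without disturbing the first-stage count. Once a convention fixing what counts as one flop is adopted, carrying the arithmetic--geometric sums $\sum_{j}(j-a)2^{j}$ to closed form and matching the lower-order constants is routine.
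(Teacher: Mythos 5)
Your overall strategy coincides with the paper's: hoist everything independent of the row index $i$ into per-column precomputations, count the chain sums through binomial coefficients and arithmetic--geometric series, obtain the polynomial contribution from the dot products in the $i$-loop, and get the two branches of (\ref{eq:total_computational_cost}) by doubling the half-count and, for even $n$, not recomputing the middle column. The problem is that the theorem asserts an \emph{exact} flop count, and that count is tied to one specific evaluation scheme and flop convention; the scheme you set up does not reproduce it, and the step you dismiss as ``routine'' is precisely where the content of the statement lies. The paper evaluates $g_{j,r}:=\bigl(b^{(j)}_{n,r}(\alpha)+\sum_{\ell}(-1)^{\ell}s_{j,r,\ell}\bigr)/b^{(r)}_{n,r}(\alpha)$ at a cost of exactly $2^{j-r-1}(j-r)$ flops (each chain fraction costs $2\ell$: $\ell$ numerator and $\ell-1$ denominator multiplications plus one division; the sign $(-1)^{\ell}$ is absorbed into an addition/subtraction and costs nothing; the accumulations contribute $\binom{j-r-1}{\ell}-1$ per $\ell$, then $j-r-1$ additions and one final division), then forms $h_{i,j}=\sum_{r=1}^{j-1}\varphi^{(r)}_{n,i}(\alpha)\,g_{j,r}$ in $2j-3$ flops and $t^{n}_{i,j}=\varphi_{n,i}(\alpha)-\bigl(h_{i,j}-\varphi^{(j)}_{n,i}(\alpha)\bigr)/b^{(j)}_{n,j}(\alpha)$ in $3$ more, i.e.\ exactly $2j$ flops per matrix entry; summation gives $\sum_{j=1}^{z}\bigl(j2^{j-1}-2^{j}+1\bigr)+n\sum_{j=1}^{z}2j=2^{z}(z-3)+z+3+nz(z+1)$ with $z=\lfloor n/2\rfloor$, from which both parities follow exactly as you describe.

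Your bookkeeping deviates from this in ways that alter the lower-order terms, which are the whole point of the statement: you fold $1/b^{(j)}_{n,j}(\alpha)$ into $c_{r,j}=B_{r,j}\big/\bigl(b^{(r)}_{n,r}(\alpha)\,b^{(j)}_{n,j}(\alpha)\bigr)$, paying an extra operation for every pair $r<j$, yet you still pay a separate division per entry for $\varphi^{(j)}_{n,i}(\alpha)/b^{(j)}_{n,j}(\alpha)$ inside the $i$-loop; you charge $2(j-1)$ rather than $2j-3$ for the dot product and leave the remaining per-entry operations only ``bounded'', so the per-entry cost is not pinned to the exact $2j$ the formula requires; you count ``applying the sign'' and per-term accumulation inside the chain cost; and you rely on stored reciprocals that are not among the tabulated endpoint derivatives of the hypothesis, so either their computation must be charged or the convention changed. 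Under your conventions, twice the half-count (minus the middle column for even $n$) misses $\kappa(n)$ by terms of order $nz$ and $z^{2}$, so asserting that the sums ``generate'' the constants $+6$, $+2\lfloor n/2\rfloor$ and $n+5$ assumes exactly what has to be proved. To close the gap you must commit to a scheme equivalent to the one above, verify the per-pair cost $2^{j-r-1}(j-r)$ and the per-entry cost $2j$, and actually carry the resulting sums to closed form for both parities.
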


Using the normalized B-basis $\mathcal{B}_{n}^{\alpha,\beta}$, the control point based exact description of the ordinary integral curve (\ref{eq:ordinary_integral_curve}) can also be imagined either as a curve interpolation problem or as the least square approximation of the considered curve. Both of these alternative numeric methods can be reduced to the solutions of $\delta$ systems of linear equations that determine the unknown coordinates of the control points $\left[\mathbf{p}_j\right]_{j=0}^{n}$ appearing in the EC B-curve representation (\ref{eq:convex_combination}). Such methods also depend heavily either on the choice of the interpolation conditions or on the applied quadrature formulas used for the approximation of those integrals that appear in the equivalent quadratic form of the $L_2$ distance function used in case of least square approximations -- but let us neglect both the floating-point round-off errors and the computational cost (i.e., the number of flops) of the regular main matrices of the size $\left(n+1\right)\times\left(n+1\right)$ of these alternative methods, and let us compare the exponential computational cost (\ref{eq:total_computational_cost}) to the total work of a possible algorithm that efficiently solves $\delta$ systems of linear equations. The fastest currently known matrix inversion algorithms \cite{CoppersmithWinograd1990} and \cite{Vassilevska2012} are based on the fast matrix multiplication algorithm of Strassen \cite{Strassen1969} and have an asymptotic cost of order $\mathcal{O}\left(\left(n+1\right)^{2.376}\right)$ and $\mathcal{O}\left(\left(n+1\right)^{2.373}\right)$, respectively, instead of $\mathcal{O}\left(\left(n+1\right)^3\right)$ of traditional matrix inversion algorithms based e.g.\ on $LU$ decomposition. However, if one estimates how large $n+1$ has to be before the difference
between exponents $2.373|2.376$ and $3$ is substantial enough to outweigh the bookkeeping overhead, arising
from the complicated nature of the recursive Strassen algorithm, one finds that $LU$ decomposition \textit{is in no
	immediate danger of becoming obsolete} \cite[p. 108]{Press2007}. The fast matrix multiplication/inversion algorithm of Strassen is typically used for $n+1 > 99$, thus it is not practical to implement it for modeling purposes,
since it is very unlikely that one would describe an arc of an ordinary integral curve with more than $99$ control points.

In case of a regular main matrix of the size $\left(n+1\right)\times\left(n+1\right)$, the number of 
flops performed by a numerical
curve interpolation or least square approximation method based on $LU$ decomposition is
\begin{equation}
\label{eq:LU_cost}
\kappa_{LU}\left(n,\delta\right)=
\frac{2}{3}\left(n+1\right)^3-\frac{1}{2}\left(n+1\right)^2-\frac{1}{6}\left(n+1\right)+\left(2\left(n+1\right)^2-\left(n+1\right)\right)\delta
\end{equation}
which covers the cost of computing of all multipliers, of all row operations and of all forward and backward
substitutions as well. Naturally, as $n$ tends to infinity, the growth rate of the exponential cost function (\ref{eq:total_computational_cost}) is substantially bigger than that of the cubic one (\ref{eq:LU_cost}), however if $n \in \left\{1,2,\ldots,15\right\}$ then (\ref{eq:total_computational_cost}) is less than (\ref{eq:LU_cost}) as it is illustrated in Fig.\ \ref{fig:computational_cost}, i.e.,  compared with other cubic time numerical algorithms, the proposed general basis transformation can more efficiently be implemented up to $16$-dimensional EC spaces despite the seemingly complicated nature of formulas (\ref{eq:first_half}) and (\ref{eq:last_half}). Considering that, in practice, curves and surfaces are mostly composed of continuously joined lower order arcs and patches, even a sequential but clever implementation of Theorem \ref{thm:basis_transformation} can be useful in case of real-life applications. Nevertheless, if $n>15$ then the presented results are mainly of theoretical interest.

\begin{figure}[!h]
	\centering
	\includegraphics[scale = 1]{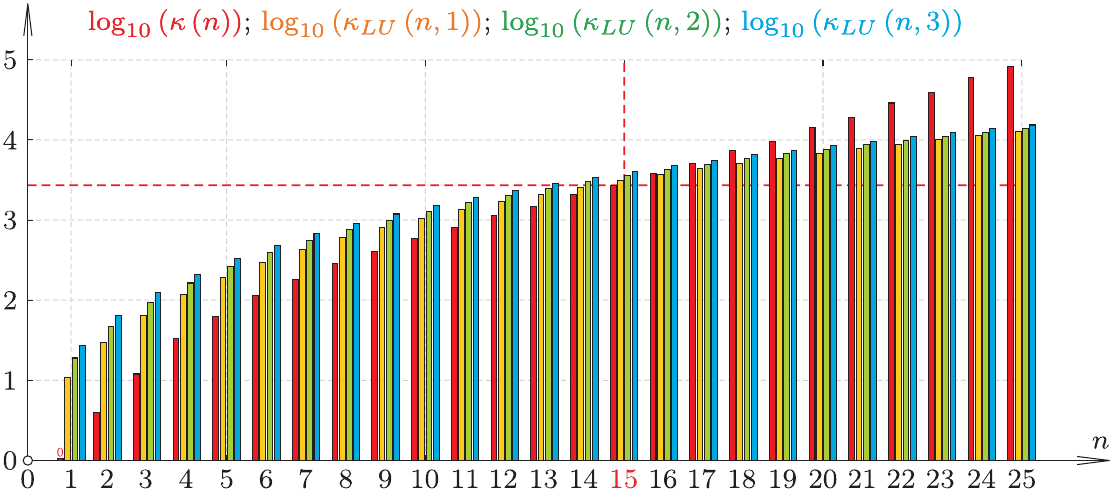}
	\caption{Logarithmic scales of computational costs (\ref{eq:total_computational_cost}) and (\ref{eq:LU_cost}) for different values of $n\geq 1$ and $\delta \geq 1$.}
	\label{fig:computational_cost}
\end{figure}

\section{Examples\label{sec:examples}}

This section applies closed formulas
(\ref{eq:first_half}) and (\ref{eq:last_half}) in case of different vector
spaces of functions that can be spanned by ordinary EC bases
of the type (\ref{eq:ordinary_basis}). Our intention is only to emphasize the
global applicability of the general basis transformation described in
Theorem \ref{thm:basis_transformation} with examples that can be compared to
possible already existing results in the literature. Formulas
(\ref{eq:first_half}) and (\ref{eq:last_half}) depend on the higher order
endpoint derivatives of the ordinary and normalized B-basis of the underlying
vector space. The following subsections specify these values in case of vector
spaces of functions that may be important in many areas of applied or computational mathematics. We consider several reflection invariant EC spaces, since in practice usually one uses unbiased or symmetric systems of basis functions that also provide some computational advantages. Naturally, general formulas (\ref{eq:first_half})--(\ref{eq:last_half}) are  valid in not necessarily reflection invariant EC spaces as well.

\subsection{Trigonometric polynomials\label{subsec:trigonometric_polynomials}}

Let $\alpha=0$ and $\beta\in\left(  0,\pi\right)  $ be fixed parameters and
consider the ordinary basis%
\begin{equation}
\mathcal{F}_{2n}^{0,\beta}=\left\{\varphi_{2n,0}\left(  u\right)
\equiv 1,~\left\{  \varphi_{2n,2i-1}\left(
u\right)  =\sin\left(  iu\right)  ,~\varphi_{2n,2i}\left(  u\right)
=\cos\left(  iu\right)  \right\}  _{i=1}^{n}:u\in\left[  0,\beta\right] \right\}
\label{eq:trigonometric_polynomials_n}%
\end{equation}
of trigonometric polynomials of order at most $n$ (degree $2n$). Using the
results of \cite{Sanchez1998}, the normalized B-basis of the vector space
$\mathbb{S}_{2n}^{0,\beta}=\left\langle \mathcal{F}_{2n}^{0,\beta
}\right\rangle $ can linearly be reparametrized into the form%
\begin{equation}
\mathcal{B}_{2n}^{0,\beta}=\left\{  b_{2n,i}\left(  u\right)  =c_{2n,i}%
^{\beta}\sin^{2n-i}\left(  \frac{\beta-u}{2}\right)  \sin^{i}\left(  \frac
{u}{2}\right)  :u\in\left[  0,\beta\right]  \right\}  _{i=0}^{2n},
\label{eq:trigonometric_B-basis-n}%
\end{equation}
where%
\begin{equation}
c_{2n,i}^{\beta}=c_{2n,2n-i}^{\beta}=\frac{1}{\sin^{2n}\left(  \frac{\beta}%
	{2}\right)  }\sum_{r=0}^{\left\lfloor \frac{i}{2}\right\rfloor }\binom{n}%
{i-r}\binom{i-r}{r}\left(  2\cos\left(  \frac{\beta}{2}\right)  \right)
^{i-2r},~i=0,1,\ldots,n \label{eq:trigonometric_normalizing_constants_n}%
\end{equation}
are symmetric normalizing coefficients. It is obvious that%
\begin{align*}
\varphi_{2n,2i-1}^{\left(  j\right)  }\left(  0\right)   &  =i^{j}\varphi
_{2n,2i-1}\left(  \frac{j\pi}{2}\right)  ,~\varphi_{2n,2i-1}^{\left(  j\right)
}\left(  \beta\right)  =i^{j}\varphi_{2n,2i-1}\left(  \beta+\frac{j\pi}%
{2}\right)  ,~j=0,1,\ldots,n,~i=1,2,\ldots,n,\\
\varphi_{2n,2i}^{\left(  j\right)  }\left(  0\right)   &  =i^{j}\varphi
_{2n,2i}\left(  \frac{j\pi}{2}\right)  ,~\varphi_{2n,2i}^{\left(  j\right)
}\left(  \beta\right)  =i^{j}\varphi_{2n,2i}\left(  \beta+\frac{j\pi}%
{2}\right)  ,~j=0,1,\ldots,n,~i=1,2,\ldots,n,\\
b_{2n,0}\left(  0\right)   &  =b_{2n,2n}\left(  \beta\right)  =1,~b_{2n,i}%
\left(  0\right)  =b_{2n,2n-i}\left(  \beta\right)  =0,~i=1,2,\ldots,2n,
\end{align*}
while the higher order derivatives $\{  b_{2n,i}^{\left(  j\right)
}\left(  0\right)  ,~b_{2n,i}^{\left(  j\right)  }\left(  \beta\right)
\}  _{i=0,~j=1}^{2n,~n}$ are specified by the next theorem.

\begin{theorem}
	[Trigonometric endpoint derivatives]%
	\label{thm:trigonometric_endpoint_derivatives}For arbitrary derivative order
	$j=1,2,\ldots,n$ we have that%
	\begin{align}
	\frac{b_{2n,2r+1}^{\left(  j\right)  }\left(  0\right)  }{c_{2n,2r+1}^{\beta}%
	}=  &  ~\frac{1}{2^{2n-1}}\sum_{k=0}^{n-r-1}\sum_{\ell=0}^{r}\left(
	-1\right)  ^{n+1-k-\ell}\binom{2\left(  n-r-1\right)  +1}{k}\binom{2r+1}{\ell
	}\cdot\label{eq:trigonometric_B-basis_n_odd_0}\\
	&  ~\cdot\left(  \left(  n-k-\ell\right)  ^{j}-\left(  n-k-2r+\ell-1\right)
	^{j}\right)  \cos\left(  \left(  2\left(  n-r-k\right)  -1\right)  \frac
	{\beta}{2}-\frac{j\pi}{2}\right)  ,\nonumber
	\end{align}
	for all $r=0,1,\ldots,n-1$ and%
	\begin{align}
	\frac{b_{2n,2r}^{\left(  j\right)  }\left(  0\right)  }{c_{2n,2r}^{\beta}}=
	&  ~\frac{\binom{2n-2r}{n-r}}{2^{2n-1}}\sum_{\ell=0}^{r-1}\left(  -1\right)
	^{r-\ell}\binom{2r}{\ell}\left(  r-\ell\right)  ^{j}\cos\left(  \frac{j\pi}%
	{2}\right) \label{eq:trigonometric_B-basis_n_even_0}\\
	&  ~+\frac{\binom{2r}{r}}{2^{2n-1}}\sum_{k=0}^{n-r-1}\left(  -1\right)
	^{n-r-k}\binom{2\left(  n-r\right)  }{k}\left(  n-r-k\right)  ^{j}\cos\left(
	\left(  n-r-k\right)  \beta-\frac{j\pi}{2}\right) \nonumber\\
	&  ~+\frac{1}{2^{2n-1}}\sum_{k=0}^{n-r-1}\sum_{\ell=0}^{r-1}\left(  -1\right)
	^{n-k-\ell}\binom{2\left(  n-r\right)  }{k}\binom{2r}{\ell}\nonumber\\
	&  ~\cdot\left(  \left(  n-k-\ell\right)  ^{j}+\left(  n-k-2r+\ell\right)
	^{j}\right)  \cos\left(  \left(  n-r-k\right)  \beta-\frac{j\pi}{2}\right)
	,\nonumber
	\end{align}
	for all $r=0,1,\ldots,n$. At the same time $b_{2n,i}^{\left(  j\right)
	}\left(  \beta\right)  =\left(  -1\right)  ^{j}b_{2n,2n-i}^{\left(  j\right)
}\left(  0\right)  ,~i=0,1,\ldots,2n,~j=0,1,\ldots,n.$
\end{theorem}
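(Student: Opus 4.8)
The plan is to reduce everything to one finite Fourier expansion and then to fold it. Writing $\mathbf{i}$ for the imaginary unit (the symbol $i$ being already in use as a running index), Euler's formula $\sin\theta=\frac{1}{2\mathbf{i}}\left(e^{\mathbf{i}\theta}-e^{-\mathbf{i}\theta}\right)$ gives, after multiplying out the two powers in (\ref{eq:trigonometric_B-basis-n}),
\[
\sin^{2n-i}\!\left(\tfrac{\beta-u}{2}\right)\sin^{i}\!\left(\tfrac{u}{2}\right)
=\frac{1}{(2\mathbf{i})^{2n}}\sum_{k=0}^{2n-i}\sum_{\ell=0}^{i}(-1)^{k+\ell}\binom{2n-i}{k}\binom{i}{\ell}\,e^{\mathbf{i}\frac{(2n-i-2k)\beta}{2}}\,e^{\mathbf{i}u\left(i-n+k-\ell\right)},
\]
so that $b_{2n,i}(u)$ is a trigonometric polynomial in $u$ whose only $u$-dependence sits in the exponentials $e^{\mathbf{i}u\omega}$ with integer frequencies $\omega=i-n+k-\ell$. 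Differentiating $j$ times with respect to $u$ multiplies each term by $(\mathbf{i}\omega)^{j}=\mathbf{i}^{j}\omega^{j}$, setting $u=0$, substituting $(2\mathbf{i})^{2n}=(-1)^{n}2^{2n}$, and keeping the real part (which is legitimate since $b_{2n,i}^{(j)}(0)\in\mathbb{R}$ and the prefactor is real) then yields the master identity
\[
b_{2n,i}^{(j)}(0)=\frac{(-1)^{n}c_{2n,i}^{\beta}}{2^{2n}}\sum_{k=0}^{2n-i}\sum_{\ell=0}^{i}(-1)^{k+\ell}\binom{2n-i}{k}\binom{i}{\ell}\,\omega^{j}\cos\!\left(\frac{(2n-i-2k)\beta}{2}+\frac{j\pi}{2}\right),\qquad j\ge 1 .
\]

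The second and decisive step is to fold this double sum. Under the involution $\sigma\colon(k,\ell)\mapsto(2n-i-k,\,i-\ell)$ the two binomials and $(-1)^{k+\ell}$ are invariant, while $\omega\mapsto-\omega$ and the cosine argument $\tfrac{(2n-i-2k)\beta}{2}\mapsto-\tfrac{(2n-i-2k)\beta}{2}$; combining a $\sigma$-orbit and using $\cos\!\big(\theta+\tfrac{j\pi}{2}\big)=(-1)^{j}\cos\!\big(\theta-\tfrac{j\pi}{2}\big)$ turns the phase into $\cos\!\big(\theta-\tfrac{j\pi}{2}\big)$ and replaces $\omega^{j}$ by $(-\omega)^{j}$, which accounts for the powers $(n-k-2r+\ell-1)^{j}$ (odd index $i=2r+1$), resp. $(n-k-2r+\ell)^{j}$ (even index $i=2r$). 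A second, independent regrouping $\ell\leftrightarrow i-\ell$ with $k$ held fixed, together with the sign flip $(-1)^{\ell}\mapsto(-1)^{i-\ell}$ it induces, produces the sign-alternating differences, resp. sums, of $j$-th powers in (\ref{eq:trigonometric_B-basis_n_odd_0})--(\ref{eq:trigonometric_B-basis_n_even_0}). Here the parity of $i$ must be treated separately: for $i=2r+1$ both $2n-i$ and $i$ are odd, every orbit has size two, the sum folds cleanly, the resulting factor $2$ against $2^{2n}$ gives $2^{2n-1}$, and collecting $(-1)^{n}$ with $(-1)^{k+\ell}$ and the sign incurred when the two $j$-th powers are interchanged reproduces the exponent $(-1)^{n+1-k-\ell}$ and hence (\ref{eq:trigonometric_B-basis_n_odd_0}); for $i=2r$ the two maps fix $k=n-r$, resp. $\ell=r$, and splitting the index square into the generic block ($k<n-r$, $\ell<r$), the two edge blocks ($k=n-r$ with $\ell$ regrouped, and $\ell=r$ with $k$ folded) and the single corner $(k,\ell)=(n-r,r)$ — which contributes nothing because there $\omega=0$ and $j\ge 1$ — reproduces the three summands of (\ref{eq:trigonometric_B-basis_n_even_0}); the edge blocks inherit the central binomial coefficients $\binom{2n-2r}{n-r}$ and $\binom{2r}{r}$ from the fixed index, and the bare $\cos\tfrac{j\pi}{2}$ of the first summand comes from the vanishing of $\tfrac{(2n-i-2k)\beta}{2}$ at $k=n-r$, together with the elementary identity $(-1)^{j}\cos\tfrac{j\pi}{2}=\cos\tfrac{j\pi}{2}$.

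Finally, the relation $b_{2n,i}^{(j)}(\beta)=(-1)^{j}b_{2n,2n-i}^{(j)}(0)$ is immediate from the explicit form (\ref{eq:trigonometric_B-basis-n}): since $c_{2n,i}^{\beta}=c_{2n,2n-i}^{\beta}$, the substitution $u\mapsto\beta-u$ shows $b_{2n,i}(\beta-u)\equiv b_{2n,2n-i}(u)$ on $[0,\beta]$, and differentiating this identity $j$ times and evaluating at $u=0$ gives the claim.

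I expect the only genuine obstacle to be the sign bookkeeping in the folding step — in particular, verifying that after absorbing the overall $(-1)^{n}$, the various $(-1)^{k+\ell}$-type factors, and the signs picked up from the two regroupings, one lands exactly on the exponents $(-1)^{n+1-k-\ell}$, $(-1)^{r-\ell}$, $(-1)^{n-r-k}$ and $(-1)^{n-k-\ell}$ of the statement, and that the edge blocks of the even case come out with the stated central binomial coefficients and the stated ranges of summation.
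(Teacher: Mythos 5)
Your proposal is correct, and I checked the folding in detail: the involution $\sigma\colon(k,\ell)\mapsto(2n-i-k,\,i-\ell)$ together with the reflection $\ell\mapsto i-\ell$ does produce orbit sums $2\bigl((n-k-2r+\ell-1)^{j}-(n-k-\ell)^{j}\bigr)\cos\bigl(\theta_k-\tfrac{j\pi}{2}\bigr)$ in the odd case and $2\bigl((n-k-\ell)^{j}+(n-k-2r+\ell)^{j}\bigr)\cos\bigl(\theta_k-\tfrac{j\pi}{2}\bigr)$ in the generic even block, and after absorbing the prefactor $(-1)^{n}/2^{2n}$ one lands exactly on the exponents $(-1)^{n+1-k-\ell}$, $(-1)^{r-\ell}$, $(-1)^{n-r-k}$, $(-1)^{n-k-\ell}$ of (\ref{eq:trigonometric_B-basis_n_odd_0})--(\ref{eq:trigonometric_B-basis_n_even_0}); the edge blocks $k=n-r$ and $\ell=r$ and the vanishing corner $(n-r,r)$ give the remaining summands with the stated central binomial coefficients. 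The paper performs the same underlying computation but organizes it differently: it quotes the real power-reduction identities for $\sin^{2r+1}$ and $\sin^{2r}$ (i.e., it folds each factor of (\ref{eq:trigonometric_B-basis-n}) \emph{before} multiplying), then multiplies the two single sums, converts products of sines into cosines of sums/differences, and differentiates term by term; you instead expand both factors fully via $\sin\theta=\tfrac{1}{2\mathbf{i}}(e^{\mathbf{i}\theta}-e^{-\mathbf{i}\theta})$ and fold the resulting double sum \emph{after} differentiation, with the odd/even dichotomy emerging from the fixed-point structure of the folding maps. What your route buys is self-containedness and uniformity: one master identity covers both parities, the even case (which the paper dismisses with ``one can proceed analogously'') is treated explicitly through the block decomposition, and you also supply the short symmetry argument $b_{2n,i}(\beta-u)\equiv b_{2n,2n-i}(u)$ for the relation $b_{2n,i}^{(j)}(\beta)=(-1)^{j}b_{2n,2n-i}^{(j)}(0)$, which the paper asserts without proof. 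What the paper's route buys is shorter sign bookkeeping, since the quoted identities already carry the halved summation ranges and the alternating signs in the form needed for the final formulas.
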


\begin{example}
	[Second order trigonometric polynomials]\label{exmp:trigonometric} Consider
	the ordinary basis
	\begin{align*}
	\mathcal{F}_{4}^{0,\beta}=  &  ~\left\{  \varphi_{4,0}\left(  u\right)
	=1,~\varphi_{4,1}\left(  u\right)  =\sin\left(  u\right)  ,~\varphi
	_{4,2}\left(  u\right)  =\cos\left(  u\right)  \right.  ,\\
	&  ~\left.  \varphi_{4,3}\left(  u\right)  =\sin\left(  2u\right)
	,~\varphi_{4,4}\left(  u\right)  =\cos\left(  2u\right)  :u\in\left[
	0,\beta\right]  \right\}  ,~\beta\in\left(  0,\pi\right)
	\end{align*}
	of the vector space of trigonometric polynomials of order at most two (or
	degree $4$) and its normalized B-basis%
	\begin{equation}
	\mathcal{B}_{4}^{0,\beta}=\left\{  b_{4,i}\left(  u\right)  =c_{4,i}^{\beta
	}\sin^{4-i}\left(  \frac{\beta-u}{2}\right)  \sin^{i}\left(  \frac{u}%
	{2}\right)  :u\in\left[  0,\beta\right]  \right\}  _{i=0}^{4},
	\label{eq:trigonometric_B-basis}%
	\end{equation}
	where 
	\begin{footnotesize}
		$
		c_{4,0}^{\beta}=c_{4,4}^{\beta}=\frac{1}{\sin^{4}\left(  \frac{\beta}%
			{2}\right)  },~c_{4,1}^{\beta}=c_{4,3}^{\beta}=\frac{4\cos\left(  \frac{\beta
			}{2}\right)  }{\sin^{4}\left(  \frac{\beta}{2}\right)  },~c_{4,2}^{\beta
		}=\frac{2+4\cos^{2}\left(  \frac{\beta}{2}\right)  }{\sin^{4}\left(
		\frac{\beta}{2}\right)  },
	$
\end{footnotesize}
\begin{align*}
&
\begin{footnotesize}%
\begin{array}
[c]{lcrlcllcllcllcr}%
b_{4,0}\left(  0\right)  & = & 1, & b_{4,1}\left(  0\right)  & = & 0, &
b_{4,2}\left(  0\right)  & = & 0, & b_{4,3}\left(  0\right)  & = & 0, &
b_{4,4}\left(  0\right)  & = & 0,\\
b_{4,0}\left(  \beta\right)  & = & 0, & b_{4,1}\left(  \beta\right)  & = &
0, & b_{4,2}\left(  \beta\right)  & = & 0, & b_{4,3}\left(  \beta\right)  &
= & 0, & b_{4,4}\left(  \beta\right)  & = & 1,
\\
\\
&& & b_{4,1}^{\left(  1\right)  }\left(  0\right)  &= & \frac{c_{4,1}^{\beta}}%
{2}\sin^{3}\left(  \frac{\beta}{2}\right)  , & b_{4,2}^{\left(  1\right)
}\left(  0\right)  &= & 0, & b_{4,3}^{\left(  1\right)  }\left(  0\right)  &= &
0,\\
&&& b_{4,1}^{\left(  1\right)  }\left(  \beta\right)  &= & 0, & b_{4,2}^{\left(
	1\right)  }\left(  \beta\right)  &= & 0, & b_{4,3}^{\left(  1\right)  }\left(
\beta\right)  &= & -\frac{c_{4,3}^{\beta}}{2}\sin^{3}\left(  \frac{\beta}%
{2}\right),
\\
\\
&&& b_{4,1}^{\left(  2\right)  }\left(  0\right)  &= & -\frac{3c_{4,1}^{\beta}}%
{4}\sin\left(  \frac{\beta}{2}\right)  \sin\left(  \beta\right)  , &
b_{4,2}^{\left(  2\right)  }\left(  0\right)  &= & \frac{c_{4,2}^{\beta}}%
{2}\sin^{2}\left(  \frac{\beta}{2}\right)  , & b_{4,3}^{\left(  2\right)
}\left(  0\right)  &= & 0,\\
&&& b_{4,1}^{\left(  2\right)  }\left(  \beta\right)  &= & 0, & b_{4,2}^{\left(
	2\right)  }\left(  \beta\right)  &= & \frac{c_{4,2}^{\beta}}{2}\sin^{2}\left(
\frac{\beta}{2}\right)  , & b_{4,3}^{\left(  2\right)  }\left(  \beta\right)
&= & -\frac{3c_{4,3}^{\beta}}{4}\sin\left(  \frac{\beta}{2}\right)  \sin\left(
\beta\right)
\end{array}%
\end{footnotesize}%
\end{align*}
and%
\[%
\begin{footnotesize}%
\begin{array}
[c]{lllllllllllllll}%
\varphi_{4,0}\left(  0\right)  & = & 1, & \varphi_{4,1}\left(  0\right)  & = &
0, & \varphi_{4,2}\left(  0\right)  & = & 1, & \varphi_{4,3}\left(  0\right)
& = & 0, & \varphi_{4,4}\left(  0\right)  & = & 1,\\
\varphi_{4,0}\left(  \beta\right)  & = & 1, & \varphi_{4,1}\left(
\beta\right)  & = & \sin\left(  \beta\right)  , & \varphi_{4,2}\left(
\beta\right)  & = & \cos\left(  \beta\right)  , & \varphi_{4,3}\left(
\beta\right)  & = & \sin\left(  2\beta\right)  , & \varphi_{4,4}\left(
\beta\right)  & = & \cos\left(  2\beta\right)  ,\\
&  &  &  &  &  &  &  &  &  &  &  &  &  & \\
\varphi_{4,0}^{\left(  1\right)  }\left(  0\right)  & = & 0, & \varphi
_{4,1}^{\left(  1\right)  }\left(  0\right)  & = & 1, & \varphi_{4,2}^{\left(
	1\right)  }\left(  0\right)  & = & 0, & \varphi_{4,3}^{\left(  1\right)
}\left(  0\right)  & = & 2, & \varphi_{4,4}^{\left(  1\right)  }\left(
0\right)  & = & 0,\\
\varphi_{4,0}^{\left(  1\right)  }\left(  \beta\right)  & = & 0, &
\varphi_{4,1}^{\left(  1\right)  }\left(  \beta\right)  & = & \cos\left(
\beta\right)  , & \varphi_{4,2}^{\left(  1\right)  }\left(  \beta\right)  &
= & -\sin\left(  \beta\right)  , & \varphi_{4,3}^{\left(  1\right)  }\left(
\beta\right)  & = & 2\cos\left(  2\beta\right)  , & \varphi_{4,4}^{\left(
	1\right)  }\left(  \beta\right)  & = & -2\sin\left(  2\beta\right)  ,\\
&  &  &  &  &  &  &  &  &  &  &  &  &  & \\
\varphi_{4,0}^{\left(  2\right)  }\left(  0\right)  & = & 0, & \varphi
_{4,1}^{\left(  2\right)  }\left(  0\right)  & = & 0, & \varphi_{4,2}^{\left(
	2\right)  }\left(  0\right)  & = & -1, & \varphi_{4,3}^{\left(  2\right)
}\left(  0\right)  & = & 0, & \varphi_{4,4}^{\left(  2\right)  }\left(
0\right)  & = & -4,\\
\varphi_{4,0}^{\left(  2\right)  }\left(  \beta\right)  & = & 0, &
\varphi_{4,1}^{\left(  2\right)  }\left(  \beta\right)  & = & -\sin\left(
\beta\right)  , & \varphi_{4,2}^{\left(  2\right)  }\left(  \beta\right)  &
= & -\cos\left(  \beta\right)  , & \varphi_{4,3}^{\left(  2\right)  }\left(
\beta\right)  & = & -4\sin\left(  2\beta\right)  , & \varphi_{4,4}^{\left(
	2\right)  }\left(  \beta\right)  & = & -4\cos\left(  2\beta\right)  .
\end{array}%
\end{footnotesize}%
\]
Substituting for $n=4$ and $\alpha=0$ the derivatives above into identities (\ref{eq:first_half}) and (\ref{eq:last_half}), one obtains
the transformation matrix%
\begin{equation}
\left[  t_{i,j}^{4}\right]  _{i=0,~j=0}^{4,~4}=~%
\begin{footnotesize}%
\def\arraystretch{1.75}%
\left[
\begin{array}
[c]{ccccc}%
1 & 1 & 1 & 1 & 1\\
0 & \frac{1}{2}\tan\left(  \frac{\beta}{2}\right)  & \frac{3\sin\left(
	\beta\right)  }{2+4\cos^{2}\left(  \frac{\beta}{2}\right)  } & \sin\left(
\beta\right)  -\frac{1}{2}\cos\left(  \beta\right)  \tan\left(  \frac{\beta
}{2}\right)  & \sin\left(  \beta\right) \\
1 & 1 & \frac{3\left(  1+\cos\left(  \beta\right)  \right)  }{2+4\cos
	^{2}\left(  \frac{\beta}{2}\right)  } & \cos\left(  \beta\right)  \fcolorbox{white}{Lavender!15}{\DPurple{$\boldsymbol{+}$}}\frac{1}%
{2}\sin\left(  \beta\right)  \tan\left(  \frac{\beta}{2}\right)  & \cos\left(
\beta\right) \\
0 & \tan\left(  \frac{\beta}{2}\right)  & \frac{6\sin\left(  \beta\right)
}{2+4\cos^{2}\left(  \frac{\beta}{2}\right)  } & \sin\left(  2\beta\right)
-\cos\left(  2\beta\right)  \tan\left(  \frac{\beta}{2}\right)  & \sin\left(
2\beta\right) \\
1 & 1 & \frac{6\cos\left(  \beta\right)  }{2+4\cos^{2}\left(  \frac{\beta}%
	{2}\right)  } & \cos\left(  2\beta\right)  \fcolorbox{white}{Lavender!15}{\DPurple{$\boldsymbol{+}$}}\sin\left(  2\beta\right)
\tan\left(  \frac{\beta}{2}\right)  & \cos\left(  2\beta\right)
\end{array}
\right],  %
\def\arraystretch{1.0}%
\end{footnotesize}
\label{eq:trigonometric_basis_transformation}%
\end{equation}
based on which Fig.\ \ref{fig:integral_and_rational_trigonometric_surfaces} shows control net
configurations for the exact description of patches of some integral and
rational trigonometric surfaces.
\end{example}

\begin{figure}
	[!h]
	\begin{center}
		\includegraphics[scale = 1]{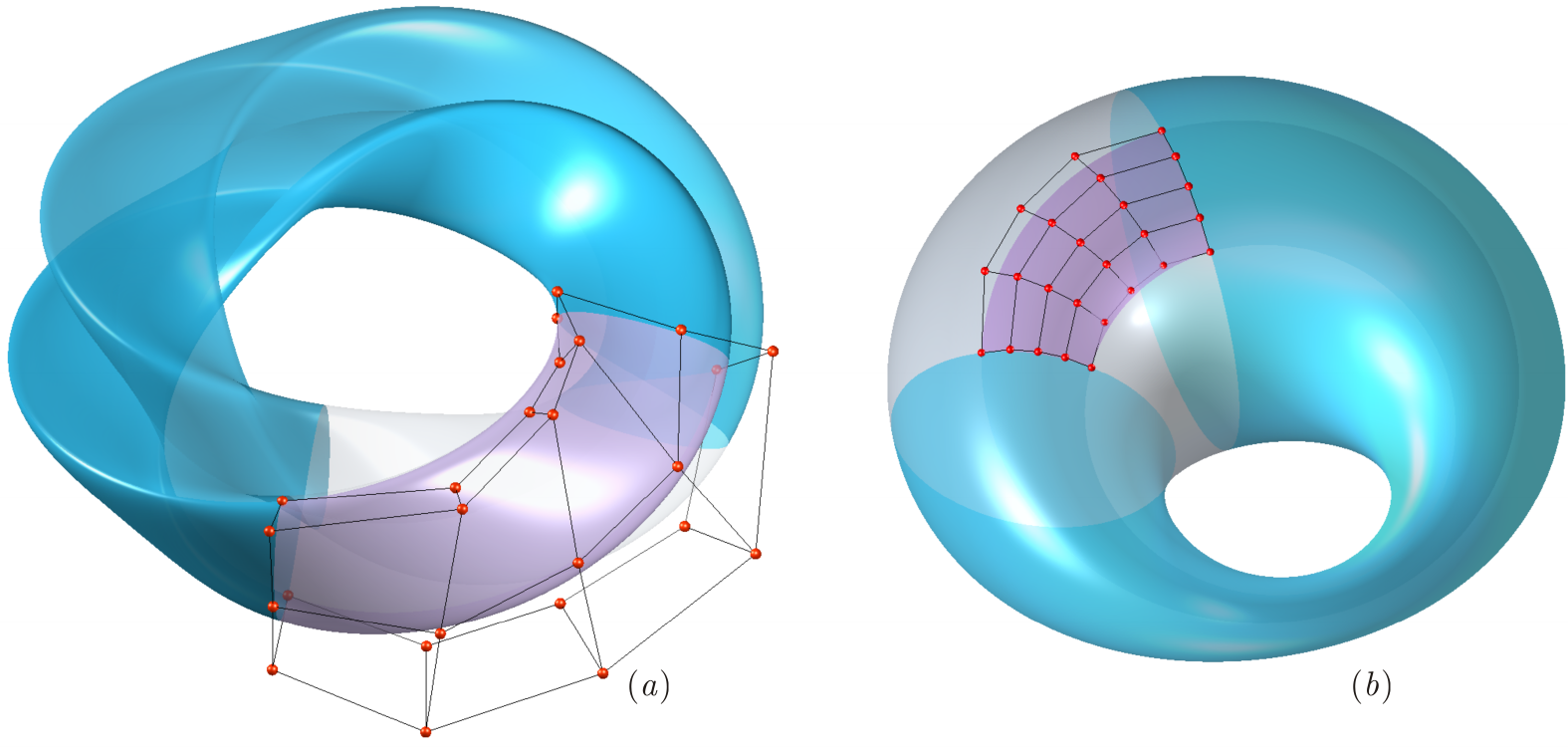}
		
		\caption{Control point configurations for the exact description of patches of
			(\textit{a}) a special integral variant of Alfred Gray's non-orientable Klein
			Bottle and of (\textit{b}) a rational ring Dupin cyclide. Besides the
			trigonometric basis transformation
			(\ref{eq:trigonometric_basis_transformation}), in cases (\textit{a}) and
			(\textit{b}) Theorem \ref{thm:integral_surfaces} and Remark
			\ref{rem:rational_surfaces} (i.e., the extension of Algorithm
			\ref{alg:ordinary_rational_curves}) were applied, respectively.}%
		
		\label{fig:integral_and_rational_trigonometric_surfaces}%
	\end{center}
\end{figure}

\vspace{-1.25cm}
\subsection{Hyperbolic polynomials\label{subsec:hyperbolic_polynomials}}

Now, let $\alpha = 0$ and $\beta>0$ be fixed parameters. Using hyperbolic sine and cosine
functions in expressions (\ref{eq:trigonometric_polynomials_n}%
)--(\ref{eq:trigonometric_normalizing_constants_n}) instead of the
trigonometric ones, we obtain the vector space of hyperbolic polynomials of
order at most $n$ (or degree $2n$) the unique normalized B-basis of which was
introduced in \cite{ShenWang2005}. In this case%
\begin{align*}
\varphi_{2n,0}^{\left(  j\right)  }\left(  u\right)   &  =0,~j=1,2,\ldots,n,\\
b_{2n,0}\left(  0\right)   &  =b_{2n,2n}\left(  \beta\right)  =1,~b_{2n,i}%
\left(  0\right)  =b_{2n,2n-i}\left(  \beta\right)  =0,~i=1,2,\ldots,n
\end{align*}
and%
\begin{align*}
\varphi_{2n,2i-1}^{\left(  j\right)  }\left(  0\right)   &  =\left\{
\begin{array}
[c]{ll}%
0, & j\left(  \operatorname{mod}2\right)  =0,\\
i^{j}, & j\left(  \operatorname{mod}2\right)  =1,
\end{array}
\right.  ~~\varphi_{2n,2i-1}^{\left(  j\right)  }\left(  \beta\right)
=\left\{
\begin{array}
[c]{ll}%
i^{j}\sinh\left(  i\beta\right)  , & j\left(  \operatorname{mod}2\right)
=0,\\
i^{j}\cosh\left(  i\beta\right)  , & j\left(  \operatorname{mod}2\right)  =1,
\end{array}
\right. \\
\varphi_{2n,2i}^{\left(  j\right)  }\left(  0\right)   &  =\left\{
\begin{array}
[c]{ll}%
i^{j}, & j\left(  \operatorname{mod}2\right)  =0,\\
0, & j\left(  \operatorname{mod}2\right)  =1,
\end{array}
\right.  ,~~\varphi_{2n,2i}^{\left(  j\right)  }\left(  \beta\right)
=\left\{
\begin{array}
[c]{ll}%
i^{j}\cosh\left(  i\beta\right)  , & ~j\left(  \operatorname{mod}2\right)
=0,\\
i^{j}\sinh\left(  i\beta\right)  , & ~j\left(  \operatorname{mod}2\right)  =1
\end{array}
\right.
\end{align*}
for all $i,j\in\left\{  1,2,\ldots,n\right\}  $, while the higher order
derivatives $\{  b_{2n,i}^{\left(  j\right)  }\left(  0\right)
,~b_{2n,i}^{\left(  j\right)  }\left(  \beta\right)  \}  _{i=0,~j=1}%
^{2n,~n}$ are specified by the next theorem.

\begin{theorem}
	[Hyperbolic endpoint derivatives]\label{thm:hyperbolic_endpoint_derivatives}%
	For arbitrary derivative order $j=1,2,\ldots,n$, one has that%
	\[%
	\begin{footnotesize}%
	\frac{b_{2n,2r+1}^{\left(  j\right)  }\left(  0\right)  }{c_{2n,2r+1}^{\beta}%
	}=~\left\{
	\begin{array}
	[c]{ll}%
	\frac{1}{2^{2n-1}}%
	{\displaystyle\sum\limits_{k=0}^{n-r-1}}
	{\displaystyle\sum\limits_{\ell=0}^{r}}
	\left(  -1\right)  ^{n+\left(  \left(  n-r-1\right)  \operatorname{mod}%
		2\right)  +\left(  r\operatorname{mod}2\right)  -k-\ell-1}\binom{2\left(
		n-r-1\right)  +1}{k}\binom{2r+1}{\ell}\cdot & \\
	\cdot\left(  \left(  n-k-2r+\ell-1\right)  ^{j}-\left(  n-k-\ell\right)
	^{j}\right)  \cosh\left(  \left(  2\left(  n-k-r\right)  -1\right)
	\frac{\beta}{2}\right)  , & j\left(  \operatorname{mod}2\right)  =0,\\
	& \\
	\frac{1}{2^{2n-1}}%
	{\displaystyle\sum\limits_{k=0}^{n-r-1}}
	{\displaystyle\sum\limits_{\ell=0}^{r}}
	\left(  -1\right)  ^{n+\left(  \left(  n-r-1\right)  \operatorname{mod}%
		2\right)  +\left(  r\operatorname{mod}2\right)  -k-\ell}\binom{2\left(
		n-r-1\right)  +1}{k}\binom{2r+1}{\ell}\cdot & \\
	\cdot\left(  \left(  n-k-2r+\ell-1\right)  ^{j}-\left(  n-k-\ell\right)
	^{j}\right)  \sinh\left(  \left(  2\left(  n-k-r\right)  -1\right)
	\frac{\beta}{2}\right)  , & j\left(  \operatorname{mod}2\right)  =1,
	\end{array}
	\right.
	\end{footnotesize}%
	\]
for all $r=0,1,\ldots,n-1$, while%
\begin{align*}
&  ~%
\begin{footnotesize}%
\frac{b_{2n,2r}^{\left(  j\right)  }\left(  0\right)  }{c_{2n,2r}^{\beta}}=%
\end{footnotesize}%
\\
=  &  ~%
\begin{footnotesize}%
\left\{
\begin{array}
[c]{ll}%
\frac{\binom{2\left(  n-r\right)  }{n-r}\binom{2r}{r}}{2^{2n}}+\frac
{\binom{2\left(  n-r\right)  }{n-r}}{2^{2n-1}}%
{\displaystyle\sum\limits_{\ell=0}^{r+\left(  r\operatorname{mod}2\right)
		-1}}
\left(  -1\right)  ^{r+\left(  r\operatorname{mod}2\right)  -\ell}\binom
{2r}{\ell}2^{j}\left(  r-\ell\right)  ^{j} & \\
+\frac{\binom{2r}{r}}{2^{2n-1}}%
{\displaystyle\sum\limits_{k=0}^{\left(  n-r\right)  +\left(  \left(
		n-r\right)  \operatorname{mod}2\right)  -1}}
\left(  -1\right)  ^{\left(  n-r\right)  +\left(  \left(  n-r\right)
	\operatorname{mod}2\right)  -k}\binom{2\left(  n-r\right)  }{k}\left(
n-r-k\right)  ^{j}\cosh\left(  \left(  n-r-k\right)  \beta\right)  & \\
+\frac{1}{2^{2n-1}}%
{\displaystyle\sum\limits_{k=0}^{\left(  n-r\right)  +\left(  \left(
		n-r\right)  \operatorname{mod}2\right)  -1}}
~~%
{\displaystyle\sum\limits_{\ell=0}^{r+\left(  r\operatorname{mod}2\right)
		-1}}
\left(  -1\right)  ^{n+\left(  \left(  n-r\right)  \operatorname{mod}2\right)
	+\left(  r\operatorname{mod}2\right)  -k-\ell}\binom{2\left(  n-r\right)  }%
{k}\binom{2r}{\ell}\cdot & \\
\cdot\left(  \left(  n-k-3r+2\ell\right)  ^{j}+\left(  n+r-k-2\ell\right)
^{j}\right)  \cosh\left(  \left(  n-r-k\right)  \beta\right)  , & j\left(
\operatorname{mod}2\right)  =0,\\
& \\
& \\
\frac{\binom{2\left(  n-r\right)  }{n-r}\binom{2r}{r}}{2^{2n}}+\frac
{\binom{2\left(  n-r\right)  }{n-r}}{2^{2n-1}}%
{\displaystyle\sum\limits_{\ell=0}^{r+\left(  r\operatorname{mod}2\right)
		-1}}
\left(  -1\right)  ^{r+\left(  r\operatorname{mod}2\right)  -\ell}\binom
{2r}{\ell}2^{j}\left(  r-\ell\right)  ^{j}\sinh\left(  2\left(  r-\ell\right)
\beta\right)  & \\
+\frac{1}{2^{2n-1}}%
{\displaystyle\sum\limits_{k=0}^{\left(  n-r\right)  +\left(  \left(
		n-r\right)  \operatorname{mod}2\right)  -1}}
~~%
{\displaystyle\sum\limits_{\ell=0}^{r+\left(  r\operatorname{mod}2\right)
		-1}}
\left(  -1\right)  ^{n+\left(  \left(  n-r\right)  \operatorname{mod}2\right)
	+\left(  r\operatorname{mod}2\right)  -k-\ell}\binom{2\left(  n-r\right)  }%
{k}\binom{2r}{\ell}\cdot & \\
\cdot\left(  \left(  n+r-k-2\ell\right)  ^{j}-\left(  n-k-3r+2\ell\right)
^{j}\right)  \sinh\left(  2\beta\left(  r-\ell\right)  \right)  , & j\left(
\operatorname{mod}2\right)  =1
\end{array}
\right.
\end{footnotesize}%
\end{align*}
for all $r=1,2,\ldots,n$ and $b_{2n,i}^{\left(  j\right)  }\left(
\beta\right)  =\left(  -1\right)  ^{j}b_{2n,2n-i}^{\left(  j\right)  }\left(
0\right)  ,~i=0,1,\ldots,2n,~j=0,1,\ldots,n.$
\end{theorem}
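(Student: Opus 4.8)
Following the same scheme as in the trigonometric case, the plan is to write each $b_{2n,i}$ as a finite sum of exponentials, differentiate term by term, evaluate at the endpoint, and then repackage the outcome into real (hyperbolic) form by exploiting two index symmetries; the statement at $u=\beta$ is then a reflection.

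First I would record the closed form of the normalized B-basis, which by \cite{ShenWang2005} (the hyperbolic counterpart of the trigonometric basis) is $b_{2n,i}(u)=c_{2n,i}^{\beta}\sinh^{2n-i}\!\bigl(\tfrac{\beta-u}{2}\bigr)\sinh^{i}\!\bigl(\tfrac{u}{2}\bigr)$ with $c_{2n,i}^{\beta}=c_{2n,2n-i}^{\beta}>0$. Writing $\sinh x=\tfrac12(e^{x}-e^{-x})$ and expanding both powers binomially gives
\[
b_{2n,i}(u)=\frac{c_{2n,i}^{\beta}}{2^{2n}}\sum_{k=0}^{2n-i}\sum_{\ell=0}^{i}(-1)^{k+\ell}\binom{2n-i}{k}\binom{i}{\ell}\,e^{\left(n-\frac{i}{2}-k\right)\beta}\,e^{\left(i+k-\ell-n\right)u},
\]
so that differentiating $j$ times and putting $u=0$ merely pulls a factor $(i+k-\ell-n)^{j}$ out of each term; the only thing that keeps this closed double sum from being ``real and hyperbolic'' is that the $\beta$-exponent $n-\tfrac{i}{2}-k$ takes both signs.

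Next I would fold the sum using the reflections $k\mapsto 2n-i-k$ and $\ell\mapsto i-\ell$. The first preserves $\binom{2n-i}{k}$, negates both exponents, and turns $(-1)^{k}$ into $(-1)^{i}(-1)^{k}$; pairing $k$ with $2n-i-k$ therefore replaces $e^{(n-i/2-k)\beta}$ by $e^{(n-i/2-k)\beta}\pm e^{-(n-i/2-k)\beta}$, i.e.\ by $2\cosh$ or $2\sinh$ of $\bigl(2(n-k)-i\bigr)\tfrac{\beta}{2}$, the sign being forced by the parity of $j$ because the reflected term carries $(-(i+k-\ell-n))^{j}=(-1)^{j}(i+k-\ell-n)^{j}$. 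The second reflection sends the power base $i+k-\ell-n$ to $k+\ell-n$, so after also folding $\ell$ the two surviving bases are precisely the $\pm$-pair displayed in the statement, and again the parity of $j$ selects between ``$(\cdot)^{j}+(\cdot)^{j}$'' and ``$(\cdot)^{j}-(\cdot)^{j}$''. For odd $i=2r+1$ both index ranges have even cardinality and split cleanly into $0\le k\le n-r-1$ and $0\le\ell\le r$, producing the stated double sum with no leftover. For even $i=2r$ the range of $k$ has the unpaired centre $k=n-r$ (whose $\beta$-exponent vanishes, so no $\cosh/\sinh$ factor survives there) and that of $\ell$ the unpaired centre $\ell=r$; these give rise to the extra single sums, the exact upper summation limits and the ``$(\cdot\bmod 2)$'' corrections being dictated by the parity of $j$. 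Collecting the accumulated constant signs yields the ``$n+((n-r-1)\bmod 2)+(r\bmod 2)$'' and ``$n+((n-r)\bmod 2)+(r\bmod 2)$'' offsets in the exponents of $(-1)$, and relabelling $i=2r$, $i=2r+1$ gives the two displayed cases.

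Finally, the closed form together with $c_{2n,i}^{\beta}=c_{2n,2n-i}^{\beta}$ gives $b_{2n,i}(u)=b_{2n,2n-i}(\beta-u)$ on $[0,\beta]$; differentiating $j$ times and setting $u=\beta$ gives $b_{2n,i}^{(j)}(\beta)=(-1)^{j}b_{2n,2n-i}^{(j)}(0)$, the last claim. A shorter route to the same formulas is analytic continuation: since $\sinh x=-\mathrm{i}\sin(\mathrm{i}x)$ and the hyperbolic and trigonometric normalizing constants agree up to the factor $(-1)^{n}$ after the substitution $\beta\mapsto\mathrm{i}\beta$, one has $b_{2n,i}^{\mathrm{hyp}}(u;\beta)=b_{2n,i}^{\mathrm{trig}}(\mathrm{i}u;\mathrm{i}\beta)$; differentiating and feeding in the already-established trigonometric endpoint-derivative formulas, with $\cos(\mathrm{i}\theta-\tfrac{j\pi}{2})=\cosh\theta\cos\tfrac{j\pi}{2}+\mathrm{i}\sinh\theta\sin\tfrac{j\pi}{2}$ and the bookkeeping of the powers $\mathrm{i}^{j}$, reproduces everything (the $\cosh$-versus-$\sinh$ dichotomy in $j$ falls out of this identity automatically). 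Either way, the main obstacle is the folding step --- keeping the signs and index parities straight and, above all, isolating the unpaired central contributions correctly in the even-$i$ case; this is pure but genuinely delicate combinatorics, the rest being the routine binomial expansion of $\sinh$.
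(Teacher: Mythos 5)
Your proposal is correct and follows essentially the same route as the paper's own (very brief) proof: the paper likewise reduces each hyperbolic $b_{2n,i}$ to a finite linear combination of $\cosh$/$\sinh$ of multiple arguments --- by quoting the hyperbolic power-reduction identities for $\sinh^{2r+1}$, $\sinh^{2r}$ and the product-to-sum identity for $\cosh$, mirroring the trigonometric proof --- then differentiates term by term at $u=0$ and obtains the $u=\beta$ values from the reflection $b_{2n,i}(u)=b_{2n,2n-i}(\beta-u)$. Your exponential expansion of $\sinh$ followed by the two index reflections (with the unpaired centre terms in the even-index case) is exactly an inline derivation of those same identities, so the two arguments coincide in substance; your analytic-continuation remark ($u\mapsto\mathbf{i}u$, $\beta\mapsto\mathbf{i}\beta$ applied to the trigonometric formulas) is a legitimate alternative shortcut, but it is only an aside.
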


\begin{example}
	[Second order hyperbolic polynomials]\label{exmp:hyperbolic}Using hyperbolic
	sine, cosine and tangent functions instead of the trigonometric ones that
	appear in Example \ref{exmp:trigonometric} and applying the second order
	hyperbolic normalized B-basis \cite{ShenWang2005} with the shape parameter
	$\beta>0$, one can easily construct the hyperbolic counterpart%
	\begin{equation}
	\left[  t_{i,j}^{4}\right]  _{i=0,~j=0}^{4,~4}=%
	\begin{footnotesize}%
	\def\arraystretch{1.75}%
	\left[
	\begin{array}
	[c]{ccccc}%
	1 & 1 & 1 & 1 & 1\\
	0 & \frac{1}{2}\tanh\left(  \frac{\beta}{2}\right)  & \frac{3\sinh\left(
		\beta\right)  }{2+4\cosh^{2}\left(  \frac{\beta}{2}\right)  } & \sinh\left(
	\beta\right)  -\frac{1}{2}\cosh\left(  \beta\right)  \tanh\left(  \frac{\beta
	}{2}\right)  & \sinh\left(  \beta\right) \\
	1 & 1 & \frac{3\left(  1+\cosh\left(  \beta\right)  \right)  }{2+4\cosh
		^{2}\left(  \frac{\beta}{2}\right)  } & \cosh\left(  \beta\right) \fcolorbox{white}{Lavender!15}{\DPurple{$\boldsymbol{-}$}} \frac
	{1}{2}\sinh\left(  \beta\right)  \tanh\left(  \frac{\beta}{2}\right)  &
	\cosh\left(  \beta\right) \\
	0 & \tanh\left(  \frac{\beta}{2}\right)  & \frac{6\sinh\left(  \beta\right)
	}{2+4\cosh^{2}\left(  \frac{\beta}{2}\right)  } & \sinh\left(  2\beta\right)
	-\cosh\left(  2\beta\right)  \tanh\left(  \frac{\beta}{2}\right)  &
	\sinh\left(  2\beta\right) \\
	1 & 1 & \frac{6\cosh\left(  \beta\right)  }{2+4\cosh^{2}\left(  \frac{\beta
		}{2}\right)  } & \cosh\left(  2\beta\right)  \fcolorbox{white}{Lavender!15}{\DPurple{$\boldsymbol{-}$}}\sinh\left(  2\beta\right)
	\tanh\left(  \frac{\beta}{2}\right)  & \cosh\left(  2\beta\right)
	\end{array}
	\right]
	\def\arraystretch{1.0}%
	\end{footnotesize}
	\label{eq:hyperbolic_basis_transformation}%
	\end{equation}
	of the trigonometric basis transformation
	(\ref{eq:trigonometric_basis_transformation}), the structurally difference of which consists in the highlighted operators.
\end{example}


\subsection{A class of mixed spaces\label{subsec:mixed}}

In order to be as self-contained as possible,
we recall the construction process \cite{CarnicerMainarPena2004} of the normalized B-bases for a
family of mixed EC vector spaces of functions. 

Let $\alpha=0$ and $\beta > 0$ be fixed parameters and consider the homogeneous
linear differential equation%
\begin{equation}
\sum_{i=0}^{n+1}\gamma_{i}v^{\left(  i\right)  }\left(  u\right)
=0,~\gamma_{i}\in%
\mathbb{R}
,~u\in\left[  0,\beta\right]  \label{eq:differential_equation}%
\end{equation}
of order $n+1$ with constant coefficients and assume that its characteristic polynomial $p_{n+1}\left(r\right),~r\in\mathbb{C}$ is an either even or odd function %
such that $r=0$ is one of its (presumably higher order) zeros.
Hereafter we assume that the ordinary basis (\ref{eq:ordinary_basis})
corresponds to the system of those linearly independent functions that are
implied by all (higher order) zeros of $p_{n+1}$,
i.e., $\mathbb{S}_{n}^{0,\beta}$ is the $\left(  n+1\right)  $-dimensional
vector space of functions that is formed by all solutions of (\ref{eq:differential_equation}). Under these
conditions, $1\in\mathbb{S}_{n}^{0,\beta}$, moreover the space $\mathbb{S}%
_{n}^{0,\beta}$ is also invariant under reflections and consequently under
translations as well, i.e., for any function $f\in\mathbb{S}_{n}^{0,\beta}$ and
fixed scalar $\tau\in%
\mathbb{R}
$ the functions $g_{\tau}\left(  u\right)  :=f\left(  \tau-u\right)  $ and
$h_{\tau}\left(  u\right)  :=f\left(  u-\tau\right)  $ also belong to
$\mathbb{S}_{n}^{0,\beta}$.

Following \cite{CarnicerMainarPena2004}, one can both to determine (or at least to numerically approximate) the range of the shape parameter $\beta > 0$ for which $\mathbb{S}_{n}^{0,\beta}$ is an EC space and to construct its normalized B-basis as follows. Denote by%
\begin{equation}
W_{\left[  v_{n,0},v_{n,1},\ldots,v_{n,n}\right]  }\left(  u\right)  :=
\left[
v_{n,i}^{\left(j\right)}\left(u\right)
\right]_{i=0,~j=0}^{n,~n}
\label{eq:forward_Wronskian}
\end{equation}
the Wronskian matrix of those particular integrals
\begin{equation}
v_{n,i}:=\sum_{j=0}^{n}\rho_{i,j}\varphi_{n,j}\in\mathbb{S}_{n}^{0,\beta
},~i=0,1,\ldots,n\label{eq:particular_integrals}%
\end{equation}
of (\ref{eq:differential_equation}) that correspond to the initial conditions%
\begin{equation}
\left\{
\begin{array}{rcl}
v_{n,i}^{\left(  j\right)  }\left(  0\right)    & = & 0,~j=0,\ldots,i-1,\\
v_{n,i}^{\left(  i\right)  }\left(  0\right)    & = & 1,\\
v_{n,i}^{\left(  j\right)  }\left(  \beta\right)    & =& 0 ,~j=0,\ldots,n-1-i,
\end{array}
\right.
\label{eq:initial_conditions}
\end{equation}
i.e., the system $\left\{  v_{n,i}\left(  u\right)  :u\in\left[
0,\beta\right]  \right\}  _{i=0}^{n}$ is a bicanonical basis on the interval $\left[0, \beta\right]$ such that the Wronksian (\ref{eq:forward_Wronskian}) at $u=0$ is a
lower triangular matrix with positive (unit) diagonal entries.

Consider the functions (or Wronskian determinants)%
\begin{equation}
w_{n,i}\left(  u\right)  :=\det W_{\left[  v_{n,i},v_{n,i+1},\ldots
	,v_{n,n}\right]  }\left(  u\right)  ,~i=\left\lfloor \frac{n}{2}\right\rfloor
,\left\lfloor \frac{n}{2}\right\rfloor +1,\ldots
,n,\label{eq:Wronskian_determinants}%
\end{equation}
define the critical length%
\begin{equation}
\beta_{n}^{\star}:=\min_{i=\left\lfloor \frac{n}{2}\right\rfloor ,\left\lfloor \frac{n}{2}\right\rfloor +1,\ldots
	,n}\min\left\{  \left\vert u\right\vert :w_{n,i}\left(  u\right)
=0,~u\neq0\right\}
\label{eq:critical_length}
\end{equation}
and, in what follows, assume that $\beta\in\left(  0,\beta_{n}^{\star}\right)
$ is an arbitrarily fixed shape parameter (we write $\beta_{n}^{\star
}=+\infty$ whenever the Wronskian determinants
(\ref{eq:Wronskian_determinants}) do not have non-zero real zeros). Under these conditions, $\mathbb{S}_{n}^{0,\beta}$ is a reflection and translation invariant EC space that also has a unique normalized B-basis, since $1 \in \mathbb{S}_{n}^{0,\beta}$.

Consider the Wronskian matrix $W_{\left[  v_{n,n},v_{n,n-1},\ldots
	,v_{n,0}\right]  }\left(  \beta\right)  $ of the reverse ordered system
$\{  v_{n,n-i}\left(  u\right)  :\allowbreak{}u\in\left[  0,\beta\right]  \}
_{i=0}^{n}$ at the parameter value $u=\beta$ and obtain its Doolittle factorization%
\[
L\cdot U=W_{\left[  v_{n,n},v_{n,n-1},\ldots,v_{n,0}\right]  }\left(
\beta\right),
\]
where $L$ is a lower triangular matrix with unit diagonal,
while $U$ is a non-singular upper triangular matrix. Calculate the inverse matrices%
\[
U^{-1}:=\left[
\begin{array}
[c]{cccc}%
\mu_{0,0} & \mu_{0,1} & \cdots & \mu_{0,n}\\
0 & \mu_{1,1} & \cdots & \mu_{1,n}\\
\vdots & \vdots & \ddots & \vdots\\
0 & 0 & \cdots & \mu_{n,n}%
\end{array}
\right]  ,~L^{-1}:=\left[
\begin{array}
[c]{cccc}%
\lambda_{0,0} & 0 & \cdots & 0\\
\lambda_{1,0} & \lambda_{1,1} & \cdots & 0\\
\vdots & \vdots & \ddots & \vdots\\
\lambda_{n,0} & \lambda_{n,1} & \cdots & \lambda_{n,n}%
\end{array}
\right]
\]
and construct the reflection invariant normalized B-basis%
\begin{equation}
\mathcal{B}_{n}^{0,\beta}=\left\{  b_{n,i}\left(  u\right)  =\lambda
_{n-i,0}\widetilde{b}_{n,i}\left(  u\right)  :u\in\left[  0,\beta\right]
\right\}  _{i=0}^{n}\label{eq:construction}%
\end{equation}
defined by%
\[
\left[
\begin{array}
[c]{cccc}%
\widetilde{b}_{n,n}\left(  u\right)   & \widetilde{b}_{n,n-1}\left(
u\right)   & \cdots & \widetilde{b}_{0}\left(
u\right)
\end{array}
\right]  :=\left[
\begin{array}
[c]{cccc}%
v_{n,n}\left(  u\right)   & v_{n,n-1}\left(  u\right)   & \cdots &
v_{n,0}\left(  u\right)
\end{array}
\right]  \cdot U^{-1}%
\]
and%
\[
\left[
\begin{array}
[c]{cccc}%
\lambda_{0,0} & \lambda_{1,0} & \cdots & \lambda_{n,0}%
\end{array}
\right]  ^{T}:=L^{-1}\cdot\left[
\begin{array}
[c]{cccc}%
1 & 0 & \cdots & 0
\end{array}
\right]  ^{T}.
\]
Since the EC space $\mathbb{S}_{n}^{0,\beta}$ is invariant under reflections,
one has that%
\[
b_{n,i}\left(  u\right)  =b_{n,n-i}\left(  \beta-u\right)  ,~i=0,1,\ldots
,\left\lfloor \frac{n}{2}\right\rfloor ,
\]
i.e., we only need to determine the half of the basis functions
(\ref{eq:construction}).

\begin{proposition}[Endpoint derivatives]
Assuming that the derivatives $\left\{  \varphi_{n,c}^{\left(  j\right)
}\left(  0\right)  ,\varphi_{n,c}^{\left(  j\right)  }\left(  \beta\right)
\right\}  _{c=0,~j=0}^{n,~n}$ are already known, one has to substitute the
parameter values $u=0$ and $u=\beta$ into the derivative formulas
\begin{align}
b_{n,n-i}^{\left(  j\right)  }\left(  u\right)    & =\lambda_{i,0}%
\widetilde{b}_{n,n-i}^{\left(  j\right)  }\left(  u\right)  \nonumber\\
& =\lambda_{i,0}\sum_{r=0}^{i}\mu_{r,i}v_{n,n-r}^{\left(  j\right)  }\left(
u\right)  \nonumber\\
& =\lambda_{i,0}\sum_{r=0}^{i}\mu_{r,i}\sum_{c=0}^{n}\rho_{n-r,c}\varphi
_{n,c}^{\left(  j\right)  }\left(  u\right)  ,~i=0,1,\ldots,\left\lfloor
\frac{n}{2}\right\rfloor ,\label{eq:mixed_b_derivatives}\\
& \nonumber\\
b_{n,i}^{\left(  j\right)  }\left(  u\right)    & =\left(  -1\right)
^{j}b_{n,n-i}^{\left(  j\right)  }\left(  \beta-u\right)  ,~i=0,1,\ldots
,\left\lfloor \frac{n}{2}\right\rfloor
\label{eq:mixed_b_symmetric_derivatives}
\end{align}
in order to determine the entries (\ref{eq:first_half}) and (\ref{eq:last_half}) of the transformation matrix that
maps the normalized B-basis (\ref{eq:construction}) of the (generally mixed) EC space $\mathbb{S}_{n}^{0,\beta}$ to its ordinary basis. 
\end{proposition}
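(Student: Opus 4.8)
The plan is to unwind the explicit construction (\ref{eq:construction}) of the normalized B-basis in three nested layers, differentiating each identity term by term -- which is legitimate because every object involved is a finite linear combination of the functions $\varphi_{n,c}\in C^{n}\left(\left[0,\beta\right]\right)$ -- and then to invoke reflection invariance to transfer the information from $u=0$ to $u=\beta$.

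First I would start from (\ref{eq:construction}), i.e.\ $b_{n,i}\left(u\right)=\lambda_{n-i,0}\,\widetilde{b}_{n,i}\left(u\right)$ for $i=0,1,\ldots,n$; replacing the index $i$ by $n-i$ and differentiating $j$ times gives $b_{n,n-i}^{\left(j\right)}\left(u\right)=\lambda_{i,0}\,\widetilde{b}_{n,n-i}^{\left(j\right)}\left(u\right)$, the first displayed line. Next I would expand $\widetilde{b}_{n,n-i}$: by definition the row vector $\left[\widetilde{b}_{n,n}\left(u\right)\ \widetilde{b}_{n,n-1}\left(u\right)\ \cdots\ \widetilde{b}_{n,0}\left(u\right)\right]$ equals $\left[v_{n,n}\left(u\right)\ v_{n,n-1}\left(u\right)\ \cdots\ v_{n,0}\left(u\right)\right]\cdot U^{-1}$, and since $U^{-1}=\left[\mu_{r,c}\right]_{r,c=0}^{n}$ is upper triangular ($\mu_{r,c}=0$ whenever $r>c$), reading off the $i$-th component of this product yields $\widetilde{b}_{n,n-i}\left(u\right)=\sum_{r=0}^{i}\mu_{r,i}v_{n,n-r}\left(u\right)$; differentiating $j$ times gives the second line. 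Finally, substituting the representation (\ref{eq:particular_integrals}) of the bicanonical functions, $v_{n,n-r}=\sum_{c=0}^{n}\rho_{n-r,c}\varphi_{n,c}$, and differentiating once more produces the third line, where the coefficients $\rho_{n-r,c}$ are exactly those obtained by solving the initial-value problem (\ref{eq:initial_conditions}) from the assumed known endpoint derivatives of the ordinary basis. For the last displayed line I would use that $\mathbb{S}_{n}^{0,\beta}$ is reflection invariant, so $b_{n,i}\left(u\right)=b_{n,n-i}\left(\beta-u\right)$ as recalled just above the statement; applying the chain rule $j$ times gives $b_{n,i}^{\left(j\right)}\left(u\right)=\left(-1\right)^{j}b_{n,n-i}^{\left(j\right)}\left(\beta-u\right)$. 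Evaluating all four identities at $u=0$ and $u=\beta$ then delivers the quantities $\{b_{n,i}^{\left(j\right)}\left(0\right),b_{n,i}^{\left(j\right)}\left(\beta\right)\}_{i=0,~j=0}^{n,~n}$ that feed into (\ref{eq:first_half}) and (\ref{eq:last_half}).

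I do not expect a genuine obstacle here: the proposition is a chain of substitutions into already-established formulas, and all the analytic content (existence and uniqueness of the normalized B-basis, total positivity, the Doolittle factorization, and the fact that $\beta\in\left(0,\beta_{n}^{\star}\right)$ guarantees the EC property) was secured in the construction preceding the statement. The only place asking for mild care is the index bookkeeping in the matrix product $\left[v_{n,n}\ \cdots\ v_{n,0}\right]\cdot U^{-1}$ -- namely matching the $i$-th column of the product with $\widetilde{b}_{n,n-i}$ and truncating the inner sum at $r=i$ by upper-triangularity of $U^{-1}$ -- together with the observation that differentiation commutes with these finite sums; both are routine. Hence the proof is essentially a verification that the stated formulas reproduce, layer by layer, the definitions in the construction.
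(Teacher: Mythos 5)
Your proposal is correct and coincides with the argument the paper intends: the proposition is exactly the layer-by-layer unwinding of the construction (\ref{eq:construction}), namely $b_{n,n-i}=\lambda_{i,0}\widetilde{b}_{n,n-i}$, the truncation $\widetilde{b}_{n,n-i}=\sum_{r=0}^{i}\mu_{r,i}v_{n,n-r}$ forced by the upper triangularity of $U^{-1}$, the expansion $v_{n,n-r}=\sum_{c=0}^{n}\rho_{n-r,c}\varphi_{n,c}$ from (\ref{eq:particular_integrals}), and the chain rule applied to the reflection identity $b_{n,i}\left(u\right)=b_{n,n-i}\left(\beta-u\right)$. Your index bookkeeping matches the paper's (as confirmed by the formulas (\ref{eq:5th_order_exp_trig_normalized_B_basis}) in the worked example), so nothing needs to be added.
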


From computational and algorithmic viewpoints, formulas (\ref{eq:mixed_b_derivatives})--(\ref{eq:mixed_b_symmetric_derivatives}) are significantly easier to both evaluate and implement for fixed values of the shape parameter $\beta$ than to calculate the general determinant based formulas (\ref{eq:general_derivative_0})--(\ref{eq:general_derivative_i}) or to differentiate the integral representation described e.g. in the special case \cite{MainarPena2010} and references therein.

\begin{example}
	[EC spaces generated by characteristic polynomials]Consider the characteristic
	polynomials %
	\begin{align*}
	p_{n+1}\left(  r\right)   
	=r^{n+1},~ 
	p_{\left(  n+1\right)  ^{2}}\left(  r\right)   
	=r^{n+1}\prod_{k=1}%
	^{n}\left(  r^{2}+\omega_{k}^{2}\right)  ^{n+1-k},~%
	p_{\left(  n+1\right)  ^{2}}\left(  r\right)  
	=r^{n+1}\prod_{k=1}^{n}\left(
	r^{2}-\omega_{k}^{2}\right)  ^{n+1-k}, 
	\end{align*}
	where parameters $\left\{  \omega_{k}\right\}  _{k=1}^{n}$ are pairwise
	distinct non-zero real numbers. In these cases, one has that for appropriately selected definition
	domains $\left[  0,\beta\right] $ the vector spaces%
	\begin{align*}
	\mathbb{P}_{n}^{0,\beta}&:=\left\langle\left\{  1,u,\ldots,u^{n}:u\in\left[
	0,\beta\right]  \right\}\right\rangle ,~\dim\mathbb{P}_{n}^{0,\beta}=n+1,\\
	\mathbb{AT}_{n\left(
		n+2\right)  }^{0,\beta}&  :=\mathbb{P}_n \cup \left\langle\left\{  u^{\ell}\cos\left(
	\omega_{k}u\right)  ,u^{\ell}\sin\left(  \omega_{k}u\right)  :u\in\left[
	0,\beta\right]  \right\}  _{k=1,~\ell=0}^{n,~n-k}\right\rangle,~\dim\mathbb{AT}_{n\left(
		n+2\right)  }^{0,\beta}=\left(  n+1\right)  ^{2},
	\end{align*}
	and%
	\[
	\mathbb{AH}_{n\left(
		n+2\right)  }^{0,\beta}:=\mathbb{P}_n \cup\left\langle\left\{  u^{\ell}\cosh\left(
	\omega_{k}u\right)  ,u^{\ell}\sinh\left(  \omega_{k}u\right)  :u\in\left[
	0,\beta\right]  \right\}  _{k=1,~\ell=0}^{n,~n-k}\right\rangle,~\dim\mathbb{AH}_{n\left(
		n+2\right)  }^{0,\beta}=\left(  n+1\right)  ^{2},
	\]
	respectively, are reflection invariant EC spaces that also possess unique normalized B-basis functions. As special cases, the vector spaces of trigonometric and hyperbolic polynomials of order at most $n$ correspond to the characteristic polynomials
	\[
	p_{2n+1}\left(r\right) = r \prod_{k = 1}^n \left(r^2 + k^2\right)
	\text{ and }~
	p_{2n+1}\left(r\right) = r \prod_{k = 1}^n \left(r^2 - k^2\right),
	\]
	respectively, i.e., $\omega_k = k$ for all $k=1,2,\ldots,n$. However, in these two latter cases it is much easier to apply Theorems \ref{thm:trigonometric_endpoint_derivatives} and \ref{thm:hyperbolic_endpoint_derivatives}, respectively, than to evaluate the required endpoint derivatives by means of formulas (\ref{eq:mixed_b_derivatives})-(\ref{eq:mixed_b_symmetric_derivatives}). Concerning the characteristic polynomial $p_{n+1}\left(r\right) = r^{n+1}$, Example \ref{exmp:polynomials} and Appendix \ref{sec:Bernstein_to_monomials} provide further details.
\end{example}

\begin{example}
	[Traditional polynomials]\label{exmp:polynomials}The system 
	$
	\left\{  b_{n,i}\left(  u\right)  =\binom{n}{i}%
	u^{i}\left(  1-u\right)  ^{n-i}:u\in\left[  0,1\right]  \right\}  _{i=0}^{n}%
	$
	of Bernstein polynomials of degree $n$ is the normalized B-basis of the
	EC space %
	$
	\left\langle \left\{  \varphi_{n,i}\left(  u\right)  =u^{i}:u\in\left[
	0,1\right]  \right\}  _{i=0}^{n}\right\rangle .
	$
	In this case one has that%
	\begin{align}
	\varphi_{n,i}^{\left(  j\right)  }\left(  0\right)    & =\left\{
	\begin{array}
	[c]{ll}%
	i!, & ~j=i,\\
	0, & ~j\neq i,
	\end{array}
	\right.  ~~\varphi_{n,i}^{\left(  j\right)  }\left(  1\right)  =\left\{
	\begin{array}
	[c]{ll}%
	\frac{i!}{\left(  i-j\right)  !}, & i\geq j\geq0,\\
	0, & i<j\leq n,
	\end{array}
	\right.  
	\label{eq:monomial_derivative}\\
	b_{n,i}^{\left(  j\right)  }\left(  0\right)    & =\left\{
	\begin{array}
	[c]{ll}%
	0, & ~i>j\geq0,\\
	\left(  -1\right)  ^{j-i}\cdot j!\cdot\binom{n}{j}\cdot\binom{j}{i}, & ~i\leq
	j\leq n,
	\end{array}
	\right.  ~~b_{n,i}^{\left(  j\right)  }\left(  1\right)  =\left(  -1\right)
	^{j}b_{n,n-i}^{\left(  j\right)  }\left(  0\right)  ,~i=0,1,\ldots,n
	\label{eq:Bernstein_derivative}
	\end{align}
	for all $j=0,1,\ldots,n$. 
	As it is proved in Appendix \ref{sec:Bernstein_to_monomials}, the substitution for $\alpha=0$ and $\beta=1$ of these derivatives
	into formulas (\ref{eq:first_half}) and (\ref{eq:last_half}) leads to the expected closed form of the classical transformation matrix of entries %
	\begin{equation}
	\def\arraystretch{1.75}
	t_{i,j}^{n}=\left\{
	\begin{array}
	[c]{cc}%
	\frac{\binom{j}{i}}{\binom{n}{i}}, & j=i,i+1,\ldots,n,\\
	0, & j=0,1,\ldots,i-1,
	\end{array}
	\right.
	\def\arraystretch{1.0}
	\label{eq:Bernstein_to_monomials}
	\end{equation}
	where $i=0,1,\ldots,n$.
\end{example}

Naturally, characteristic polynomials may also have (conjugate) complex roots of higher order multiplicity and with non-vanishing real and imaginary parts, which may lead to mixed (algebraic) exponential trigonometric EC spaces as it is illustrated in the next example.

\begin{example}[A 5-dimensional exponential trigonometric space]
	\label{exmp:exponential_trigonometric_space}
	Let $\omega > 0$ be a fixed parameter and consider the $5$th order homogeneous linear differential equation
	\begin{equation}
	v^{\left(5\right)}\left(u\right) - 2\left(\omega^2 - 1\right) v^{\left(3\right)}\left(u\right) + \left(\omega^2 + 1\right)^2 v^{\left(1\right)}\left(u\right) = 0
	\label{eq:5th_order_exp_trig}
	\end{equation}
	with the odd characteristic polynomial
	\begin{align*}
	p_5\left(r\right) &= r^5 - 2\left(\omega^2 - 1\right) r^3 + \left(\omega^2 + 1\right)^2 r 
	\\
	&= r\left(r - \left(-\omega - \mathbf{i}\right)\right)\left(r - \left(-\omega + \mathbf{i}\right)\right)\left(r - \left(\omega - \mathbf{i}\right)\right)\left(r - \left(\omega + \mathbf{i}\right)\right),
	\end{align*}
	where $\mathbf{i}=\sqrt{-1}$. It follows that the vector space formed by all solutions of (\ref{eq:5th_order_exp_trig}) can be spanned by the ordinary basis
	\begin{align}
	\mathcal{F}_{4}^{\beta} = &~
	\left\{
	\varphi_0\left(u\right) \equiv 1, \,
	\varphi_1\left(u\right) = e^{-\omega u} \cos\left(u\right),\,
	\varphi_2\left(u\right) = e^{-\omega u} \sin\left(u\right),\,
	\right.
	\label{eq:5th_order_exp_trig_ordinary_basis}\\
	&
	~~\left.
	\varphi_3\left(u\right) = e^{\omega u} \cos\left(u\right),\,
	\varphi_4\left(u\right) = e^{\omega u} \sin\left(u\right):
	u\in\left[0,\beta\right]
	\right\},~\beta \in \left(0,\beta_4^{\star}\right),
	\nonumber
	\end{align}
	where $\beta_4^{\star}$ denotes the corresponding special case of the critical length (\ref{eq:critical_length}). In order to avoid lengthy cumbersome formulations, in this case we provide only a numerical example the values of which can be verified by means of Listings \ref{lst:Doolittle} and \ref{lst:ExponentialTrigonometricSpace} of Appendix \ref{sec:implementation_details}. Assume that the growth rate $\omega = \frac{1}{3\pi}$ and the shape parameter $\beta = \frac{5\pi}{6}<\beta_4^{\star}$ are fixed. If one intends e.g.\ to represent the arc
	\begin{equation}
	\mathbf{c}\left(u\right) = 
	\left[
	\begin{array}{c}
	e^{\omega u} \cos\left(u\right)\\
	e^{\omega u} \sin\left(u\right)
	\end{array}
	\right]
	=
	\left[
	\begin{array}{c}
	1\\0
	\end{array}
	\right] \varphi_{3}\left(u\right) +
	\left[
	\begin{array}{c}
	0\\1
	\end{array}
	\right] \varphi_{4}\left(u\right),~ u \in \left[0,\beta\right]
	\label{eq:logarithmic_spiral}
	\end{equation}
	of a logarithmic spiral by means of the normalized B-basis $\left\{b_{4,i}\left(u\right):u\in\left[0,\beta\right]\right\}_{i=0}^4$ of the underlying reflection invariant EC space $\mathbb{S}_{4}^{0,\beta}$, then one has to construct the system (\ref{eq:construction}) as follows:
	\begin{itemize}[nolistsep]
		\item
		at first, one has to determine the transformation matrix
		\begin{footnotesize}
			\begin{equation*}
			\left[\rho_{i,j}\right]_{i=0,\,j=0}^{4,\,4}
			=
			\left[
			\begin{array}{rrrrr}
			0.8038 &  -0.7765 & -2.4061 &  0.9728 &  1.0761\\
			-1.2028 &   4.2007 &  2.2514 & -2.9979 & -0.4876\\
			1.4484 &  -4.8494 &  0.5805 &  3.4009 & -1.4558\\
			-1.1191 &   2.8895 & -2.3598 & -1.7704 &  2.8542\\
			0.9779 &  -0.4889 &  2.2781 & -0.4889 & -2.2781
			\end{array}
			\right]
			\end{equation*}
		\end{footnotesize}that maps the ordinary basis $\left\{\varphi_{4,i}\left(u\right):u\in\left[0,\beta\right]\right\}_{i=0}^4$ to the particular integrals $\{v_{4,i}\left(u\right):u\in\left[0,\beta\right]\}_{i=0}^4$ of the form (\ref{eq:particular_integrals}) which fulfill the initial conditions (\ref{eq:initial_conditions});
		\item
		next, one has to obtain the Doolittle $LU$-decomposition of the Wronskian matrix
		\begin{footnotesize}
			\begin{equation*}
			W_{\left[  v_{4,4},v_{4,3},v_{4,2},v_{4,1}, v_{4,0}\right]}\left(
			\beta\right)
			= 
			L \cdot U
			=
			\left[
			\begin{array}{rrrrr}
			1.2166 & -0.0000 & -0.0000 &       0 & -0.0000 \\
			1.3923 & -0.9304 & -0.0000 &       0 & -0.0000 \\
			0.6629 & -1.6472 &  1.0000 & -0.0000 &       0 \\
			-0.8886 & -0.5383 &  1.7705 & -1.0748 &  0.0000 \\
			-1.5551 &  2.1130 & -0.4963 & -1.2300 &  0.8219
			\end{array}
			\right],
			\end{equation*}
		\end{footnotesize}of the reversed ordered system $\left\{v_{4,4-i}\left(u\right):u\in\left[0,\beta\right]\right\}_{i=0}^4$ at $u = \beta$, i.e.,
		\begin{footnotesize}
			\begin{equation*}
			L
			=
			\left[
			\begin{array}{rrrrr}
			1.0000 &        0 &        0 &       0 &       0 \\
			1.1444 &   1.0000 &        0 &       0 &       0 \\
			0.5449 &   1.7705 &   1.0000 &       0 &       0 \\
			-0.7303 &   0.5786 &   1.7705 &  1.0000 &       0 \\
			-1.2782 &  -2.2711 &  -0.4963 &  1.1444 &  1.0000
			\end{array}
			\right],\,
			U
			=
			\left[
			\begin{array}{rrrrr}
			1.2166 & -0.0000 & -0.0000 &        0 & -0.0000 \\
			0      & -0.9304 &  0.0000 &        0 & -0.0000 \\
			0      &       0 &  1.0000 &  -0.0000 &  0.0000 \\
			0      &       0 &       0 &  -1.0748 & -0.0000 \\
			0      &       0 &       0 &        0 &  0.8219
			\end{array}
			\right];
			\end{equation*}
		\end{footnotesize}
		\item
		then, by using the essential parts of the inverse matrices
		\begin{footnotesize}
			\begin{equation*}
			L^{-1}
			=
			\left[
			\begin{array}{rrrrr}
			1.0000 = \lambda_{0,0} &     0 &      0 &      0 &      0 \\
			-1.1444 = \lambda_{1,0} & \star &      0 &      0 &      0 \\
			1.4812 = \lambda_{2,0} & \star &  \star &      0 &      0 \\
			\star & \star &  \star &  \star &      0 \\
			\star & \star &  \star &  \star &  \star
			\end{array}
			\right],\,
			U^{-1}
			=
			\left[
			\begin{array}{rrrrr}
			0.8219 = \mu_{0,0} & -0.0000 = \mu_{0,1} & 0.0000 = \mu_{0,2}  &  \star &  \star \\
			0                  & -1.0748 = \mu_{1,1} &  0.0000 = \mu_{1,2} &  \star &  \star \\
			0                  &       0             &  1.0000 = \mu_{2,2} &  \star &  \star \\
			0                  &       0             &       0             &  \star &  \star \\
			0                  &       0             &       0             &    0  &  \star
			\end{array}
			\right],
			\end{equation*}
		\end{footnotesize}	
		and the reflection invariant property of the vector space, one has that
		\begin{align}
		b_{4,4}\left(u\right) &= \lambda_{0, 0} \mu_{0,0} v_{4,4}\left(u\right),& b_{4,0}\left(u\right) &= b_{4,4}\left(\beta - u\right),\nonumber\\
		b_{4,3}\left(u\right) &= \lambda_{1, 0} \left(\mu_{0,1} v_{4,4}\left(u\right) + \mu_{1,1} v_{4,3}\left(u\right)\right),& b_{4,1}\left(u\right) &= b_{4,3}\left(\beta - u\right),\label{eq:5th_order_exp_trig_normalized_B_basis}\\
		b_{4,2}\left(u\right) &= \lambda_{2, 0} \left(\mu_{0,2} v_{4,4}\left(u\right) + \mu_{1,2} v_{4,3}\left(u\right) + \mu_{2,2}v_{4,2}\left(u\right)\right),& u&\in\left[0,\beta\right].\nonumber
		\end{align}
	\end{itemize}
	Fig.\ \ref{fig:exponential_trigonometric_space}(\textit{a}) shows the image of these normalized B-basis functions.
	
	\begin{figure}[!h]
		\centering
		\includegraphics[scale=1]{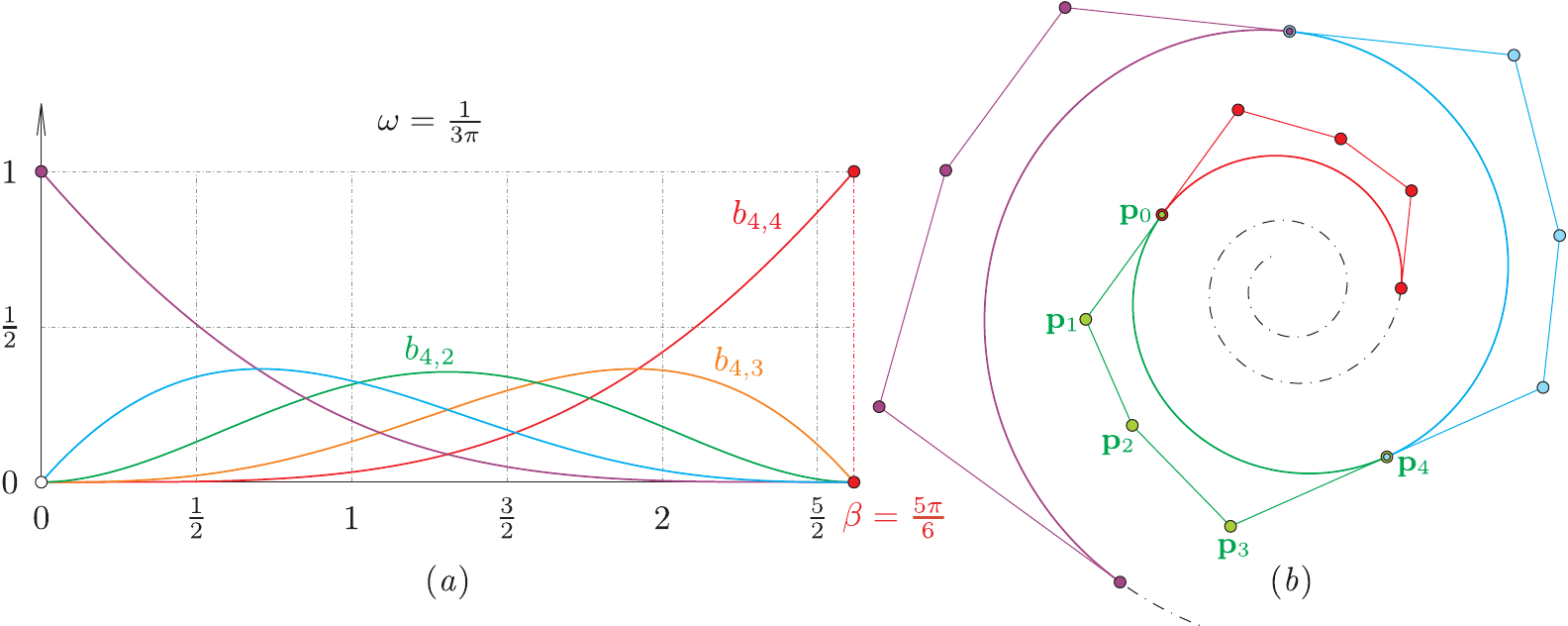}
		\caption{(\textit{a}) Exponential trigonometric normalized B-basis functions of order $4$ which correspond to the shape parameter $\beta = \frac{5\pi}{6}$ and growth rate $\omega = \frac{1}{3\pi}$. (\textit{b}) Control point based exact description of different arcs of the logarithmic spiral (\ref{eq:logarithmic_spiral}).}
		\label{fig:exponential_trigonometric_space}
	\end{figure}
	
	Using the higher order derivatives of the ordinary basis functions (\ref{eq:5th_order_exp_trig_ordinary_basis}) at $u=0$ and $u = \beta$, one can also easily evaluate the higher order derivatives of the obtained normalized B-basis functions by means of formulas (\ref{eq:mixed_b_derivatives})--(\ref{eq:mixed_b_symmetric_derivatives}). Substituting these derivatives into (\ref{eq:first_half})--(\ref{eq:last_half}), one also obtains the transformation matrix
	\begin{footnotesize}
		\begin{equation*}
		\left[t_{i,j}\right]_{i=0,\,j=0}^{4,\,4}
		=
		\left[
		\begin{array}{rrrrr}
		1.0000 & 1.0000 & 1.0000 &  1.0000 &  1.0000 \\
		1.0000 & 0.9073 & 0.2057 & -0.3859 & -0.6560 \\
		0      & 0.8738 & 1.0520 &  0.9871 &  0.3787 \\
		1.0000 & 1.0927 & 0.4593 & -0.4605 & -1.1433 \\
		0      & 0.8738 & 1.3386 &  1.5980 &  0.6601
		
		\end{array}
		\right]
		\end{equation*}
	\end{footnotesize}that is required for the control point based exact description (\ref{eq:convex_combination}) of any arc of the logarithmic spiral (\ref{eq:logarithmic_spiral}) that is defined over an interval of length $\beta$ (see Fig.\ \ref{fig:exponential_trigonometric_space}(\textit{b})). Observe that from algorithmic and implementation viewpoints, the steps above are much easier and more efficient to perform than the evaluation of other possible integral or determinant based representations. As long as parameters $\omega$ or $\beta$ are not modified, the calculations above do not have to be reevaluated.
\end{example}

\begin{example}
	[Quadratic algebraic trigonometric functions]%
	\label{exmp:algebraic_trigonometric}The normalized B-basis%
	\begin{align*}
		\mathcal{B}_{4}^{0,\beta}=  &  ~\left\{  b_{4,0}\left(  u\right)
		=b_{4,4}\left(  \beta-u\right)  ,~b_{4,1}\left(  u\right)  =b_{4,3}\left(
		\beta-u\right)  ,\right. \\
		&  ~~\left.  b_{4,2}\left(  u\right)  =c_{4,2}^{\beta}\left(  2\beta\left(
		\sin\left(  u\right)  -\sin\left(  \beta\right)  \right)  -2\beta\left(
		1-\cos\left(  \beta\right)  \right)  u+\beta^{2}+2\beta\sin\left(
		\beta-u\right)  -\beta^{2}\cos\left(  \beta-u\right)  +\right.  \right. \\
		&  ~~~~~~~~~~~~~~\left.  +\beta^{2}\left(  \cos\left(  \beta\right)
		-\cos\left(  u\right)  \right)  +2\left(  1-\cos\left(  \beta\right)  \right)
		u^{2}+\beta\left(  \beta-u\right)  u\sin\left(  \beta\right)  \right)  ,\\
		&  ~~\left.  b_{4,3}\left(  u\right)  =c_{4,3}^{\beta}\left(  2\left(
		\beta-u\right)  +2\left(  \sin\left(  u\right)  -\sin\left(  \beta\right)
		\right)  +2\left(  u\cos\left(  \beta\right)  -\beta\cos\left(  u\right)
		\right)  +2\sin\left(  \beta-u\right)  +\right.  \right. \\
		&  ~~~~~~~~~~~~~~\left.  +\beta^{2}\left(  u-\sin\left(  u\right)  \right)
		-\left(  \beta-\sin\left(  \beta\right)  \right)  u^{2}\right)  ,\\
		&  ~~\left.  b_{4,4}\left(  u\right)  =c_{4,4}^{\beta}\left(  2\cos\left(
		u\right)  +u^{2}-2\right)  :u\in\left[  0,\beta\right]  \right\}  ,~\beta
		\in\left(  0,\beta^{\star}_4\right), \,\beta_4^{\star} = 2\pi
	\end{align*}
	of the EC space %
	$
	\mathbb{S}_{4}^{0,\beta}    =\langle \mathcal{F}_{4}^{0,\beta
	}\rangle 
	=\langle \{  \varphi_{4,0}\left(  u\right)  =1,~\varphi
	_{4,1}\left(  u\right)  =u,~\varphi_{4,2}\left(  u\right)  =u^{2}%
	,\varphi_{4,3}\left(  u\right)  =\sin\left(  u\right)  ,~\varphi_{4,4}\left(
	u\right)  =\cos\left(  u\right)  :u\in\left[  0,\beta\right] \}
	\rangle
	$
	of algebraic trigonometric functions can also be constructed, e.g.\ by using either
	the differential equation based iterative integral representation published in
	\cite{MainarPena2010} and references therein or the determinant based formulas
	of \cite[Theorem 3.4]{Mazure1999}. The critical length %
	$\beta^{\star}_4 = 2\pi$ was determined in \cite[Section 5]{CarnicerMainarPena2004} or \cite[Proposition 3]{CarnicerMainarPena2007}, while positive scalars
	\begin{footnotesize}
		\begin{align*}
			c_{4,2}^{\beta}  &  =\frac{4-4\cos\left(  \beta\right)  -2\beta\sin\left(
				\beta\right)  }{\left(  \beta^{2}-4\cos\left(  \beta\right)  -4\beta
				\sin\left(  \beta\right)  +\beta^{2}\cos\left(  \beta\right)  +4\right)  ^{2}%
			},\\
			c_{4,3}^{\beta}  &  =\frac{2\left(  \beta-\sin\left(  \beta\right)  \right)
			}{\left(  2\cos\left(  \beta\right)  +\beta^{2}-2\right)  \left(  \beta
			^{2}-4\cos\left(  \beta\right)  -4\beta\sin\left(  \beta\right)  +\beta
			^{2}\cos\left(  \beta\right)  +4\right)  },\\
		c_{4,4}^{\beta}  &  =\frac{1}{2\cos\left(  \beta\right)  +\beta^{2}-2}
	\end{align*}
\end{footnotesize}are normalizing coefficients.
Applying Theorem \ref{thm:basis_transformation} with the settings above, one
obtains the transformation matrix%
\begin{equation}
	\left[  t_{i,j}^{4}\right]  _{i=0,~j=0}^{4,~4}=%
	\begin{footnotesize}%
		\def\arraystretch{1.75}%
		\left[
		\begin{array}
			[c]{ccccc}%
			1 & 1 & 1 & 1 & 1\\
			0 & \frac{2\cos\beta+\beta^{2}-2}{2\left(  \beta-\sin\beta\right)  } &
			\frac{\left(  2-2\cos\beta-\beta\sin\beta\right)  \beta}{4-4\cos\beta
				-2\beta\sin\beta} & \beta-\frac{2\cos\beta+\beta^{2}-2}{2\left(  \beta
				-\sin\beta\right)  } & \beta\\
			0 & 0 & \frac{\beta^{2}-4\cos\beta-4\beta\sin\beta+\beta^{2}\cos\beta
				+4}{2-2\cos\beta-\beta\sin\beta} & \beta^{2}-\frac{\left(  2\cos\beta
				+\beta^{2}-2\right)  \beta}{\left(  \beta-\sin\beta\right)  } & \beta^{2}\\
			0 & \frac{2\cos\beta+\beta^{2}-2}{2\left(  \beta-\sin\beta\right)  } &
			\frac{\left(  2-2\cos\beta-\beta\sin\beta\right)  \beta}{4-4\cos\beta
				-2\beta\sin\beta} & \sin\left(  \beta\right)  -\frac{\left(  2\cos\beta
				+\beta^{2}-2\right)  \cos\left(  \beta\right)  }{2\left(  \beta-\sin
				\beta\right)  } & \sin\left(  \beta\right) \\
			1 & 1 & \frac{\left(  2\sin\beta-\beta-\beta\cos\beta\right)  \beta}%
			{4-4\cos\beta-2\beta\sin\beta} & \cos\left(  \beta\right)  +\frac{\left(
				2\cos\beta+\beta^{2}-2\right)  \sin\left(  \beta\right)  }{2\left(  \beta
				-\sin\beta\right)  } & \cos\left(  \beta\right)
		\end{array}
		\right]
		\def\arraystretch{1.0}%
	\end{footnotesize}
	\label{eq:algebraic_trigonometric_basis_transformation}%
\end{equation}
that maps $\mathcal{B}_{4}^{0,\beta}$ to $\mathcal{F}_{4}^{0,\beta}$, based on which Fig.\ \ref{fig:cycloids_and_helices} illustrates the
control point based exact description of cycloids and helices of different
shape parameters, while Fig.\ \ref{fig:cylindrical_helicoid_and_hyperboloid}(a)
shows the control net of a cylindrical helicoid.
\end{example}

\begin{figure}
	[!h]
	\begin{center}
		\includegraphics[scale = 0.925]{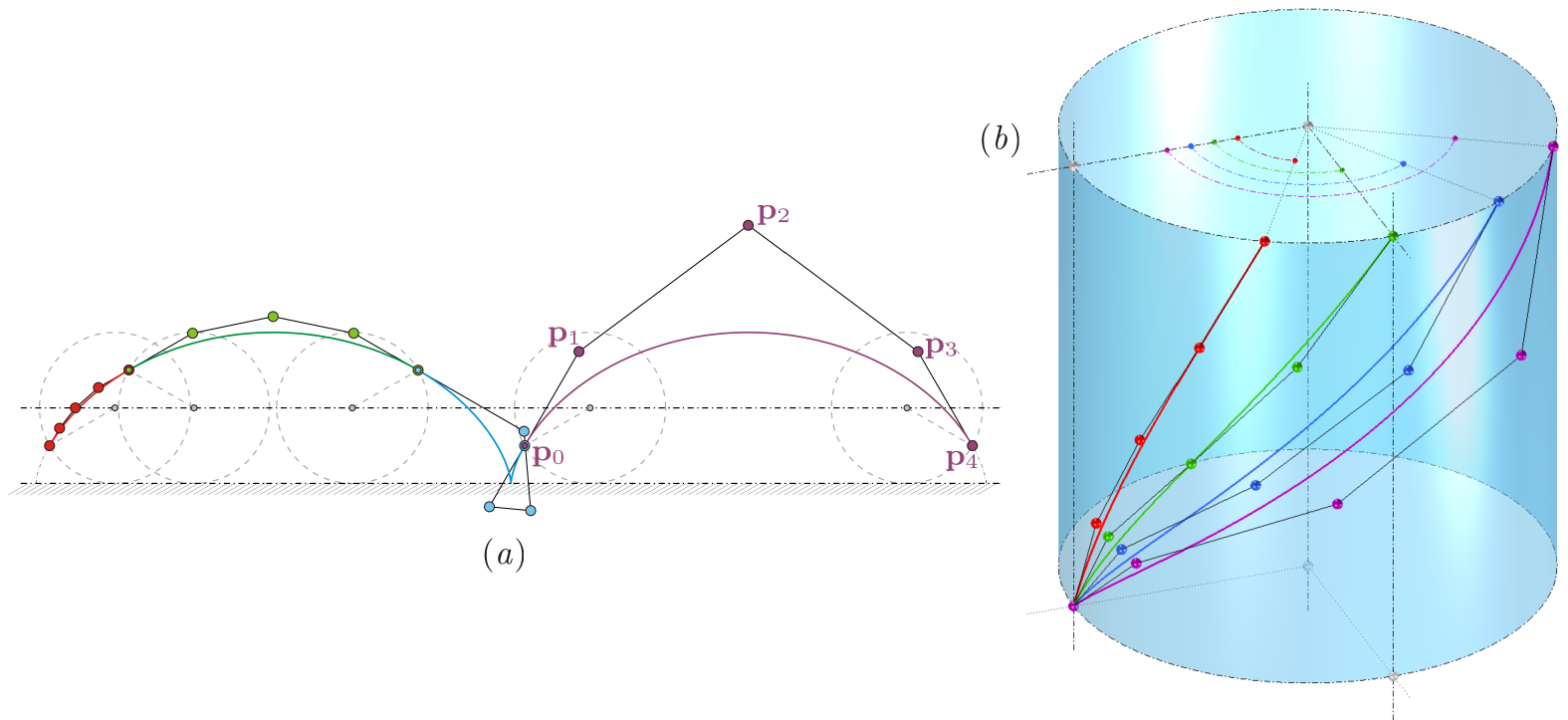}
		
		\caption{(\textit{a}) Cycloids and (\textit{b}) helices of different shape
			parameters described by means of Theorem \ref{thm:integral_curves} and of the
			basis transformation detailed in Example \ref{exmp:algebraic_trigonometric}.}%
		
		\label{fig:cycloids_and_helices}%
	\end{center}
\end{figure}

\begin{remark}
	[Hybrid EC B-surfaces]Naturally, one can also combine different
	types of normalized B-basis functions in order to describe hybrid surfaces as
	it is shown in Fig.\ \ref{fig:cylindrical_helicoid_and_hyperboloid}(b) that
	illustrates the control point based exact description of a hyperboloidal patch.
\end{remark}

\begin{figure}
	[!h]
	\begin{center}
		\includegraphics[scale = 0.925]%
		{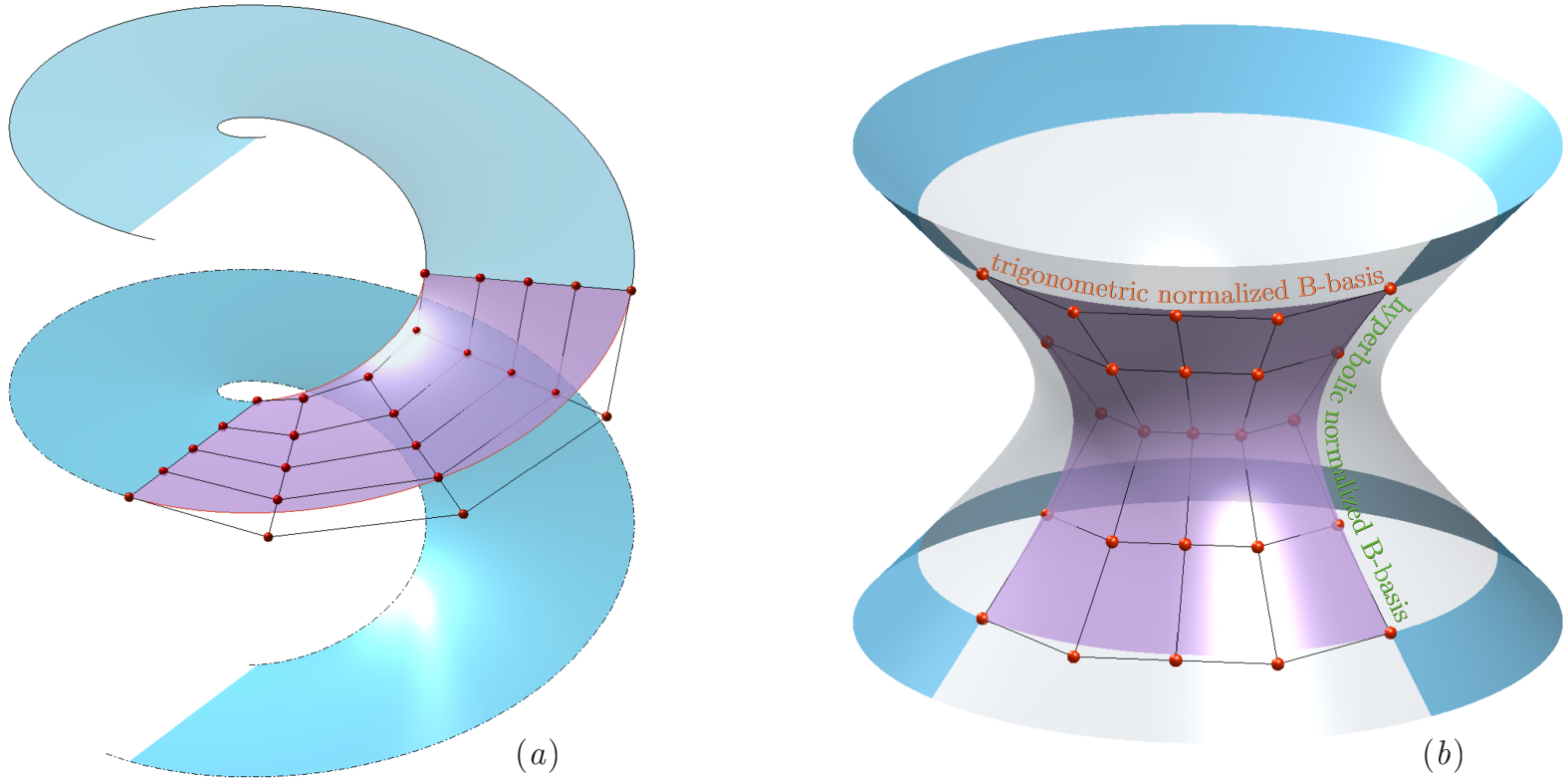}%
		
		\caption{Control point based exact description of (\textit{a}) helicoidal and
			of (\textit{b}) hyperboloidal patches, respectively. The control net of the
			patch (\textit{a}) was constructed in both direction by means of basis
			transformations of the type
			(\ref{eq:algebraic_trigonometric_basis_transformation}) with different shape
			parameters. In case of patch (\textit{b}) the control point configuration was
			obtained by using both the trigonometric and hyberbolic basis transformations
			(\ref{eq:trigonometric_basis_transformation}) and
			(\ref{eq:hyperbolic_basis_transformation}), respectively.}%
		
		\label{fig:cylindrical_helicoid_and_hyperboloid}%
	\end{center}
\end{figure}

\section{Proof of main results\label{sec:proofs}}

\begin{proof}
	[Proof of Theorem \ref{thm:basis_transformation}]The linear transformation
	$[  t_{i,j}^{n}]  _{i=0,j=0}^{n,n}$ that maps the normalized
	B-basis $\mathcal{B}_{n}^{\alpha,\beta}$ of the vector space $\mathbb{S}%
	_{n}^{\alpha,\beta}$ to its ordinary basis $\mathcal{F}_{n}^{\alpha,\beta}$
	will be constructed by mathematical induction on the column index $j$ or
	$n-j$, where $j=0,1,\ldots,\lfloor \frac{n}{2}\rfloor $. Using one
	of the properties (\ref{eq:partition_of_unity}%
	)--(\ref{eq:Hermite_conditions_alpha}), at each step $j$ we will compare the
	left and right side of the $j$th order derivative of the matrix equality
	(\ref{eq:basis_transformation}), thus obtaining an iterative process that is
	outlined in Fig.\ \ref{fig:basis_transformation}.%
	
	\begin{figure}
		[!h]
		\begin{center}
			\includegraphics[scale = 0.95]{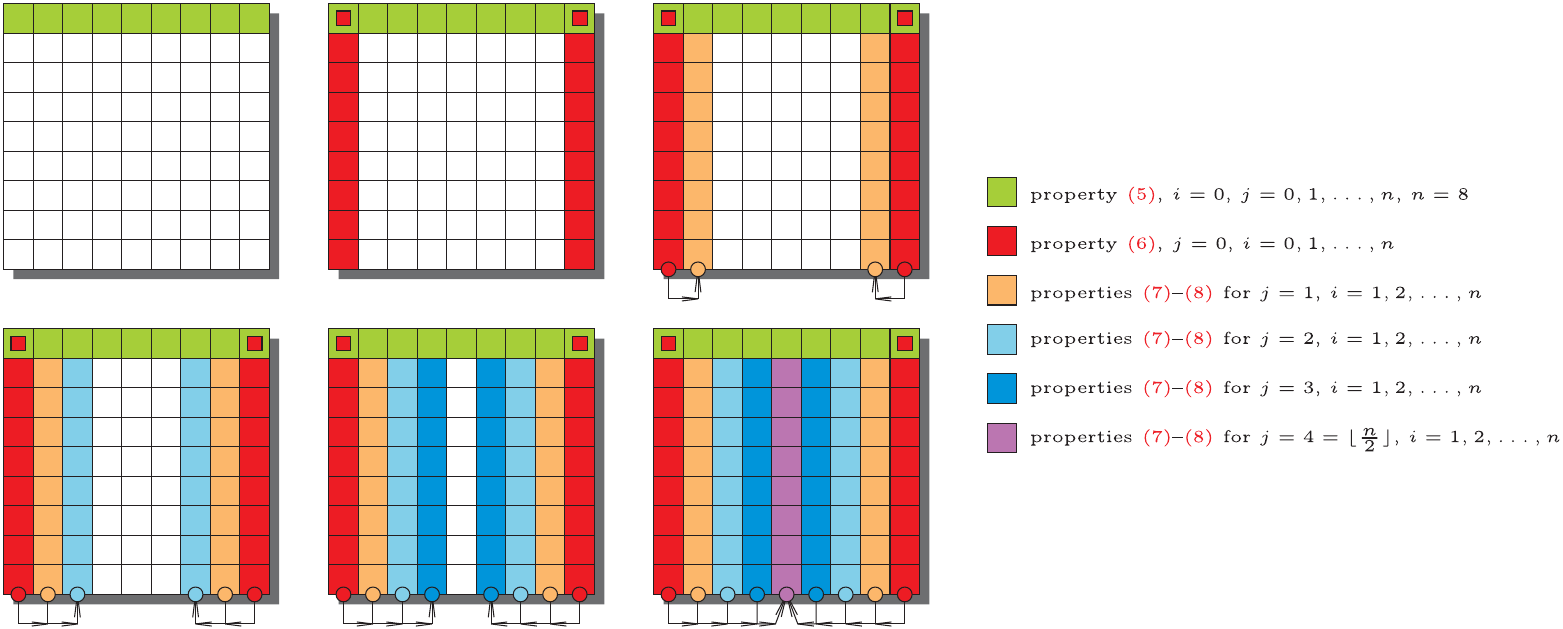}%
			\caption{Outline of the proof.}%
			\label{fig:basis_transformation}%
		\end{center}
	\end{figure}
	
	First of all, observe that
	\[
	t_{0,j}=1,~\forall j=0,1,\ldots,n
	\]
	and%
	\[
	t_{i,0}^{n}b_{n,i}\left(  \alpha\right)  =t_{i,0}^{n}=\varphi_{n,i}\left(
	\alpha\right)  ,~~~t_{i,n}^{n}b_{n,n}\left(  \beta\right)  =t_{i,n}%
	^{n}=\varphi_{n,i}\left(  \beta\right)  ,~i=0,1,\ldots,n,
	\]
	due to the partition of unity property (\ref{eq:partition_of_unity}) and to
	the endpoint interpolation property (\ref{eq:endpoint_interpolation}),
	respectively. Using forward substitutions, the elements of the columns %
	$
	[  t_{i,j}^{n}]  _{i=1}^{n},~j=1,2,\ldots,\lfloor \frac{n}%
	{2}\rfloor
	$
	are iteratively determined by differentiating the matrix equality
	(\ref{eq:basis_transformation}) with gradually increasing order and applying
	the Hermite conditions (\ref{eq:Hermite_conditions_0}) at $u=\alpha$. In order to
	formulate a mathematical induction hypothesis, let us consider some special
	cases. When $j=1$ one obtains that%
	\[
	\varphi_{n,i}^{\left(  1\right)  }\left(  \alpha\right)  =t_{i,0}^{n}%
	b_{n,0}^{\left(  1\right)  }\left(  \alpha\right)  +t_{i,1}^{n}b_{n,1}%
	^{\left(  1\right)  }\left(  \alpha\right)  ,~i=0,1,\ldots,n,
	\]
	where $b_{n,1}^{\left(  1\right)  }\left(  \alpha\right)  \neq0$ and for the
	special subcase $i=0$ one has that%
	\[
	b_{n,0}^{\left(  1\right)  }\left(  \alpha\right)  +b_{n,1}^{\left(  1\right)
	}\left(  \alpha\right)  =\varphi_{n,0}^{\left(  1\right)  }\left(
	\alpha\right)  =0,
	\]
	i.e.,%
	\begin{align*}
	b_{n,0}^{\left(  1\right)  }\left(  \alpha\right)   &  =-b_{n,1}^{\left(
		1\right)  }\left(  \alpha\right)  ,\\
	& \\
	t_{i,1}^{n}  &  =\frac{1}{b_{n,1}^{\left(  1\right)  }\left(  \alpha\right)
	}\left(  \varphi_{n,i}^{\left(  1\right)  }\left(  \alpha\right)  -t_{i,0}%
	^{n}b_{n,0}^{\left(  1\right)  }\left(  \alpha\right)  \right)  =\frac
	{1}{b_{n,1}^{\left(  1\right)  }\left(  \alpha\right)  }\left(  \varphi
	_{n,i}^{\left(  1\right)  }\left(  \alpha\right)  +\varphi_{n,i}\left(
	\alpha\right)  b_{n,1}^{\left(  1\right)  }\left(  \alpha\right)  \right)
	=\varphi_{n,i}\left(  \alpha\right)  +\frac{\varphi_{n,i}^{\left(  1\right)
		}\left(  \alpha\right)  }{b_{n,1}^{\left(  1\right)  }\left(  \alpha\right)
	},\\
	i  &  =1,2,\ldots,n.
	\end{align*}
	For $j=2$, we have that%
	\[
	\varphi_{n,i}^{\left(  2\right)  }\left(  \alpha\right)  =t_{i,0}^{n}%
	b_{n,0}^{\left(  2\right)  }\left(  \alpha\right)  +t_{i,1}^{n}b_{n,1}%
	^{\left(  2\right)  }\left(  \alpha\right)  +t_{i,2}^{n}b_{n,2}^{\left(
		2\right)  }\left(  \alpha\right)  ,
	\]
	where $b_{n,2}^{\left(  2\right)  }\left(  \alpha\right)  \neq0$ and for the
	special subcase $i=0$ we obtain that%
	\[
	b_{n,0}^{\left(  2\right)  }\left(  \alpha\right)  +b_{n,1}^{\left(  2\right)
	}\left(  \alpha\right)  +b_{n,2}^{\left(  2\right)  }\left(  \alpha\right)
	=\varphi_{n,0}^{\left(  2\right)  }\left(  \alpha\right)  =0,
	\]
	i.e.,%
	\begin{align*}
	b_{n,0}^{\left(  2\right)  }\left(  \alpha\right)  +b_{n,1}^{\left(  2\right)
	}\left(  \alpha\right)   &  =-b_{n,2}^{\left(  2\right)  }\left(
	\alpha\right)  ,\\
	& \\
	t_{i,2}^{n}  &  =\frac{1}{b_{n,2}^{\left(  2\right)  }\left(  \alpha\right)
	}\left(  \varphi_{n,i}^{\left(  2\right)  }\left(  \alpha\right)  -t_{i,0}%
	^{n}b_{n,0}^{\left(  2\right)  }\left(  \alpha\right)  -t_{i,1}^{n}%
	b_{n,1}^{\left(  2\right)  }\left(  \alpha\right)  \right) \\
	&  =\frac{1}{b_{n,2}^{\left(  2\right)  }\left(  \alpha\right)  }\left(
	\varphi_{n,i}^{\left(  2\right)  }\left(  \alpha\right)  -\varphi_{n,i}\left(
	\alpha\right)  b_{n,0}^{\left(  2\right)  }\left(  \alpha\right)  -\left(
	\varphi_{n,i}\left(  \alpha\right)  +\frac{\varphi_{n,i}^{\left(  1\right)
		}\left(  \alpha\right)  }{b_{n,1}^{\left(  1\right)  }\left(  \alpha\right)
	}\right)  b_{n,1}^{\left(  2\right)  }\left(  \alpha\right)  \right) \\
	&  =\varphi_{n,i}\left(  \alpha\right)  -\frac{1}{b_{n,2}^{\left(  2\right)
		}\left(  \alpha\right)  }\cdot\frac{\varphi_{n,i}^{\left(  1\right)  }\left(
		\alpha\right)  }{b_{n,1}^{\left(  1\right)  }\left(  \alpha\right)  }%
	b_{n,1}^{\left(  2\right)  }\left(  \alpha\right)  +\frac{\varphi
		_{n,i}^{\left(  2\right)  }\left(  \alpha\right)  }{b_{n,2}^{\left(  2\right)
		}\left(  \alpha\right)  },~i=1,2,\ldots,n.
	\end{align*}
	In case of $j=3$ one obtains that%
	\[
	\varphi_{n,i}^{\left(  3\right)  }\left(  \alpha\right)  =t_{i,0}^{n}%
	b_{n,0}^{\left(  3\right)  }\left(  \alpha\right)  +t_{i,1}^{n}b_{n,1}%
	^{\left(  3\right)  }\left(  \alpha\right)  +t_{i,2}^{n}b_{n,2}^{\left(
		3\right)  }\left(  \alpha\right)  +t_{i,3}^{n}b_{n,3}^{\left(  3\right)
	}\left(  \alpha\right)  ,~i=0,1,\ldots,n,
	\]
	where $b_{n,3}^{\left(  3\right)  }\left(  \alpha\right)  \neq0$ and for the
	special subcase $i=0$, one has that%
	\[
	b_{n,0}^{\left(  3\right)  }\left(  \alpha\right)  +b_{n,1}^{\left(  3\right)
	}\left(  \alpha\right)  +b_{n,2}^{\left(  3\right)  }\left(  \alpha\right)
	+b_{n,3}^{\left(  3\right)  }\left(  \alpha\right)  =\varphi_{n,0}^{\left(
		3\right)  }\left(  \alpha\right)  =0,
	\]
	i.e.,%
	\begin{align*}
	-b_{n,3}^{\left(  3\right)  }\left(  \alpha\right)  =  &  ~b_{n,0}^{\left(
		3\right)  }\left(  \alpha\right)  +b_{n,1}^{\left(  3\right)  }\left(
	\alpha\right)  +b_{n,2}^{\left(  3\right)  }\left(  \alpha\right)  ,\\
	& \\
	t_{i,3}^{n}=  &  ~\frac{1}{b_{n,3}^{\left(  3\right)  }\left(  \alpha\right)
	}\left(  \varphi_{n,i}^{\left(  3\right)  }\left(  \alpha\right)  -t_{i,0}%
	^{n}b_{n,0}^{\left(  3\right)  }\left(  \alpha\right)  -t_{i,1}^{n}%
	b_{n,1}^{\left(  3\right)  }\left(  \alpha\right)  -t_{i,2}^{n}b_{n,2}%
	^{\left(  3\right)  }\left(  \alpha\right)  \right) \\
	=  &  ~\frac{1}{b_{n,3}^{\left(  3\right)  }\left(  \alpha\right)  }\left(
	\varphi_{n,i}^{\left(  3\right)  }\left(  \alpha\right)  -\varphi_{n,i}\left(
	\alpha\right)  b_{n,0}^{\left(  3\right)  }\left(  \alpha\right)  -\left(
	\varphi_{n,i}\left(  \alpha\right)  +\frac{\varphi_{n,i}^{\left(  1\right)
		}\left(  \alpha\right)  }{b_{n,1}^{\left(  1\right)  }\left(  \alpha\right)
	}\right)  b_{n,1}^{\left(  3\right)  }\left(  \alpha\right)  \right. \\
	&  ~\left.  -\left(  \varphi_{n,i}\left(  \alpha\right)  -\frac{1}%
	{b_{n,2}^{\left(  2\right)  }\left(  \alpha\right)  }\cdot\frac{\varphi
		_{n,i}^{\left(  1\right)  }\left(  \alpha\right)  }{b_{n,1}^{\left(  1\right)
		}\left(  \alpha\right)  }b_{n,1}^{\left(  2\right)  }\left(  \alpha\right)
	+\frac{\varphi_{n,i}^{\left(  2\right)  }\left(  \alpha\right)  }%
	{b_{n,2}^{\left(  2\right)  }\left(  \alpha\right)  }\right)  b_{n,2}^{\left(
		3\right)  }\left(  \alpha\right)  \right) \\
	=  &  ~\varphi_{n,i}\left(  \alpha\right)  -\frac{1}{b_{n,3}^{\left(
			3\right)  }\left(  \alpha\right)  }\left(  \frac{\varphi_{n,i}^{\left(
			1\right)  }\left(  \alpha\right)  }{b_{n,1}^{\left(  1\right)  }\left(
		\alpha\right)  }\left(  b_{n,1}^{\left(  3\right)  }\left(  \alpha\right)
	-\frac{b_{n,1}^{\left(  2\right)  }\left(  \alpha\right)  b_{n,2}^{\left(
			3\right)  }\left(  \alpha\right)  }{b_{n,2}^{\left(  2\right)  }\left(
		\alpha\right)  }\right)  +\frac{\varphi_{n,i}^{\left(  2\right)  }\left(
		\alpha\right)  }{b_{n,2}^{\left(  2\right)  }\left(  \alpha\right)  }%
	b_{n,2}^{\left(  3\right)  }\left(  \alpha\right)  \right)  +\frac
	{\varphi_{n,i}^{\left(  3\right)  }\left(  \alpha\right)  }{b_{n,3}^{\left(
			3\right)  }\left(  \alpha\right)  },\\
	i=  &  ~1,2,\ldots,n.
	\end{align*}
	One can observe that expressions corresponding to these special cases are
	in accordance with formula (\ref{eq:first_half}). Now, fix the column index
	$j\in\left\{  1,2,\ldots,\lfloor \frac{n}{2}\rfloor -1\right\}  $
	and assume that formula (\ref{eq:first_half}) is valid up to the selected
	index $j$ and we will also prove it for $j+1$. We can proceed as follows:%
	\[
	\varphi_{n,i}^{\left(  j+1\right)  }\left(  \alpha\right)  =\sum_{\gamma
		=0}^{j+1}t_{i,\gamma}^{n}b_{n,\gamma}^{\left(  j+1\right)  }\left(
	\alpha\right)  ,~i=0,1,\ldots,n,
	\]
	where $b_{n,j+1}^{\left(  j+1\right)  }\left(  \alpha\right)  \neq0$ and for
	the special subcase $i=0$ we have that%
	\[
	\sum_{\gamma=0}^{j+1}b_{n,\gamma}^{\left(  j+1\right)  }\left(  \alpha\right)
	=\varphi_{n,0}^{\left(  j+1\right)  }\left(  \alpha\right)  =0,
	\]
	i.e.,%
	\begin{align*}
	-b_{n,j+1}^{\left(  j+1\right)  }\left(  \alpha\right)  = &  ~\sum_{\gamma
		=0}^{j}b_{n,\gamma}^{\left(  j+1\right)  }\left(  \alpha\right)  ,\\
	& \\
	t_{i,j+1}^{n}= &  ~\frac{1}{b_{n,j+1}^{\left(  j+1\right)  }\left(
		\alpha\right)  }\cdot\left(  \varphi_{n,i}^{\left(  j+1\right)  }\left(
	\alpha\right)  -\sum_{\gamma=0}^{j}t_{i,\gamma}^{n}b_{n,\gamma}^{\left(
		j+1\right)  }\left(  \alpha\right)  \right)  \\
	& \\
	= &  ~\frac{\varphi_{n,i}^{\left(  j+1\right)  }\left(  \alpha\right)
	}{b_{n,j+1}^{\left(  j+1\right)  }\left(  \alpha\right)  }-\frac{1}%
	{b_{n,j+1}^{\left(  j+1\right)  }\left(  \alpha\right)  }\sum_{\gamma=0}%
	^{j}b_{n,\gamma}^{\left(  j+1\right)  }\left(  \alpha\right)  t_{i,\gamma}%
	^{n}\\
	& \\
	= &  ~\frac{\varphi_{n,i}^{\left(  j+1\right)  }\left(  \alpha\right)
	}{b_{n,j+1}^{\left(  j+1\right)  }\left(  \alpha\right)  }-\frac{1}%
	{b_{n,j+1}^{\left(  j+1\right)  }\left(  \alpha\right)  }\cdot\sum_{\gamma
		=0}^{j}b_{n,\gamma}^{\left(  j+1\right)  }\left(  \alpha\right)  \left(
	\varphi_{n,i}\left(  \alpha\right)  +\frac{\varphi_{n,i}^{\left(
			\gamma\right)  }\left(  \alpha\right)  }{b_{n,\gamma}^{\left(  \gamma\right)
		}\left(  \alpha\right)  }-\frac{1}{b_{n,\gamma}^{\left(  \gamma\right)
	}\left(  \alpha\right)  }\cdot\left.  \sum_{r=1}^{\gamma-1}\frac{\varphi
	_{n,i}^{\left(  r\right)  }\left(  \alpha\right)  }{b_{n,r}^{\left(  r\right)
	}\left(  \alpha\right)  }\right(  b_{n,r}^{\left(  \gamma\right)  }\left(
\alpha\right)  +\right.  \\
&  ~\left.  \left.  +\sum_{\ell=1}^{\gamma-r-1}\left(  -1\right)  ^{\ell}%
\sum_{r<k_{\gamma,1}<k_{\gamma,2}<\ldots<k_{\gamma,\ell}<\gamma}\frac
{b_{n,r}^{\left(  k_{\gamma,1}\right)  }\left(  \alpha\right)  b_{n,k_{\gamma
			,1}}^{\left(  k_{\gamma,2}\right)  }\left(  \alpha\right)  b_{n,k_{\gamma,2}%
	}^{\left(  k_{\gamma,3}\right)  }\left(  \alpha\right)  \ldots b_{n,k_{\gamma
		,\ell-1}}^{\left(  k_{\gamma,\ell}\right)  }\left(  \alpha\right)
b_{n,k_{\gamma,\ell}}^{\left(  \gamma\right)  }\left(  \alpha\right)
}{b_{n,k_{\gamma,1}}^{\left(  k_{\gamma,1}\right)  }\left(  \alpha\right)
b_{n,k_{\gamma,2}}^{\left(  k_{\gamma,2}\right)  }\left(  \alpha\right)
\ldots b_{n,k_{\gamma,\ell}}^{\left(  k_{\gamma,\ell}\right)  }\left(
\alpha\right)  }\right)  \right)  \\
& \\
= &  ~\frac{\varphi_{n,i}^{\left(  j+1\right)  }\left(  \alpha\right)
}{b_{n,j+1}^{\left(  j+1\right)  }\left(  \alpha\right)  }+\varphi
_{n,i}\left(  \alpha\right)  -\frac{1}{b_{n,j+1}^{\left(  j+1\right)  }\left(
	\alpha\right)  }\cdot\sum_{\gamma=0}^{j}b_{n,\gamma}^{\left(  j+1\right)
}\left(  \alpha\right)  \left(  \frac{\varphi_{n,i}^{\left(  \gamma\right)
}\left(  \alpha\right)  }{b_{n,\gamma}^{\left(  \gamma\right)  }\left(
\alpha\right)  }-\frac{1}{b_{n,\gamma}^{\left(  \gamma\right)  }\left(
\alpha\right)  }\cdot\left.  \sum_{r=1}^{\gamma-1}\frac{\varphi_{n,i}^{\left(
	r\right)  }\left(  \alpha\right)  }{b_{n,r}^{\left(  r\right)  }\left(
\alpha\right)  }\right(  b_{n,r}^{\left(  \gamma\right)  }\left(
\alpha\right)  +\right.  \\
&  ~\left.  \left.  +\sum_{\ell=1}^{\gamma-r-1}\left(  -1\right)  ^{\ell}%
\sum_{r<k_{\gamma,1}<k_{\gamma,2}<\ldots<k_{\gamma,\ell}<\gamma}\frac
{b_{n,r}^{\left(  k_{\gamma,1}\right)  }\left(  \alpha\right)  b_{n,k_{\gamma
			,1}}^{\left(  k_{\gamma,2}\right)  }\left(  \alpha\right)  b_{n,k_{\gamma,2}%
	}^{\left(  k_{\gamma,3}\right)  }\left(  \alpha\right)  \ldots b_{n,k_{\gamma
		,\ell-1}}^{\left(  k_{\gamma,\ell}\right)  }\left(  \alpha\right)
b_{n,k_{\gamma,\ell}}^{\left(  \gamma\right)  }\left(  \alpha\right)
}{b_{n,k_{\gamma,1}}^{\left(  k_{\gamma,1}\right)  }\left(  \alpha\right)
b_{n,k_{\gamma,2}}^{\left(  k_{\gamma,2}\right)  }\left(  \alpha\right)
\ldots b_{n,k_{\gamma,\ell}}^{\left(  k_{\gamma,\ell}\right)  }\left(
\alpha\right)  }\right)  \right)  \\
& \\
= &  ~\varphi_{n,i}\left(  \alpha\right)  -\frac{1}{b_{n,j+1}^{\left(
		j+1\right)  }\left(  \alpha\right)  }\cdot\left.  \sum_{r=1}^{j}\frac
{\varphi_{n,i}^{\left(  r\right)  }\left(  \alpha\right)  }{b_{n,r}^{\left(
		r\right)  }\left(  \alpha\right)  }\right(  b_{n,r}^{\left(  j+1\right)
}\left(  \alpha\right)  +\\
&  ~\left.  +\sum_{\ell=1}^{j-r}\left(  -1\right)  ^{\ell}\sum_{r<k_{1}%
	<k_{2}<\ldots<k_{\ell}<j+1}\frac{b_{n,r}^{\left(  k_{1}\right)  }\left(
	\alpha\right)  b_{n,k_{1}}^{\left(  k_{2}\right)  }\left(  \alpha\right)
	b_{n,k_{2}}^{\left(  k_{3}\right)  }\left(  \alpha\right)  \ldots
	b_{n,k_{\ell-1}}^{\left(  k_{\ell}\right)  }\left(  \alpha\right)
	b_{n,k_{\ell}}^{\left(  j+1\right)  }\left(  \alpha\right)  }{b_{n,k_{1}%
	}^{\left(  k_{1}\right)  }\left(  \alpha\right)  b_{n,k_{2}}^{\left(
	k_{2}\right)  }\left(  \alpha\right)  \ldots b_{n,k_{\ell}}^{\left(  k_{\ell
	}\right)  }\left(  \alpha\right)  }\right)  +\frac{\varphi_{n,i}^{\left(
	j+1\right)  }\left(  \alpha\right)  }{b_{n,j+1}^{\left(  j+1\right)  }\left(
\alpha\right)  },
\end{align*}
which means that our induction hypothesis is correct for all column indices
$j=1,2,\ldots,\lfloor \frac{n}{2}\rfloor $.

Using a similar technique based on backward substitutions, the entries of the columns 
$
[  t_{i,n-j}^{n}]  _{i=1}^{n},~j=1,2,\ldots,\lfloor \frac
{n}{2}\rfloor
$
can also iteratively be determined by differentiating the matrix equality
(\ref{eq:basis_transformation}) with gradually increasing order and applying
the Hermite conditions (\ref{eq:Hermite_conditions_alpha}) at $u=\beta$. After
correct reformulations one obtains exactly the formula (\ref{eq:last_half}).
\end{proof}

\bigskip

\begin{proof}
	[Proof of Theorem \ref{thm:integral_curves}]Using Theorem
	\ref{thm:basis_transformation} and Corollary \ref{cor:ordinary_functions}, the
	$\ell$th coordinate function ($\ell=1,2,\ldots,\delta$) of the ordinary
	integral curve (\ref{eq:ordinary_integral_curve}) can be rewritten into%
	\[
	c^{\ell}\left(  u\right)  =\sum_{i=0}^{n}\lambda_{i}^{\ell}\varphi
	_{n,i}\left(  u\right)  =\sum_{j=0}^{n}p_{j}^{\ell}b_{n,j}\left(  u\right)
	,~\forall u\in\left[  \alpha,\beta\right]  ,
	\]
	where%
	\[
	p_{j}^{\ell}=\sum_{i=0}^{n}\lambda_{i}^{\ell}t_{i,j}^{n},~j=0,1,\ldots,n.
	\]
	Repeating this transformation along all coordinate functions and collecting
	the coefficients of the normalized B-basis functions, one obtains the vertices
	$\mathbf{p}_{j}=\left[  p_{j}^{\ell}\right]  _{\ell=1}^{\delta}$ of the
	required control polygon.
\end{proof}

\bigskip

\begin{proof}
	[Proof of Theorem \ref{thm:integral_surfaces}]By means of Theorem
	\ref{thm:basis_transformation}, one can construct for all $r=1,2$ the regular
	transformation matrix $\left[  t_{i_{r},j_{r}}^{n_{r}}\right]  _{i_{r}%
		=0,~j_{r}=0}^{n_{r},~n_{r}}$ that maps the normalized B-basis $\mathcal{B}%
	_{n_{r}}^{\alpha_{r},\beta_{r}}$ of the vector space $\mathbb{S}_{n_{r}%
	}^{\alpha_{r},\beta_{r}}$ to its ordinary basis $\mathcal{F}_{n_{r}}%
	^{\alpha_{r},\beta_{r}}$. Observe that the $\ell$th coordinate function
	($\ell=1,2,3$) of the ordinary integral surface
	(\ref{eq:ordinary_integral_surface}) can be written in the form%
	\begin{align*}
	s^{\ell}\left(  \mathbf{u}\right)   &  ~=\sum_{\zeta=1}^{\sigma_{\ell}}\prod
	_{r=1}^{2}\left(  \sum_{i_{r}=0}^{n_{r}}\lambda_{i_{r}}^{\ell,\zeta}%
	\varphi_{n_{r},i_{r}}\left(  u_{r}\right)  \right)  
	=\sum_{\zeta=1}^{\sigma_{\ell}}\prod_{r=1}^{2}\left(  \sum_{j_{r}=0}^{n_{r}%
	}p_{j_{r}}^{\ell,\zeta}b_{n_{r},j_{r}}\left(  u_{r}\right)  \right)  \\
	&  ~=\sum_{j_{1}=0}^{n_{1}}\sum_{j_{2}=0}^{n_{2}}\left(  \sum_{\zeta
		=1}^{\sigma_{\ell}}\prod_{r=1}^{2}p_{j_{r}}^{\ell,\zeta}\right)  b_{n_{1},j_{1}%
	}\left(  u_{1}\right)  b_{n_{2},j_{2}}\left(  u_{2}\right)  
	=\sum_{j_{1}=0}^{n_{1}}\sum_{j_{2}=0}^{n_{2}}p_{j_{1},j_{2}}^{\ell}%
	b_{n_{1},j_{1}}\left(  u_{1}\right)  b_{n_{2},j_{2}}\left(  u_{2}\right)
	\end{align*}
	for all $\mathbf{u=}\left[  u_{r}\right]  _{r=1}^{2}\in\left[  \alpha
	_{1},\beta_{1}\right]  \times\left[  \alpha_{2},\beta_{2}\right]  $, where
	\[
	p_{j_{1},j_{2}}^{\ell}:=\sum_{\zeta=1}^{\sigma_{\ell}}\prod_{r=1}^{2}p_{j_{r}}%
	^{\ell,\zeta}%
	\]
	and the values%
	\[
	p_{j_{r}}^{\ell,\zeta}=\sum_{i_{r}=0}^{n_{r}}\lambda_{i_{r}}^{\ell,\zeta
	}t_{i_{r},j_{r}}^{n_{r}},~\zeta=1,2,\ldots,\sigma_{\ell},~j_{r}=0,1,\ldots
	,n_{r},~r=1,2
	\]
	can be obtained by means of Corollary \ref{cor:ordinary_functions}. Repeating
	this reformulation for all coordinate functions and collecting the
	coefficients of the product of normalized B-basis functions, one obtains all
	coordinates of all control net points $\mathbf{p}_{j_{1},j_{2}}=\left[
	p_{j_{1},j_{2}}^{\ell}\right]  _{\ell=1}^{3}$.
\end{proof}

\bigskip

\begin{proof}[Proof of Theorem \ref{thm:computational_complexity}]Let $i\in\{  1,2,\ldots,n\}$, $j\in\{  1,2,\ldots
	,\left\lfloor \frac{n}{2}\right\rfloor\}$, $r\in\{
	1,2,\ldots,j-1\}$ and $\ell\in\{  1,2,\ldots,j-r-1\}$ be
	fixed indices at the moment and consider formula (\ref{eq:first_half}). There are $\binom{j-r-1}{\ell}$ pairwise distinct
	strictly increasing sequences $r<k_{1}<k_{2}<\ldots<k_{\ell}<j$ of length
	$\ell$ between $r$ and $j$ that can also be stored in permanent lookup tables, since they are independent of the applied normalized B-basis. In case of each of these sequences one has to evaluate
	the fraction%
	\[
	f_{j,r,\ell}:=\frac{b_{n,r}^{\left(  k_{1}\right)  }\left(  \alpha\right)  b_{n,k_{1}%
		}^{\left(  k_{2}\right)  }\left(  \alpha\right)  b_{n,k_{2}}^{\left(
		k_{3}\right)  }\left(  \alpha\right)  \cdot\ldots\cdot b_{n,k_{\ell-1}%
	}^{\left(  k_{\ell}\right)  }\left(  \alpha\right)  b_{n,k_{\ell}}^{\left(
	j\right)  }\left(  \alpha\right)  }{b_{n,k_{1}}^{\left(  k_{1}\right)
}\left(  \alpha\right)  b_{n,k_{2}}^{\left(  k_{2}\right)  }\left(
\alpha\right)  \cdot\ldots\cdot b_{n,k_{\ell}}^{\left(  k_{\ell}\right)
}\left(  \alpha\right)  }%
\]
that includes $2\ell$ flops (i.e., $\ell$ multiplications in the nominator,
$\ell-1$ multiplications in the denominator and $1$ division). Thus, the total
number of flops required for the evaluation of the summation%
\begin{equation}
\label{eq:summation_cost_of_sequences_of_length_ell}
s_{j,r,\ell}:=\sum_{r<k_{1}<k_{2}<\ldots<k_{\ell}<j} f_{j,r,\ell}%
\end{equation}
equals%
\[
2\ell\cdot\binom{j-r-1}{\ell}+\left(  \binom{j-r-1}{\ell}-1\right)
\]
for each fixed values of $\ell=1,2,\ldots,r-1$, where the last term in the parentheses appears due to additions that have to
be performed in (\ref{eq:summation_cost_of_sequences_of_length_ell}). If one considers all possible values of $\ell$ and observes that
$\left(  -1\right)  ^{\ell}$ is just an alternating sign (implying either
addition or subtraction), the number of flops performed during the evaluation
of the expression%
\[
g_{j,r}:=\frac{b_{n,r}^{\left(  j\right)  }\left(  \alpha\right)  +\displaystyle\sum\limits
	_{\ell=1}^{j-r-1}\left(  -1\right)  ^{\ell}s_{j,r,\ell}}{b_{n,r}^{\left(
		r\right)  }\left(  \alpha\right)  }%
\]
is%
\begin{equation*}
\sum_{\ell=1}^{j-r-1}\left(  2\ell\cdot\binom{j-r-1}{\ell}+\binom
{j-r-1}{\ell}-1\right)  +\left[1+\left(  j-r-2\right)\right]  +1
=
~2^{j-r-1}\cdot\left(  j-r\right)
\end{equation*}
that consists of the evaluation cost of all $\{s_{j,r,\ell}\}_{\ell=1}^{j-r-1}$, of $1+\left(  j-r-2\right)=j-r-1$ additions and of $1$ division.

Observe that values $\left\{  g_{j,r}\right\}  _{r=1}^{j-1}$ are independent
of the row index $i$ for all fixed column indices  $j=1,2,\ldots,\left\lfloor\frac{n}{2}\right\rfloor$, i.e., they can be evaluated and stored in a temporary lookup table
by performing%
\[
\sum_{r=1}^{j-1}2^{j-r-1}\cdot\left(  j-r\right)  =2^{j-1}\cdot j-2^{j}+1
\]
flops and later they can be reused for the evaluation of all quantities%
\[
h_{i,j}:=\sum_{r=1}^{j-1}\varphi_{n,i}^{\left(  r\right)  }\left(
\alpha\right) \cdot  g_{j,r},~i=1,2,\ldots,n.%
\]
Thus, independently of $i$, the
calculation of $h_{i,j}$ takes an additional $2j-3$ flops (i.e., $j-1$ multiplication and
$j-2$ addition) for all fixed values of $j=1,2,\ldots,\left\lfloor \frac{n}%
{2}\right\rfloor $. Finally, each of the column entries%
\[
t_{i,j}^{n}=\varphi_{n,i}\left(  \alpha\right)  -\frac{h_{i,j}-\varphi
	_{n,i}^{\left(  j\right)  }\left(  \alpha\right)  }{b_{n,j}^{\left(  j\right)
	}\left(  \alpha\right)  },~i=1,2,\ldots,n
\]
can be evaluated by means of $3$ additional flops for all fixed values of $j=1,2,\ldots
,\left\lfloor \frac{n}{2}\right\rfloor $.

Thus, the total number of flops required for the evaluation of unknown entries
$\left[  t_{i,j}^{n}\right]  _{i=1,~j=1}^{n,~\left\lfloor \frac{n}%
	{2}\right\rfloor }$ of the general transformation matrix is%
\begin{equation}
\sum_{j=1}^{\left\lfloor \frac{n}{2}\right\rfloor }\left(  2^{j-1}\cdot
j-2^{j}+1\right)  +n\cdot\sum_{j=1}^{\left\lfloor \frac{n}{2}\right\rfloor }\left[
\left(  2j-3\right)  +3\right]  =2^{\left\lfloor \frac{n}{2}\right\rfloor
}\left(  \left\lfloor \frac{n}{2}\right\rfloor -3\right)  +\left\lfloor
\frac{n}{2}\right\rfloor +3+n\left\lfloor \frac{n}{2}\right\rfloor \left(
\left\lfloor \frac{n}{2}\right\rfloor +1\right)  .\label{eq:half_cost}%
\end{equation}

Since the structure of formula (\ref{eq:last_half}) is very similar to that of (\ref{eq:first_half}), one can conclude that for odd numbers $n$ the total computational cost of all unknown entries of the general transformation matrix
is twice of (\ref{eq:half_cost}), while for even values of $n$ the entries of the middle column do not have to be reevaluated by means of (\ref{eq:last_half}), i.e., in this latter case the partial computational cost (\ref{eq:half_cost}) has to be increased by
\[
\sum_{j=1}^{
	\frac{n}{2}
	-1}\left(  2^{j-1}\cdot
j-2^{j}+1\right)  +n\cdot\sum_{j=1}^{
	\frac{n}{2}
	-1}\left[
\left(  2j-3\right)  +3\right]  =2^{
	\frac{n}{2}
	-1
}\left(  
\frac{n}{2}
-4\right)  +
\frac{n}{2}
+2+
\frac{n^2}{2}\cdot
\left(
\frac{n}{2}
-1\right)  ,\label{eq:other_half_cost}
\]
which leads to the final expression (\ref{eq:total_computational_cost}).
\end{proof}%

\bigskip

\begin{proof}
	[Proof of Theorem \ref{thm:trigonometric_endpoint_derivatives}]In order to
	determine the higher order derivatives of normalized B-basis functions
	(\ref{eq:trigonometric_B-basis-n}) at the endpoints of the interval $\left[
	0,\beta\right]  $, we will make use of trigonometric identities%
	\begin{align*}
	\sin^{2r+1}\left(  \theta\right)   &  =\frac{2}{2^{2r+1}}\sum_{\ell=0}%
	^{r}\left(  -1\right)  ^{r-\ell}\binom{2r+1}{\ell}\sin\left(  \left(  2\left(
	r-\ell\right)  +1\right)  \theta\right)  ,\\
	\sin^{2r}\left(  \theta\right)   &  =\frac{1}{2^{2r}}\binom{2r}{r}+\frac
	{2}{2^{2r}}\sum_{\ell=0}^{r-1}\left(  -1\right)  ^{r-\ell}\binom{2r}{\ell}%
	\cos\left(  \left(  2\left(  r-\ell\right)  \right)  \theta\right)  ,
	\end{align*}
	where $r\in%
	\mathbb{N}
	$ and $\theta\in%
	\mathbb{R}
	$. E.g. if $i=2r+1$ ($r=0,1,\ldots,n-1$), then%
	\begin{align*}
	\frac{b_{2n,2r+1}\left(  u\right)  }{c_{2n,2r+1}^{\beta}}=  &  ~\sin^{2\left(
		n-r-1\right)  +1}\left(  \frac{\beta-u}{2}\right)  \sin^{2r+1}\left(  \frac
	{u}{2}\right) \\
	=  &  ~\left(  \frac{2}{2^{2\left(  n-r-1\right)  +1}}\sum_{k=0}%
	^{n-r-1}\left(  -1\right)  ^{n-r-1-k}\binom{2\left(  n-r-1\right)  +1}{k}%
	\sin\left(  \left(  2\left(  n-r-k\right)  -1\right)  \frac{\beta-u}%
	{2}\right)  \right) \\
	&  ~\cdot\left(  \frac{2}{2^{2r+1}}\sum_{\ell=0}^{r}\left(  -1\right)
	^{r-\ell}\binom{2r+1}{\ell}\sin\left(  \left(  2\left(  r-\ell\right)
	+1\right)  \frac{u}{2}\right)  \right) \\
	=  &  ~\frac{1}{2^{2\left(  n-1\right)  }}\sum_{k=0}^{n-r-1}\sum_{\ell=0}%
	^{r}\left(  -1\right)  ^{n+1-k-\ell}\binom{2\left(  n-r-1\right)  +1}{k}%
	\binom{2r+1}{\ell}\\
	&  ~\cdot\sin\left(  \left(  2\left(  n-r-k\right)  -1\right)  \frac{\beta
		-u}{2}\right)  \sin\left(  \left(  2\left(  r-\ell\right)  +1\right)  \frac
	{u}{2}\right) \\
	=  &  ~\frac{1}{2^{2n-1}}\sum_{k=0}^{n-r-1}\sum_{\ell=0}^{r}\left(  -1\right)
	^{n+1-k-\ell}\binom{2\left(  n-r-1\right)  +1}{k}\binom{2r+1}{\ell}\\
	&  ~\cdot\left(  \cos\left(  \left(  n-k-\ell\right)  u-\left(  2\left(
	n-r-k\right)  -1\right)  \frac{\beta}{2}\right)  \right. \\
	&  ~\left.  -\cos\left(  \left(  n-k-2r+\ell-1\right)  u-\left(  2\left(
	n-r-k\right)  -1\right)  \frac{\beta}{2}\right)  \right)  ,
	\end{align*}
	from which follows that%
	\begin{align*}
	\frac{b_{2n,2r+1}^{\left(  j\right)  }\left(  u\right)  }{c_{2n,2r+1}^{\beta}%
	}=  &  ~\frac{1}{2^{2n-1}}\sum_{k=0}^{n-r-1}\sum_{\ell=0}^{r}\left(
	-1\right)  ^{n+1-\left(  k+\ell\right)  }\binom{2\left(  n-r-1\right)  +1}%
	{k}\binom{2r+1}{\ell}\\
	&  ~\cdot\left(  \left(  n-\left(  k+\ell\right)  \right)  ^{j}\cos\left(
	\left(  n-\left(  k+\ell\right)  \right)  u-\left(  2\left(  n-r-k\right)
	-1\right)  \frac{\beta}{2}+\frac{j\pi}{2}\right)  \right. \\
	&  ~\left.  -\left(  n-k-2r+\ell-1\right)  ^{j}\cos\left(  \left(
	n-k-2r+\ell-1\right)  u-\left(  2\left(  n-r-k\right)  -1\right)  \frac{\beta
	}{2}+\frac{j\pi}{2}\right)  \right)
	\end{align*}
	for all $j\geq0$. Substituting $u=0$ into the last expression, one obtains
	exactly the formula (\ref{eq:trigonometric_B-basis_n_odd_0}). If $i=2r$
	($r=0,1,\ldots,n$), then one can proceed analogously.
\end{proof}

\bigskip

\begin{proof}
	[Proof of Theorem \ref{thm:hyperbolic_endpoint_derivatives}]In order to
	determine the higher order derivatives of the hyperbolic counterpart of the
	normalized B-basis functions (\ref{eq:trigonometric_B-basis-n}) (see also
	Subsection \ref{subsec:hyperbolic_polynomials}) at the endpoints of the
	interval $\left[  0,\beta\right]  $, one can follow the steps of the proof of
	Theorem \ref{thm:trigonometric_endpoint_derivatives} by applying the
	hyperbolic identities%
	\begin{align*}
	\sinh^{2r+1}\left(  \theta\right)   &  =\frac{2}{2^{2r+1}}\sum_{\ell=0}%
	^{r}\left(  -1\right)  ^{r+\left(  r\operatorname{mod}2\right)  -\ell}%
	\binom{2r+1}{\ell}\sinh\left(  \left(  2\left(  r-\ell\right)  +1\right)
	\theta\right)  ,\\
	\sinh^{2r}\left(  \theta\right)   &  =\frac{1}{2^{2r}}\binom{2r}{r}+\frac
	{2}{2^{2r}}\sum_{\ell=0}^{r+\left(  r\operatorname{mod}2\right)  -1}\left(
	-1\right)  ^{r+\left(  r\operatorname{mod}2\right)  -\ell}\binom{2r}{\ell
	}\cosh\left(  \left(  2\left(  r-\ell\right)  \right)  \theta\right)  
	\end{align*}
	and basic properties%
	\begin{align*}
	\sinh^{\left(  j\right)  }\left(  \omega u\right)   &  =\left\{
	\begin{array}
	[c]{cc}%
	\omega^{j}\sinh\left(  \omega u\right)  , & j\left(  \operatorname{mod}%
	2\right)  =0,\\
	\omega^{j}\cosh\left(  \omega u\right)  , & j\left(  \operatorname{mod}%
	2\right)  =1,
	\end{array}
	\right. \\
	\cosh^{\left(  j\right)  }\left(  \omega u\right)   &  =\left\{
	\begin{array}
	[c]{cc}%
	\omega^{j}\cosh\left(  \omega u\right)  , & j\left(  \operatorname{mod}%
	2\right)  =0,\\
	\omega^{j}\sinh\left(  \omega u\right)  , & j\left(  \operatorname{mod}%
	2\right)  =1,
	\end{array}
	\right. \\
	2\cosh\left(  \theta_{1}\right)  \cosh\left(  \theta_{2}\right)   &
	=\cosh\left(  \theta_{1}+\theta_{2}\right)  +\cosh\left(  \theta_{1}%
	-\theta_{2}\right)
	\end{align*}
	of the hyperbolic sine and cosine functions, where $r\in%
	\mathbb{N}
	$
	and $
	\theta, \theta_1,\theta_2,\omega\in%
	\mathbb{R}%
	$.
\end{proof}

\section{Final remarks\label{sec:final_remarks}}

As listed in Section \ref{sec:introduction}, concerning geometric modeling,
the normalized B-bases (of EC spaces that also comprise
the constant functions) ensure many optimal shape preserving properties and
algorithms. Moreover, they may also provide useful design or shape parameters that
can arbitrarily be specified by the user or the engineer. In Section
\ref{sec:examples}, we have seen that polynomial, trigonometric, hyperbolic or
mixed EC spaces allow us to obtain the control point based
exact description of many (rational) curves and surfaces that are important in
several areas of applied mathematics. The investigated large
classes of vector spaces also ensure the description of famous geometrical objects (like
ellipses; epi- and hypocycloids; Lissajous curves; torus knots; foliums; rose
curves; the witch of Agnesi; the cissoid of Diocles; Bernoulli's lemniscate;
Zhukovsky airfoil profiles; cycloids; hyperbolas; helices; catenaries; Archimedean and logarithmic spirals;
ellipsoids; tori; hyperboloids; catenoids; helicoids; ring, horn and spindle
Dupin cyclides; non-orientable surfaces such as Boy's and Steiner's surfaces
and the Klein Bottle of Gray).

EC bases of type (\ref{eq:ordinary_basis}) represent a
large family of vector spaces that can be used in real-world applications,
e.g.\ besides of examples described in Section \ref{sec:examples}, general formulas of Theorem
\ref{thm:basis_transformation} can also be applied in the exponential space %
$
\left\langle \left\{  1,e^{\lambda_{1}u},e^{\lambda_{2}u},\ldots
,e^{\lambda_{n}u}:u\in\left[  \alpha,\beta\right]  \right\}  \right\rangle
,~0<\lambda_{1}<\lambda_{2}<\ldots<\lambda_{n},~\alpha<\beta,
$
or in the space %
$
\left\langle \left\{  1,u^{\lambda_{1}},u^{\lambda_{2}},\ldots,u^{\lambda_{n}%
}:u\in\left[  \alpha,\beta\right]  \right\}  \right\rangle ,~0<\lambda_{1}%
<\lambda_{2}<\ldots<\lambda_{n},~\left[\alpha,\beta\right]\subset\left(0,\infty\right)
$
of restricted M\"{u}ntz polynomials among many others.

Storing in permanent lookup tables the zeroth and higher order endpoint derivatives of
the ordinary EC basis (\ref{eq:ordinary_basis}) and
of the normalized B-basis (\ref{eq:B-basis}) induced by it, general formulas (\ref{eq:first_half}%
)--(\ref{eq:last_half}) and the proposed control point based curve/surface
modeling tools can efficiently be implemented up to $n=15$, even by means of a sequential algorithm. If one uses multi-threading, the value of $n$, for which one can provide an efficient implementation, can be higher. For arbitrarily large values of $n$, the presented results are mainly of theoretical interest.


\appendix

\renewcommand*{\thesection}{\Alph{section}}

\section{Mapping Bernstein polynomials to the ordinary power basis: revisited}
\label{sec:Bernstein_to_monomials}
\normalsize
In what follows, we show through direct calculations that the substitution of derivatives
(\ref{eq:monomial_derivative})--(\ref{eq:Bernstein_derivative}) into formulas (\ref{eq:first_half})--(\ref{eq:last_half}) leads to the classical transformation matrix (\ref{eq:Bernstein_to_monomials}) that maps the Bernstein polynomials of degree $n$ to the ordinary monomials. In order to perform these direct calculations, we will use the following two lemmas. (Naturally, by using simple algebraic manipulations, one can obtain this basis transformation in a much easier way. However, we thought it would be interesting to revisit this property in the general context of Theorem \ref{thm:basis_transformation}.)

\begin{lemma}
	[{Sum of multinomial coefficients over proper partitions; \cite[Theorem 2.2]{KaoZetterberg1957}}]For all $p\geq q$ we have that%
	\begin{equation}
	\sum_{s_{q,p}\in S_{q,p}}\frac{p!}{\prod_{z=1}^{q}k_{z}!}=\sum_{\gamma
		=0}^{q-1}\left(  -1\right)  ^{\gamma}\binom{q}{\gamma}\left(  q-\gamma\right)
	^{p}, \label{eq:sum_of_mulitnomial_coefficients}%
	\end{equation}
	where $s_{q,p}$ is a proper $q$-fold partition of $p$, i.e., $s_{q,p}$ is an
	ordered set of non-negative integers $\left\{  k_{z}\right\}  _{z=1}^{q}$ such
	that %
	$
	\sum_{z=1}^{q}k_{z}=p
	$
	and %
	$
	k_{\ell}\geq1,~\forall\ell=1,2,\ldots,q.
	$
	($S_{q,p}$ denotes the set of all proper $q$-fold partitions of $p$.)
\end{lemma}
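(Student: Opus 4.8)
The plan is to derive (\ref{eq:sum_of_mulitnomial_coefficients}) from the multinomial theorem by a single application of inclusion--exclusion. The reason the identity holds is that both sides count the surjections from a $p$-element set onto a $q$-element set: the left-hand side groups these surjections according to the ordered fibre sizes $(k_1,\dots,k_q)$, so that a fixed composition contributes the multinomial coefficient $p!/(k_1!\cdots k_q!)$, while the right-hand side is the classical sieve formula for the same count. I will make the sieve explicit. Throughout I assume $q\ge 1$, so that $p\ge q$ forces $p\ge 1$; this is the only place the hypothesis is used.

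First I would record the unrestricted multinomial identity
\[
\sum_{\substack{k_1+\cdots+k_q=p\\ k_1,\dots,k_q\ge 0}}\frac{p!}{k_1!\cdots k_q!}=q^{p},
\]
and read the left-hand side of (\ref{eq:sum_of_mulitnomial_coefficients}) as the subsum over those tuples in which \emph{no} coordinate vanishes. For $A\subseteq\{1,\dots,q\}$ let $Z_A$ denote the set of non-negative integer tuples $(k_1,\dots,k_q)$ with $\sum_{z}k_z=p$ and $k_z=0$ for every $z\in A$. Summing the weight $p!/(k_1!\cdots k_q!)$ over $Z_A$, each coordinate indexed by $A$ contributes the factor $1/0!=1$ while the $q-|A|$ free coordinates range over all non-negative integers with sum $p$, so by the multinomial theorem this subtotal equals $(q-|A|)^{p}$. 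Inclusion--exclusion over the index sets $A$ on which a coordinate is forced to be $0$ then yields
\begin{align*}
\sum_{s_{q,p}\in S_{q,p}}\frac{p!}{\prod_{z=1}^{q}k_{z}!}
&=\sum_{A\subseteq\{1,\dots,q\}}(-1)^{|A|}(q-|A|)^{p}\\
&=\sum_{\gamma=0}^{q}(-1)^{\gamma}\binom{q}{\gamma}(q-\gamma)^{p},
\end{align*}
where in the last step the $\binom{q}{\gamma}$ sets $A$ with $|A|=\gamma$ have been collected.

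Finally I would observe that the $\gamma=q$ summand is $(-1)^{q}\binom{q}{q}\,0^{p}=0$ because $p\ge 1$, so the sum truncates to $\gamma=0,1,\dots,q-1$, which is exactly (\ref{eq:sum_of_mulitnomial_coefficients}). There is no genuine obstacle here beyond keeping the bookkeeping straight; the only step that really needs a hypothesis is this truncation, which requires $p\ge 1$. As alternative routes one could extract the coefficient of $x^{p}/p!$ from the exponential generating function identity $(e^{x}-1)^{q}=\sum_{\gamma=0}^{q}(-1)^{\gamma}\binom{q}{\gamma}e^{(q-\gamma)x}$, or recognise the right-hand side as the $q$-th forward difference $\Delta^{q}\!\left[x^{p}\right]_{x=0}$ of the monomial $x^{p}$; but the inclusion--exclusion argument above is self-contained and shortest, and one may in any case simply cite \cite{KaoZetterberg1957}.
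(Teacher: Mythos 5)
Your argument is correct. Note, though, that the paper never proves this lemma at all: it is imported verbatim as \cite[Theorem 2.2]{KaoZetterberg1957} and used as a black box in Appendix \ref{sec:Bernstein_to_monomials}, so there is no in-paper proof to compare against; what you have supplied is the standard self-contained derivation. Every step checks out: reading the left-hand side of (\ref{eq:sum_of_mulitnomial_coefficients}) as the zero-free part of the unrestricted multinomial identity $\sum p!/(k_1!\cdots k_q!)=q^p$, the weight of your set $Z_A$ is indeed $(q-|A|)^p$ because each coordinate in $A$ contributes the factor $1/0!=1$, the inclusion--exclusion over $A$ isolates exactly the tuples with all $k_z\ge 1$ (equivalently, both sides count surjections from a $p$-set onto a $q$-set), and the $\gamma=q$ term $(-1)^q 0^p$ vanishes precisely because $p\ge q\ge 1$, which is, as you say, the only place the hypothesis enters (in fact $p\ge 1$ already suffices, since for $p<q$ both sides are zero). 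Your aside about the exponential generating function $(e^x-1)^q=\sum_{\gamma=0}^{q}(-1)^{\gamma}\binom{q}{\gamma}e^{(q-\gamma)x}$ is worth keeping: that is exactly the form in which the identity is exploited downstream, where the paper's appendix reassembles the alternating sum into derivatives of $(e^u-1)^{\ell+1}$ at $u=0$; so your ``alternative route'' is the one most continuous with the paper's computation, while your inclusion--exclusion argument has the advantage of being purely combinatorial and of making the role of the hypothesis transparent.
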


\begin{lemma}
	For all orders $p\geq1$, one has that%
	\begin{equation}
	\left.  \frac{\text{\emph{d}}^{p}}{\text{\emph{d}}u^{p}}\frac{\left(
		1-e^{u}\right)  ^{p+1}-\left(  1-e^{u}\right)  ^{2}}{e^{u}}\right\vert
	_{u=0}=\left\{
	\begin{array}
	[c]{rl}%
	-2, & p\left(  \operatorname{mod}2\right)  =0,\\
	0, & p\left(  \operatorname{mod}2\right)  =1
	\end{array}
	\right.  =\frac{1}{\left(  -1\right)  ^{p+1}}-1\text{.}
	\label{eq:exp_derivative}%
	\end{equation}
	
\end{lemma}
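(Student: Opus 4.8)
The plan is to use linearity of the functional $h\mapsto h^{(p)}(0)$ and handle the two summands of the numerator separately. Write $f(u):=(1-e^u)^{p+1}/e^u$ and $g(u):=(1-e^u)^2/e^u$, so that the quantity to be computed is $f^{(p)}(0)-g^{(p)}(0)$.

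For the first piece I would argue by order of vanishing at the origin: since $1-e^u=-\bigl(u+\tfrac{u^2}{2}+\cdots\bigr)$ has a simple zero at $u=0$, the power $(1-e^u)^{p+1}$ vanishes there to order exactly $p+1$, and dividing by the analytic unit $e^u$ (which is nonzero at $0$) does not change this. Hence $f$ vanishes to order $p+1>p$ at $u=0$, so $f^{(p)}(0)=p!\,[u^p]f(u)=0$. Equivalently, expanding $(1-e^u)^{p+1}=\sum_{k=0}^{p+1}\binom{p+1}{k}(-1)^k e^{ku}$ and multiplying by $e^{-u}$ gives $f^{(p)}(0)=\sum_{k=0}^{p+1}\binom{p+1}{k}(-1)^k(k-1)^p$, i.e.\ an alternating binomial sum, taken over $p+2$ consecutive integer arguments, of the polynomial $k\mapsto(k-1)^p$ of degree $p$; such a sum is the $(p+1)$-st finite difference of a polynomial of degree $<p+1$ and therefore vanishes (a standard fact; it is also the degenerate case $q=p+1>p$ of the preceding lemma, for which the index set of proper partitions is empty).

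For the second piece the computation is immediate: $(1-e^u)^2/e^u=(1-2e^u+e^{2u})/e^u=e^{-u}-2+e^u$, so for every $p\geq1$ one has $g^{(p)}(u)=(-1)^p e^{-u}+e^u$ and thus $g^{(p)}(0)=(-1)^p+1$.

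Combining the two, $f^{(p)}(0)-g^{(p)}(0)=-\bigl(1+(-1)^p\bigr)$, which is $-2$ for even $p$ and $0$ for odd $p$; and since $-\bigl(1+(-1)^p\bigr)=(-1)^{p+1}-1=\tfrac{1}{(-1)^{p+1}}-1$, this is exactly the claimed identity. There is no genuine obstacle in the argument; the only point that deserves a line of justification is the vanishing of the leading contribution $f^{(p)}(0)$, which is settled by the order-of-vanishing observation (or, alternatively, by the binomial-sum remark above).
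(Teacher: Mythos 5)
Your proof is correct, and since the paper states this lemma without giving any proof of its own, there is nothing to clash with: your argument supplies exactly the missing verification. The decomposition into $f(u)=(1-e^u)^{p+1}e^{-u}$ and $g(u)=(1-e^u)^2e^{-u}$ is the natural one; the observation that $f$ vanishes to order $p+1>p$ at the origin (because $1-e^u$ has a simple zero and $e^{-u}$ is a unit), hence $f^{(p)}(0)=0$, together with the elementary simplification $g(u)=e^{-u}-2+e^u$ giving $g^{(p)}(0)=(-1)^p+1$ for $p\geq 1$, yields $-\bigl(1+(-1)^p\bigr)=\frac{1}{(-1)^{p+1}}-1$ as claimed. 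Your alternative justification of $f^{(p)}(0)=0$ via the alternating binomial sum $\sum_{k=0}^{p+1}\binom{p+1}{k}(-1)^k(k-1)^p$, i.e.\ the $(p+1)$-st finite difference of a polynomial of degree $p$, is also valid and fits nicely with the spirit of the appendix, where the companion Kao--Zetterberg identity is likewise a finite-difference statement; the parenthetical remark about the degenerate case $q=p+1$ is harmless but not needed for the argument.
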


From hereon, a number in paranthesis above the equality sign indicates that we
apply the corresponding identity. For example, when $2\leq j\leq\left\lfloor
\frac{n}{2}\right\rfloor $ and $i=1,2,\ldots,j-1$, one has that%
\begin{footnotesize}
\begin{align*}
\def\arraystretch{1.75}
t_{i,j}^{n}
\underset{\text{(\ref{eq:monomial_derivative})}}{\overset{\text{(\ref{eq:first_half})}}{=}} &  ~-\frac{1}{b_{n,j}^{\left(
		j\right)  }\left(  0\right)  }\frac{\varphi_{n,i}^{\left(  i\right)  }\left(
	0\right)  }{b_{n,i}^{\left(  i\right)  }\left(  0\right)  }\left(
b_{n,i}^{\left(  j\right)  }\left(  0\right)  +\sum_{\ell=1}^{j-i-1}\left(
-1\right)  ^{\ell}\sum_{i<k_{1}<k_{2}<\ldots<k_{\ell}<j}\frac{b_{n,i}^{\left(  k_{1}\right)  }\left(  0\right)
	b_{n,k_{1}}^{\left(  k_{2}\right)  }\left(  0\right)  b_{n,k_{2}}^{\left(
		k_{3}\right)  }\left(  0\right)  \ldots b_{n,k_{\ell-1}}^{\left(  k_{\ell
		}\right)  }\left(  0\right)  b_{n,k_{\ell}}^{\left(  j\right)  }\left(
	0\right)  }{b_{n,k_{1}}^{\left(  k_{1}\right)  }\left(  0\right)  b_{n,k_{2}%
	}^{\left(  k_{2}\right)  }\left(  0\right)  \ldots b_{n,k_{\ell}}^{\left(
	k_{\ell}\right)  }\left(  0\right)  }\right)  \\
\underset{\text{(\ref{eq:Bernstein_derivative})}}{\overset
	{\text{(\ref{eq:monomial_derivative})}}{=}} &  -\frac{1}{j!\binom{n}{j}}\frac
{i!}{i!\binom{n}{i}}\left(  \left(  -1\right)  ^{j-i}j!\binom{n}{j}\binom
{j}{i}
+\left(  -1\right)  ^{j-i}j!\binom{n}{j}\sum_{\ell=1}%
^{j-i-1}\left(  -1\right)  ^{\ell}\sum_{i<k_{1}<k_{2}<\ldots<k_{\ell}<j}%
\binom{k_{1}}{i}\binom{k_{2}}{k_{1}}\ldots\binom{k_{\ell}}{k_{\ell-1}}%
\binom{j}{k_{\ell}}\right)  \\
= &  ~\frac{\left(  -1\right)  ^{j-i+1}}{\binom{n}{i}}\left(  \binom{j}%
{i}+\sum_{\ell=1}^{j-i-1}\left(  -1\right)  ^{\ell}\sum_{i<k_{1}<k_{2}%
	<\ldots<k_{\ell}<j}\binom{k_{1}}{i}\binom{k_{2}}{k_{1}}\ldots\binom{k_{\ell}%
}{k_{\ell-1}}\binom{j}{k_{\ell}}\right)  \\
= &  ~\frac{\left(  -1\right)  ^{j-i+1}}{\binom{n}{i}}\left(  \binom{j}%
{i}+\sum_{\ell=1}^{j-i-1}\left(  -1\right)  ^{\ell}\sum_{i<k_{1}<k_{2}%
	<\ldots<k_{\ell}<j}\frac{j!}{i!\left(  k_{1}-i\right)  !\left(  k_{2}%
	-k_{1}\right)  !\ldots\left(  j-k_{\ell}\right)  !}\right)  \\
= &  ~\frac{\left(  -1\right)  ^{j-i+1}}{\binom{n}{i}}\binom{j}{i}\left(
1+\sum_{\ell=1}^{j-i-1}\left(  -1\right)  ^{\ell}\sum_{i<k_{1}<k_{2}%
	<\ldots<k_{\ell}<j}\frac{\left(  j-i\right)  !}{\left(  k_{1}-i\right)
	!\left(  k_{2}-k_{1}\right)  !\ldots\left(  j-k_{\ell}\right)  !}\right)  \\
\underset{q=\ell+1,~p=j-i}{\overset
	{\text{(\ref{eq:sum_of_mulitnomial_coefficients})}}{=}} &  ~\frac{\left(
	-1\right)  ^{j-i+1}}{\binom{n}{i}}\binom{j}{i}\left(  1+\sum_{\ell=1}%
^{j-i-1}\left(  -1\right)  ^{\ell}\sum_{\gamma=0}^{\ell}\left(  -1\right)
^{\gamma}\binom{\ell+1}{\gamma}\left(  \ell+1-\gamma\right)  ^{j-i}\right)  \\
= &  ~\frac{\left(  -1\right)  ^{j-i+1}}{\binom{n}{i}}\binom{j}{i}\left(
1+\sum_{\ell=1}^{j-i-1}\left(  -1\right)  ^{\ell}\sum_{\gamma=0}^{\ell}\left(
-1\right)  ^{\gamma}\binom{\ell+1}{\gamma}\left.  \frac{\text{d}^{j-i}%
}{\text{d}u^{j-i}}e^{\left(  \ell+1-\gamma\right)  u}\right\vert
_{u=0}\right)  \\
= &  ~\frac{\left(  -1\right)  ^{j-i+1}}{\binom{n}{i}}\binom{j}{i}\left(
1+\frac{\text{d}^{j-i}%
}{\text{d}u^{j-i}}\left(\sum_{\ell=1}^{j-i-1}\left(  -1\right)  ^{\ell}\left.  \sum_{\gamma=0}^{\ell}\left(  -1\right)  ^{\gamma}%
\binom{\ell+1}{\gamma}e^{\left(  \ell+1-\gamma\right)  u}\right)\right\vert
_{u=0}\right)  \\
= &  ~\frac{\left(  -1\right)  ^{j-i+1}}{\binom{n}{i}}\binom{j}{i}\left(
1+\left.  \frac{\text{d}^{j-i}}{\text{d}u^{j-i}}\left(\sum_{\ell=1}^{j-i-1}\left(
-1\right)  ^{\ell}\left(  e^{u}-1\right)  ^{\ell+1}\right)\right\vert _{u=0}\right)
\\
= &  ~\frac{\left(  -1\right)  ^{j-i+1}}{\binom{n}{i}}\binom{j}{i}\left(
1+\left.  \frac{\text{d}^{j-i}}{\text{d}u^{j-i}}\left(  \left(  e^{u}%
-1\right)  \sum_{\ell=1}^{j-i-1}\left(  1-e^{u}\right)  ^{\ell}\right)
\right\vert _{u=0}\right)  \\
= &  ~\frac{\left(  -1\right)  ^{j-i+1}}{\binom{n}{i}}\binom{j}{i}\left(
1+\left.  \frac{\text{d}^{j-i}}{\text{d}u^{j-i}}\left(  \left(  e^{u}%
-1\right)  \frac{\left(  1-e^{u}\right)  -\left(  1-e^{u}\right)  ^{j-i}%
}{1-\left(  1-e^{u}\right)  }\right)  \right\vert _{u=0}\right)  \\
= &  ~\frac{\left(  -1\right)  ^{j-i+1}}{\binom{n}{i}}\binom{j}{i}\left(
1+\left.  \frac{\text{d}^{j-i}}{\text{d}u^{j-i}}\frac{\left(  1-e^{u}\right)
	^{j-i+1}-\left(  1-e^{u}\right)  ^{2}}{e^{u}}\right\vert _{u=0}\right)  \\
\underset{p=j-i}{\overset{\text{(\ref{eq:exp_derivative})}}{=}} &
~\frac{\left(  -1\right)  ^{j-i+1}}{\binom{n}{i}}\binom{j}{i}\left(
1+\frac{1}{\left(  -1\right)  ^{j-i+1}}-1\right)  \\
= &  ~\frac{\binom{j}{i}}{\binom{n}{i}}.
\def\arraystretch{1.0}
\end{align*}
\end{footnotesize}

The remaining cases can be handled in a similar way.

\section{A simple Matlab example}
\label{sec:implementation_details}

In order to ease the reviewing process, Listing \ref{lst:ExponentialTrigonometricSpace} provides implementation details, by means of which one can verify the numerical values listed in Example \ref{exmp:exponential_trigonometric_space}. Note that, by using effective multi-threaded object oriented C++ programming and OpenGL rendering techniques,  even general cases can similarly be treated. With this illustrative Matlab code we opted for simplicity. Listing \ref{lst:ExponentialTrigonometricSpace} is based also on Doolittle's $LU$ decomposition which is implemented in Listing \ref{lst:Doolittle} for the sake of convenience.

\mnumberedcode{1}{Doolittle.m}{lst:Doolittle}{$\mathbf{LU}$ factorization of a regular real square matrix by means of Doolittle's algorithm}

\mnumberedcode{1}{ExponentialTrigonometricSpace_v3.m}{lst:ExponentialTrigonometricSpace}{Transforming the exponential trigonometric normalized B-basis (\ref{eq:5th_order_exp_trig_normalized_B_basis}) to the ordinary basis (\ref{eq:5th_order_exp_trig_ordinary_basis})}

\begin{flushleft}
	\includegraphics[scale = 1.0]{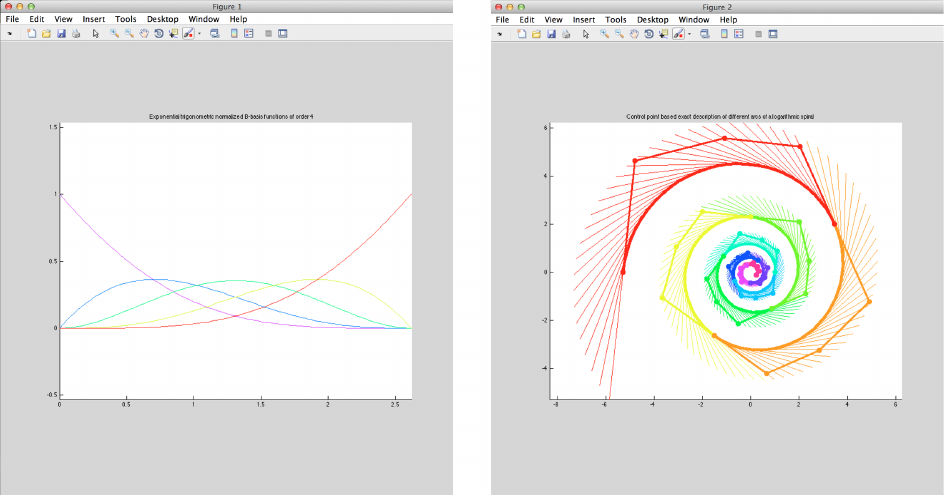}
\end{flushleft}

\end{document}